\documentclass[a4paper,12pt,intlimits,oneside]{amsart}
\usepackage{amsmath}
\usepackage{amsthm}
\usepackage{latexsym}
\usepackage{amssymb}
\usepackage{xcolor}
\usepackage{graphicx}
\usepackage[colorlinks=true]{hyperref}
\numberwithin{figure}{section}
\def\R{{\mathbb R}}
\def\C{{\mathbb C}}

\def\T{{\mathbb T}}
\def\Z{{\mathbb Z}}

\def\N{{\mathbb N}}

\def\e{\varepsilon}

\def\build#1_#2^#3{\mathrel{
\mathop{\kern 0pt#1}\limits_{#2}^{#3}}}
\def\td_#1,#2{\mathrel{\mathop{\build\longrightarrow_{#1\rightarrow #2}^{}}}}
\DeclareFontFamily{U}{MnSymbolC}{}
\DeclareSymbolFont{MnSyC}{U}{MnSymbolC}{m}{n}
\DeclareFontShape{U}{MnSymbolC}{m}{n}{
    <-6>  MnSymbolC5
   <6-7>  MnSymbolC6
   <7-8>  MnSymbolC7
   <8-9>  MnSymbolC8
   <9-10> MnSymbolC9
  <10-12> MnSymbolC10
  <12->   MnSymbolC12}{}
\DeclareMathSymbol{\intprod}{\mathbin}{MnSyC}{'270}
\newtheorem{theorem}{Theorem}
\newtheorem{corollary}{Corollary}
\newtheorem{proposition}{Proposition}
\newtheorem{lemma}{Lemma}
\newtheorem{remark}{Remark}
\newtheorem{definition}{Definition}
\begin{document}
\title[Flow map of the BO equation]{On the flow map of the Benjamin-Ono equation on the torus}
\author[P. G\'erard]{Patrick G\'erard}
\address{Laboratoire de Math\'ematiques d'Orsay, Univ. Paris-Sud, CNRS, Universit\'e Paris--Saclay, 91405 Orsay, France} \email{{\tt patrick.gerard@math.u-psud.fr}}
\author[T. Kappeler]{Thomas Kappeler}
\address{Institut f\"ur Mathematik, Universit\"at Z\"urich, Winterthurerstrasse 190, 8057 Zurich, Switzerland} 
\email{{\tt thomas.kappeler@math.uzh.ch}}
\author[P. Topalov]{Peter Topalov}
\address{Department of Mathematics, Northeastern University,
567 LA (Lake Hall), Boston, MA 0215, USA}
\email{{\tt p.topalov@northeatsern.edu}}

\subjclass[2010]{ 37K15 primary, 47B35 secondary}

\date{December 5,  2019}

\begin{abstract}
We prove that for any $0 < s < 1/2$,  
the Benjamin--Ono equation on the torus
is globally in time $C^0-$well-posed on 
the Sobolev space $H^{-s}(\T, \R)$, in the sense that the solution map, 
which is known to be defined for smooth data, continuously extends 
to $H^{-s}(\T, \R)$. The solution map does not extend continuously
to $H^{-s}(\T, \R)$ with $s > 1/2$. 
Hence the critical Sobolev exponent $s_c=-1/2$
of the Benjamin--Ono equation is the threshold 
for well-posedness on the torus. 
The obtained solutions are almost periodic in time. 
Furthermore, we prove that  the traveling wave solutions of the Benjamin--Ono equation 
on the torus are orbitally stable in $H^{-s}(\T ,\R)$ for any $0 \le s < 1/2$.
\end{abstract}

\keywords{Benjamin--Ono equation, well-posedness,
critical Sobolev exponent, almost periodicity of solutions, 
orbital stability of traveling waves}

\thanks{We would like to warmly thank J.C. Saut for
very valuable discussions and for
making us aware of many references, in particular
\cite{AN}. We also thank T. Oh for bringing reference 
\cite{AH} to our attention. 
T.K. partially supported by the Swiss National Science Foundation.
P.T. partially supported by the Simons Foundation, Award \#526907.}

\maketitle

\tableofcontents

\medskip

\section{Introduction}\label{Introduction}

In this paper we consider the Benjamin-Ono (BO) equation on the torus,
\begin{equation}\label{BO}
\partial_t v = H\partial^2_x v - \partial_x (v^2)\,, \qquad x \in \T:= \R/2\pi\Z\,, \,\,\, t \in \R,
\end{equation}
where $v\equiv v(t, x)$ is real valued and $H$ denotes the Hilbert transform, defined for $f = \sum_{n \in \mathbb Z} \widehat f(n) e^{ i n x} $,
$\widehat f(n) = \frac{1}{2\pi}\int_0^{2\pi} f(x) e^{-  i n x} dx$,  by
$$
H f(x) := \sum_{n \in \Z} -i \ \text{sign}(n) \widehat f(n) \ e^{inx}
$$
with $\text{sign}(\pm n):= \pm 1$ for any $n \ge 1$, whereas $\text{sign}(0) := 0$. This pseudo-differential equation ($\Psi$DE)
in one space dimension has been introduced by Benjamin \cite{Benj} and Ono \cite{Ono} to model long, uni-directional internal gravity waves 
in a two-layer fluid. It has been extensively studied, 
both on the real line $\R$ and on the torus $\T$. 
For an excellent survey, including the derivation of  \eqref{BO}, we refer to the recent article by Saut \cite{Sa}.

Our aim is to study  low regularity solutions of the $BO$ equation on $\T$. To state our results, we first need to review
some classical results on the well-posedness problem of \eqref{BO}.
Based on work of Saut \cite{Sa0}, 
Abelouhab, Bona, Felland, and Saut
proved in \cite{ABFS} that
for any $s \ge 3/2$, equation \eqref{BO} is globally in time 
well-posed on the Sobolev space $H^s_r \equiv H^s(\T, \R) $ 
(endowed with the standard norm $\| \cdot \|_s$, defined by \eqref{Hs norm} below), meaning the following:
\begin{itemize}
\item[(S1)]
{\em Existence and uniqueness of classical solutions:} For any initial data $v_0 \in H^{s}_r$, there exists 
a unique curve $v : \R \to H^s_r$ in
$C(\R, H^s_r) \cap C^1(\R, H^{s-2}_r)$ so that
$v(0) = v_0$ and for any $t \in \R$, equation \eqref{BO} is satisfied in $H^{s-2}_r$.   
(Since $H^s_r$ is an algebra, one has 
$\partial_xv(t)^2 \in H^{s-1}$ for any time $t \in \R$.)
\item[(S2)]{\em Continuity of solution map:}
The solution map 
$\mathcal S : H^s_r \to C(\mathbb R, H^s_r)$
is continuous, meaning that for any $v_0 \in H^s_r,$
$T > 0$, and $\varepsilon > 0$ there exists $\delta > 0,$ 
so that for any $w_0 \in H^s_r$ with $\|w_0 - v_0 \|_s < \delta$,
the solutions $w(t) = \mathcal S(t, w_0)$ and 
$v(t)= \mathcal S(t, v_0)$ of \eqref{BO}
with initial data $w(0) = w_0$ and, respectively, $v(0) = v_0$
satisfy $\sup_{|t| \le T} \| w(t) - v(t) \|_s \le  \varepsilon$.
\end{itemize}
In a straightforward way one verifies that
\begin{equation}\label{CL}
 \mathcal H^{(-1)}(v) := \langle v | 1 \rangle  \, , \qquad
 \mathcal H^{(0)}(v) :=  \frac{1}{2} \langle v | v \rangle  
\end{equation}
are integrals of the above solutions of \eqref{BO},
referred to as classical solutions, 
where $\langle \cdot  \, |  \, \cdot \rangle $ denotes the $L^2-$inner product, 
\begin{equation}\label{L2 inner product}
\langle f | g \rangle =  \frac{1}{2\pi} \int_0^{2\pi} f \overline g dx \, .
\end{equation}
In particular it follows that for any $c \in \R$ 
and any $s \ge 3/2$,
the affine space $H^s_{r,c}$ is left invariant by $\mathcal S$
where for any $\sigma \in \R$
\begin{equation}\label{affine spaces}
H^\sigma_{r,c} :=  
\{ w \in H^\sigma_r \, : \, \langle w | 1 \rangle =c \}\, .
\end{equation}

\noindent
In the sequel, further progress has been made 
on the well-posedness of \eqref{BO} 
on Sobolev spaces of low regularity. 
The best results so far in this direction
were obtained by Molinet  by using the gauge transformation introduced by Tao \cite{Tao}. 
Molinet's results in \cite{Mol} ( cf. also \cite{MP}) imply that the solution 
map $\mathcal S,$ introduced in $(S2)$ above, 
continuously extends to any
Sobolev space $H^s_r$ with $0 \le s \le 3/2$. 
More precisely, for any such $s$,
$\mathcal S: H^s_r \to C(\R, H^s_r)$ is continuous
and for any $v_0 \in H^s_r$, $ \mathcal S(t, v_0)$ satisfies equation \eqref{BO}  
in $H^{s-2}_r$. The fact that $\mathcal S$ continuously
extends to $L^2_{r} \equiv H^0_{r}$, 
$\mathcal S : L^2_{r} \to C(\R, L^2_{r})$,
 can also be deduced by methods
recently developed in \cite{GK}. Furthermore, one infers from \cite{GK}
that any solution  $ \mathcal S(t, v_0)$ with initial data $v_0 \in L^2_r$
can be approximated in $C(\R, L^2_r)$ by  solutions of \eqref{BO} which are rational functions of $\cos x, \sin x$.
In this paper we will refer to these solutions as rational solutions.

In this paper we show that the BO equation is well-posed in the Sobolev space
$H^{-s}_r$ for any $0 < s < 1/2$ and that this result is sharp. i.e.,
that the critical Sobolev exponent $s_c=-1/2$ is the threshold for well-posedness. 
Since the nonlinear term $\partial_x v^2$ in equation \eqref{BO} is not 
well-defined for elements in $H^{-s}_r$, we first need to define what
we mean by a solution of \eqref{BO} in such a space.

\begin{definition}\label{def solution}
Let $s \ge 0$.
A continuous curve $\gamma: \R \to H^{-s}_r$ 
with $\gamma(0) = v_0$ for a given $v_0 \in H^{-s}_r$,
is called a global in time solution of the BO equation in 
$H^{-s}_r$ with initial data $v_0$ if for any
sequence $(v_0^{(k)})_{k \ge 1}$ in $H^\sigma_r$ with $\sigma > 3/2,$
which converges to $v_0$ in $H^{-s}_r$, the corresponding
sequence of classical solutions $\mathcal S(\cdot, v_0^{(k)})$
converges to $\gamma$ in $C(\R, H^{-s}_{r})$.
The solution $\gamma$ is denoted by 
$\mathcal S(\cdot, v_0)$.
\end{definition}
\noindent
We remark that for any $v_0 \in L^2_r$, the solution
$\mathcal S(\cdot, v_0)$ in the sense of Definition \ref{def solution}
coincides with the solution obtained by Molinet in \cite{Mol}.
\begin{definition}\label{def well-posedness}
Let $s \ge 0$.
Equation \eqref{BO} is said to be globally $C^0-$well-posed
in $H^{-s}_r$ if the following holds:
\begin{itemize}
\item[(i)]
For any $v_0 \in H^{-s}_r$, there exists
a global in time solution of \eqref{BO} 
with initial data $v_0$
in the sense of Definition \ref{def solution}.
\item[(ii)]
The solution map $\mathcal S : H^{-s}_r \to C(\R, H^{-s}_r)$
is continuous, i.e. satisfies (S2).
\end{itemize}
\end{definition}
 Our main results are the following ones:
\begin{theorem}\label{Theorem 1}
For any $0 \le s < 1/2$,  the Benjamin-Ono equation is globally $C^0-$well-posed
on $H^{-s}_r$ in the sense of Definition \ref{def well-posedness}.  For any $c \in \R$,  $t\in \R $,  the flow map $\mathcal S^t=S(t, \cdot)$  leaves
the affine space $H^{-s}_{r,c}$,  introduced in \eqref{affine spaces}, invariant.\\
Furthermore, there exists an integral 
$I_s : H^{-s}_r \to \R_{\ge 0}$ of \eqref{BO} 
satisfying
$$
\| v \|_{-s} \le I_s(v)\, , \quad \forall v \in H^{-s}_r \, .
$$
In particular, one has
$$
\sup_{t \in \R} \|\mathcal S(t, v_0)\|_{-s}
\le I_s(v_0)\ , \quad \forall v_0 \in H^{-s}_r\, .
$$
\end{theorem}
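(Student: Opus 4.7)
The plan is to exploit the complete integrability of BO, specifically the Birkhoff-type coordinates constructed in \cite{GK} for $L^2_{r,c}$, and to extend this framework to $H^{-s}_{r,c}$ for $0 < s < 1/2$. These coordinates conjugate the BO flow to a trivial linear flow $\zeta_n(t) = e^{-i\omega_n t}\zeta_n(0)$ on a weighted sequence space; once they are extended to $H^{-s}_{r,c}$ and shown to be bi-continuous there, all claims in the theorem follow in a conceptually transparent way.

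The first step is to construct the conserved quantity $I_s$. Using the Birkhoff coordinates $(\zeta_n)_{n \ge 1}$ of \cite{GK}, I would set
\[
I_s(v)^2 = |\langle v | 1 \rangle|^2 + \sum_{n \ge 1} w_n^{(s)}\, |\zeta_n(v)|^2 \, ,
\]
with weights $w_n^{(s)}$ tuned so that $I_s(v)^2$ is comparable to $\|v\|_{-s}^2$ on a dense subclass of $H^{-s}_r$. The two-sided comparison $\|v\|_{-s} \lesssim I_s(v) \lesssim \|v\|_{-s}$ is the analytic heart of the argument: I would establish it near $v=0$ by linearizing the Birkhoff map (which is tangent to a Fourier-like isomorphism at the origin), and then propagate it globally via a compactness and continuation argument on bounded sets. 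Since the actions $|\zeta_n|^2$ are conserved along classical solutions, $I_s$ is automatically an integral, yielding the stated a priori bound for smooth data.

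Given $v_0 \in H^{-s}_r$, I would then approximate it by smooth data $v_0^{(k)} \in H^\sigma_r$ with $\sigma > 3/2$ and transport the classical flow through the Birkhoff map $\Phi$ to deduce convergence. On the sequence-space side the evolution is the explicit diagonal unitary $\zeta_n \mapsto e^{-i\omega_n t}\zeta_n$, so convergence of $(\zeta_n(v_0^{(k)}))$ to $(\zeta_n(v_0))$ in the weighted space, combined with continuity of $\Phi^{-1}$, gives convergence of the classical solutions in $C(\R, H^{-s}_r)$. This defines the extended flow $\mathcal S : H^{-s}_r \to C(\R, H^{-s}_r)$, consistent with Definition \ref{def solution} since the limit is independent of the approximating sequence, and establishes property (ii) of Definition \ref{def well-posedness}. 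Invariance of the affine subspace $H^{-s}_{r,c}$ follows because $\langle v | 1 \rangle$ is continuous on $H^{-s}_r$ and constant along each approximant by \eqref{CL}.

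The main obstacle is constructing the bi-continuous extension of $\Phi$, equivalently the comparison $\|v\|_{-s} \asymp I_s(v)$. The Lax operator for BO on $\T$ is a Toeplitz-type perturbation of the Hardy shift by multiplication by $v$, and for $v \in H^{-s}_r$ with $s > 0$ this perturbation is only defined at the level of quadratic forms. Controlling its spectral data robustly in the $H^{-s}$ topology, and in particular the precise asymptotics of $\zeta_n(v)$ in terms of the Fourier coefficients of $v$, is the delicate point. The threshold $s=1/2$ appears naturally at the limit of semi-boundedness of this form perturbation, which is consistent with the sharpness assertion of the abstract.
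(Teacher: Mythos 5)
Your overall architecture coincides with the paper's: extend the Birkhoff map $\Phi$ of \cite{GK} to $H^{-s}_{r,0}$, observe that in these coordinates the flow is the explicit rotation $\zeta_n(t)=\zeta_n(0)e^{i\omega_n t}$ with conserved, affine-in-the-actions frequencies, conjugate back, and reduce the case of mean $c\neq 0$ by the Galilean shift. However, the step you yourself identify as ``the analytic heart'' is where the proposal has a genuine gap. You propose a \emph{two-sided, essentially linear} comparison $\|v\|_{-s}\asymp I_s(v)$ with $I_s(v)^2=|\langle v|1\rangle|^2+\sum_n w_n^{(s)}|\zeta_n(v)|^2$, to be proved by linearizing $\Phi$ at the origin and then ``propagating globally via a compactness and continuation argument on bounded sets.'' No such linear equivalence is established (or needed) in the paper, and the proposed method cannot deliver it: a compactness/continuation argument does not produce uniform constants on all of $H^{-s}_{r,0}$, and the actual relation proved is only a comparison through \emph{increasing functions}, namely $\|u\|_{-s}\le F_s(\|\Phi(u)\|_{1/2-s})$ together with the separate statement that $\Phi$ maps bounded sets to bounded sets. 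The lower bound is obtained by a quantitative operator-theoretic argument (Proposition~\ref{extension Phi, part 2}$(i)$): one introduces $R_u=(L_u-\lambda_0+1)^{1/2}$, interpolates to get $\|\Pi u\|_{-s}^2\le M_u^{2s}\Sigma$ with $\Sigma$ expressed through the gaps $\gamma_n$ via the $\kappa_n$--product formula, and absorbs the $u$-dependence of $M_u$ by Young's inequality. This yields the nonlinear $F_s$ that defines $I_s$ in Remark~\ref{def integral I_s}; your candidate $I_s$ with a clean weighted $\ell^2$ sum is not justified and is most likely not comparable to $\|v\|_{-s}$ by absolute constants.

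A second, independent gap: to define $\mathcal S(t,v_0)=\Phi^{-1}\bigl(\mathcal S_B(t,\Phi(v_0))\bigr)$ and to pass to the limit from classical approximants, you must know that the rotated sequence $\bigl(\zeta_n(v_0)e^{i\omega_nt}\bigr)_{n\ge1}$ lies in the \emph{range} of the extended $\Phi$, i.e.\ that $\Phi:H^{-s}_{r,0}\to h^{1/2-s}_+$ is onto (or at least that its range contains the tori ${\rm Tor}(\Phi(v_0))$), and that $\Phi^{-1}$ is continuous there. Your proposal never addresses surjectivity. In the paper it is deduced by combining the bounded-preimage property with the sequential weak continuity of $\Phi$ (approximate a target $z\in h^{1/2-s}_+$ by truncations, solve in $L^2_{r,0}$ by \cite{GK}, extract a weak limit); norm continuity of $\Phi$ and $\Phi^{-1}$ is then obtained from weak continuity plus the preservation of relative compactness. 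Without these ingredients the passage from convergence of Birkhoff coordinates to convergence of solutions in $C(\R,H^{-s}_r)$, and hence Definition~\ref{def solution} itself, is not secured. Finally, a small caution: $\mathcal S_B$ is not a fixed diagonal unitary, since $\omega_n$ depends on the initial actions; the continuity of $\zeta(0)\mapsto\mathcal S_B(\cdot,\zeta(0))$ uses the explicit affine formula \eqref{frequencies in Birkhoff} and its (weak) continuity on $\ell^2_+$.
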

\begin{remark}
$(i)$ Note that global $C^0$--wellposedness implies the group property $S^{t_1}\circ S^{t_2}=S^{t_1+t_2}$. Consequently, $S^t$ is a homeomorphism of $H^{-s}$. \\
$(ii)$ Since by \eqref{CL}, the $L^2-$norm is an integral of \eqref{BO}, $I_s$ in the case $s=0$
can be chosen as $I_0(v):= \| v \|_0^2$. The
definition of $I_s$ for $0 < s < 1/2$ can be found in 
Remark \ref{def integral I_s} in 
Section \ref{Birkhoff map}.\\
$(iii)$ By the Rellich compactness theorem, $S^t $ is also  weakly sequentially continuous, in particular on $L^2_{r,0}$. Note that this  contradicts the result of \cite{Mol1}. Very recently, however, 
an error in the proof of this statement has been found,
leading to the withdrawal of the paper (cf. arXiv:0811.0505). 
A proof of this weak continuity property was indeed the starting point 
of the present paper.
\end{remark}
\begin{remark}\label{illposedness}
 It was already observed in \cite{AH} that
the solution map $\mathcal S$ does not continuously extend
to $H^{-s}_{r}$ with $s > 1/2$. More precisely, for any $c \in \R$,
there exists a sequence
$(v_0^{(k)})_{k \ge 1}$ in $\cap_{n \ge 0} H^n_{r,c}$,
which for any $s > 1/2$
converges to an element $v_0$ in $H^{-s}_{r,c}$
so that for any $t \ne 0$, the
sequence $\mathcal S(t, v_0^{(k)})$ does not 
even converge to a distribution on $\T$
in the sense of distributions. Since the methods developed in this paper allow us to give a short proof of this result,
we include it in Section \ref{Proofs of main results}. 
\end{remark}
 One of the key ingredients of our proof of Theorem \ref{Theorem 1} 
 are explicit formulas for the frequencies of the Benjamin-Ono equation, defined by
 \eqref{frequencies in Birkhoff 0} below. 
 They are not only used to prove the global wellposedness results for \eqref{BO},
 but at the same time allow to obtain the following qualitative properties of solutions of \eqref{BO}.
  
\begin{theorem}\label{Theorem 3}
For any $v_0 \in H^{-s}_{r, c}$ with $0 < s < 1/2$ and $c \in \R$,
the solution $t \mapsto \mathcal S(t, v_0)$
has the following properties:\\
$(i)$ The orbit $\{ S(t, v_0) \, : \, t \in \R \}$
is relatively compact in $H^{-s}_{r, c}$.\\
$(ii)$ The solution $t \mapsto \mathcal S(t, v_0)$
is almost periodic in $H^{-s}_{r, c}$.
\end{theorem}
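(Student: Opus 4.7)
The strategy is to move to Birkhoff coordinates, in which the BO flow becomes a linear flow on an infinite-dimensional torus, and then read off both statements from elementary properties of such linear flows.

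Concretely, I invoke the Birkhoff map constructed in the paper (referenced in the Section \emph{Birkhoff map}), which provides a homeomorphism $\Phi: H^{-s}_{r,c} \to Z^{-s}_c$ onto a weighted sequence space of complex sequences $z=(z_n)_{n\ge 1}$ and conjugates the BO flow to the linear flow
\begin{equation*}
z_n(t) = e^{-i\omega_n(|z|^2)\,t}\, z_n(0), \qquad n\ge 1,
\end{equation*}
where the frequencies $\omega_n$ depend only on the actions $|z_n|^2$ and are therefore conserved along orbits. In particular $|z_n(t)|=|z_n(0)|$ for every $t\in\R$ and every $n$.

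For (i), set $w=\Phi(v_0)$ and consider
\begin{equation*}
K := \bigl\{(e^{i\theta_n} w_n)_{n\ge 1} : (\theta_n)_{n\ge 1}\in \R^{\N}\bigr\}\subset Z^{-s}_c.
\end{equation*}
The image of the orbit under $\Phi$ is contained in $K$, and $K$ is the image of the compact infinite-dimensional torus $\T^{\N}$ under the map $(\theta_n)\mapsto (e^{i\theta_n}w_n)$. Continuity of this map into $Z^{-s}_c$ follows from summability of the weighted $\ell^2$ norm of $w$: the tail $\sum_{n>N} \langle n\rangle^{-2s}|w_n|^2$ is small uniformly in $(\theta_n)$, so that finite-dimensional truncations (trivially continuous) approximate the map uniformly. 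Hence $K$ is compact in $Z^{-s}_c$, and the continuity of $\Phi^{-1}$ transfers this compactness back to $H^{-s}_{r,c}$, proving that the orbit of $v_0$ is relatively compact.

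For (ii), I use Bochner's criterion: a continuous curve $f:\R\to X$ in a metric space $X$ is Bohr almost periodic if and only if the family of translates $\{f(\cdot+\tau):\tau\in\R\}$ is relatively compact in $C_b(\R,X)$. In Birkhoff coordinates the curve $t\mapsto(e^{-i\omega_n t}w_n)_{n\ge 1}$ is the uniform-in-$t$ limit of the truncated curves
\begin{equation*}
t\mapsto \bigl(e^{-i\omega_1 t}w_1,\dots,e^{-i\omega_N t}w_N,0,0,\dots\bigr),
\end{equation*}
by the same tail estimate used in (i) (since $|z_n(t)|=|w_n|$ is preserved). Each truncation is a quasi-periodic, hence almost periodic, curve in $Z^{-s}_c$; since the class of almost periodic functions with values in a metric space is closed under uniform limits, the orbit map $t\mapsto z(t)$ is almost periodic in $Z^{-s}_c$. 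Composing with $\Phi^{-1}$ — which is uniformly continuous on the compact set $K$ containing the whole orbit — yields almost periodicity of $t\mapsto \mathcal S(t,v_0)=\Phi^{-1}(z(t))$ in $H^{-s}_{r,c}$.

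The real obstruction has already been absorbed into earlier parts of the paper, namely the extension of the Birkhoff map to a homeomorphism on $H^{-s}_{r,c}$ and the identification of the conjugated flow as the explicit linear rotation with frequencies $\omega_n(|z|^2)$. Granted those tools, both conclusions of the theorem are standard consequences of the fact that the motion is a rotation on a product of circles of prescribed radii, with square-summable (in the $\langle n\rangle^{-2s}$ weight) moduli.
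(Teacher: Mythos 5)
Your proposal is correct and follows the same overall strategy as the paper: pass to Birkhoff coordinates, observe that the flow is a rotation on the compact torus of fixed action, and pull compactness and almost periodicity back through $\Phi^{-1}$. Part $(i)$ is essentially identical to the paper's argument (the paper simply records that ${\rm Tor}(\Phi(u_0))$ is compact in $h^{1/2-s}_+$, which is exactly your Tychonoff-plus-tail-estimate observation). The only genuine divergence is in $(ii)$: the paper verifies Bochner's criterion directly by a Cantor diagonal extraction on the phases $e^{i\omega_n\tau_k}$, whereas you approximate the curve uniformly by finite trigonometric polynomials and invoke closure of the class of almost periodic functions under uniform limits, then compose with $\Phi^{-1}$, which is uniformly continuous on the compact set containing the orbit. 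Both routes are standard and rest on the same tail estimate; your version makes explicit the uniform-continuity step in transporting almost periodicity through $\Phi^{-1}$, which the paper absorbs into the phrase ``since $\Phi$ is a homeomorphism.'' Two cosmetic remarks: the Birkhoff coordinates of $\Phi(u)$ lie in $h^{1/2-s}_+$, so the relevant weight in the tail sums is $\langle n\rangle^{1-2s}$ rather than $\langle n\rangle^{-2s}$; and for $c\ne 0$ one should say explicitly that the statement reduces to the mean-zero case via $v(t,x)=u(t,x-2ct)+c$ and the shifted frequencies $\omega_{c,n}=\omega_n+2cn$, as the Birkhoff map itself is only defined on $H^{-s}_{r,0}$.
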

\begin{remark}
For $s=0,$ results corresponding to the ones of Theorem \ref{Theorem 3} have been obtained in \cite{GK}.
\end{remark}

In \cite{AT}, Amick\&Toland characterized the travelling wave solutions
of \eqref{BO}, originally found by Benjamin \cite{Benj} . It was shown in \cite[Appendix B]{GK} that they coincide
with the so called one gap solutions, described explicitly in \cite{GK}.
Note that one gap potentials are rational solutions of \eqref{BO} and
evolve in $H^s_r$ for any $s \ge 0$.
In \cite[Section 5.1]{AN} Angulo Pava\&Natali proved that every travelling
wave solution of \eqref{BO} is orbitally stable in $H^{1/2}_r$. Our newly developed methods
allow to complement their result as follows:
\begin{theorem}\label{Theorem 4}
Every travelling wave solution of the BO equation
is orbitally stable in $H^{-s}_r $ for any $0 \le s < 1/2$. 
\end{theorem}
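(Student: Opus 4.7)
The plan is to reduce orbital stability to a statement about the Birkhoff coordinates, where the BO flow is given by a very simple formula. From the earlier sections of the paper one expects a Birkhoff map $\Phi$ that extends the $L^2$-version of \cite{GK} to a homeomorphism $\Phi : H^{-s}_{r,c} \to \mathfrak h^{-s}$ (where $\mathfrak h^{-s}$ is a weighted sequence space of complex Birkhoff coordinates $\zeta = (\zeta_n)_{n\ge 1}$) that conjugates the BO flow to the linear-in-angles motion
\[
\zeta_n(t) = e^{-i\omega_n(I)t}\,\zeta_n(0),\qquad I_n := |\zeta_n|^2,
\]
with frequencies $\omega_n$ depending only on the actions $I = (I_k)$. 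In particular, the actions $I_n$ are integrals of the motion, and the orbit of a point in $\mathfrak h^{-s}$ is contained in the torus of fixed actions.

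Next I would identify the traveling waves in these coordinates. By the result in \cite{AT} and \cite[Appendix B]{GK}, each traveling wave solution is a one-gap potential; hence up to space translation and to fixing its mean $c$, its image $\Phi(u_0)$ has a single nonzero component $\zeta_{n_0}^\ast$. The $\mathcal S$-orbit of $u_0$ is therefore the image under $\Phi^{-1}$ of the circle
\[
\mathcal O_\ast := \{\zeta \in \mathfrak h^{-s} : \zeta_{n_0} = e^{i\theta}\zeta_{n_0}^\ast,\ \zeta_k = 0\text{ for }k\ne n_0,\ \theta\in\T\},
\]
which is also invariant under spatial translation. Hence the (action-valued) distance of a point $\zeta\in\mathfrak h^{-s}$ from $\mathcal O_\ast$ depends only on the actions:
\[
d(\zeta,\mathcal O_\ast)^2 \ \sim\ \bigl(|\zeta_{n_0}|-|\zeta_{n_0}^\ast|\bigr)^2 + \sum_{k\ne n_0}|\zeta_k|^2,
\]
where the weighting is the one making $\Phi$ a homeomorphism on $H^{-s}_{r,c}$.

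The orbital stability statement then follows by the chain: given $\varepsilon>0$, continuity of $\Phi^{-1}$ on $\mathfrak h^{-s}$ produces $\eta>0$ such that $d(\zeta,\mathcal O_\ast)<\eta$ implies $\|\Phi^{-1}(\zeta)-u\|_{-s}<\varepsilon$ for some $u\in\mathcal O_{u_0}$; continuity of $\Phi$ at $\Phi(u_0)$ produces $\delta>0$ such that $\|v_0-u_0\|_{-s}<\delta$ implies $d(\Phi(v_0),\mathcal O_\ast)<\eta$. Because the actions are preserved by the flow, $d(\Phi(\mathcal S(t,v_0)),\mathcal O_\ast) = d(\Phi(v_0),\mathcal O_\ast)$ for every $t\in\R$, so the estimate persists uniformly in time. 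The reduction from $c\neq 0$ to $c=0$ is handled by noting that the substitution $w:=v-c$ transforms \eqref{BO} into the same equation plus the linear transport $-2c\,\partial_x w$, which leaves the $H^{-s}$ norm and the translation-invariant orbit $\mathcal O_{u_0}$ unchanged, and which commutes with the BO flow in Birkhoff coordinates (it only shifts the angles).

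The main obstacle is of course establishing the analytic framework invoked above: that $\Phi$ indeed extends to a bi-continuous map $H^{-s}_{r,c}\to\mathfrak h^{-s}$ on the full range $0\le s<1/2$, that the frequencies $\omega_n$ are well-defined on this space, and that the action sums realizing the integrals $I_s$ of Theorem \ref{Theorem 1} are compatible with the weights defining $\mathfrak h^{-s}$. Once this is in place for the proof of Theorem \ref{Theorem 1}, the orbital stability argument above is essentially automatic; the work is entirely in the Birkhoff analysis, not in the stability argument itself.
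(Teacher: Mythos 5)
Your proposal is correct and follows essentially the same route as the paper: identify the travelling wave as a one-gap potential whose Birkhoff image is a circle $\mathcal O_\ast$ supported in a single coordinate, use conservation of the actions to see that the $h^{1/2-s}_+$-distance to $\mathcal O_\ast$ is a constant of motion, and transfer the stability back and forth with the bi-continuity of $\Phi$ from Theorem \ref{extension Phi}. The only (inessential) difference is packaging: the paper runs a contradiction/subsequence-extraction argument, while your direct $\varepsilon$--$\delta$ chain works provided you note explicitly that the step ``$d(\zeta,\mathcal O_\ast)<\eta \Rightarrow \Phi^{-1}(\zeta)$ is $\varepsilon$-close to the orbit'' uses the compactness of the circle $\mathcal O_\ast$ (uniformity of the continuity of $\Phi^{-1}$ near a compact set), which is the same compactness the paper exploits.
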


\medskip
\noindent
Let us comment on the novelty of our results.
\begin{itemize}
\item[1.]
A straightforward computation shows that $s_c = - 1/2$ is
the critical Sobolev exponent of the Benjamin-Ono equation.
Hence Theorem \ref{Theorem 1} and Remark 1(iv) 
imply that the threshold of well-posedness 
of \eqref{BO} on the scale of Sobolev spaces $H^s_r$ is 
given by the critical Sobolev exponent $s_c$.
\item[2.] In a recent, very interesting paper \cite{Tal}, 
Talbut proved by the method of perturbation determinants,
developed for the KdV and the NLS equations by Killip, Visan, and Zhang in \cite{KVZ},
that for any $0 < s < 1/2,$ there exists a constant 
$C_s>0$, only depending on $s$,
so that any sufficiently smooth solution $t \to v(t)$ 
of \eqref{BO} satisfies the estimate
$$
\sup_{t\in \R}\| v(t)\|_{-s}\leq  C_s 
\big( 1 + \|v(0)\|_{-s}^{\frac{2}{1-2s}} \big)^s
\|v(0)\|_{-s} \, .
$$
Note that our method allows to prove that the solution
map $\mathcal S$ continuously extends to $H^{-s}_r$.
Actually, it allows to achieve much more 
by constructing a nonlinear Fourier transform,
also referred to as Birkhoff map
(cf. Section \ref{Birkhoff map}), 
which is also of great use for proving 
Theorem \ref{Theorem 3} and Theorem \ref{Theorem 4}.
The integral $I_s$ of Theorem \ref{Theorem 1}$(iv)$
is taylored to show that for any $0 < s < 1/2$,
the Birkhoff map
$\Phi: H^{-s}_{r,0} \to h^{1/2 -s}_+$ 
is onto  
(cf. Theorem \ref{extension Phi}).\\
For recent work
on a priori bounds for Sobolev norms of smooth solutions
of the KdV equation and/or the NLS equation in 1d
and their applications to the initial value problem
of these equations see also \cite{KV}, \cite{KT}.
\item [3.] Using a probabilistic approach developed by Tzvetkov and Visciglia \cite{TV}, Y. Deng \cite{D} 
proved wellposedness result for the BO equation on the torus for almost every data with respect to a measure which is supported by $H^{-\e}_r$ for any $\e >0$ 
and has the property that $L^2_r$ has measure $0$. Our result provides a deterministic framework for these solutions.
\item[4.] A first version of this paper
appeared on arXiv in September 2019 -- see \cite{GKT}.
\end{itemize}

\smallskip

\noindent
{\em Outline of proofs.} The key idea is to 
construct for any $0 \le s < 1/2$,
globally defined canonical coordinates 
on $H^{-s}_{r,0}$ with the property that when
expressed in these coordinates, equation \eqref{BO}
can be solved by quadrature. 
Such coordinates are referred to as nonlinear Fourier coefficients or Birkhoff coordinates
and the corresponding transformation, denoted by $\Phi$, as Birkhoff map. 
Such a map was constructed on $L^2_{r,0} \equiv H^0_{r,0}$ in our previous work \cite{GK}.
In this paper we show that it can be continuously extended to the Sobolev spaces  $H^{-s}_{r,0}$
for any $0 < s < 1/2$. For this purpose we develop a new approach for
studying the Lax operator appearing in the Lax pair formulation
of \eqref{BO}.

In Section 2, we state our results on the extension of  Birkhoff map  $\Phi$ (cf. Theorem \ref{extension Phi})
 and discuss first applications. All these results are proved 
in Section \ref{extension of Phi, part 1}  and Section \ref{extension of Phi, part 2}.
In Section \ref{mathcal S_B},
we study the solution map $\mathcal S_B$
corresponding to the system of equations,
obtained when expressing \eqref{BO} in Birkhoff coordinates. 
The main point is to show that 
the frequencies of the BO equation, which have
been computed in our previous work \cite{GK}
on $L^2_{r,0}$, continuously extend to  $H^{-s}_{r, 0}$
for any $0 < s < 1/2$.  These results are then used
to study the solution map $\mathcal S$ of \eqref{BO}.
In the same section
we also introduce the solution map $\mathcal S_c$, 
related to the
equation \eqref{BO} when considered in the affine
space $H^s_{r,c}$ and study the
solution map $\mathcal S_{c, B}$, obtained by
expressing $\mathcal S_c$ in Birkhoff coordinates.
With all these preparations done,
we then prove our main results, stated in Theorem \ref{Theorem 1} -- Theorem \ref{Theorem 4},
 in Section \ref{Proofs of main results}.
We remark that the proof of Theorem \ref{Theorem 3} 
uses the same arguments as the one of Theorem 2 in  \cite{GK}, stating corresponding results for 
solutions of \eqref{BO} with initial data in $H^{0}_{r,0}$. In order to be comprehensive
and since the proof is short, we included it.

\smallskip

\noindent
{\em Related work.} 
By similar methods, results on global wellposedness of the type
stated in Theorem \ref{Theorem 1} have been obtained for other integrable PDEs such as the KdV, the KdV2,
the mKdV, and the defocusing NLS equations. In addition, a detailed analysis of the frequencies of these equations
allowed to prove in addition to the wellposedness results {\em qualitative properties} of solutions of these equations,
among them properties corresponding to the ones stated in Theorem \ref{Theorem 3} 
 -- see e.g. \cite{KT1},\cite{KT2}, \cite{KM1}, \cite{KM2}. 
 Very recently, new global wellposedness results for the KdV equation on the line were
obtained by Killip, Visan, and Zhang in \cite{KV}  by using  the integrable structure of the KdV equation 
in a novel way. By the same method, the authors also prove global wellposedness results for the
KdV equation on the torus (of the type stated in Theorem \ref{Theorem 1}) and for the NLS equation.
 
 Let us comment on the principal differences between the KdV equation and the Benjamin--Ono equation, 
 when viewed as integrable systems with a Lax pair formulation. One of the main differences is that the Lax operator $L$
 associated with the Benjamin--Ono equation is nonlocal. As a result, the spectral analysis of $L$ is of a quite different nature
 than the one of the Lax operator of the KdV equation, given by the Hill operator and hence
 being a differential operator of order two. One of the consequences of $L$ being nonlocal
 is that the study of the regularity of the Birkhoff map and of its restrictions to the scale of Sobolev spaces $H^s_{r,0}$, $s \ge 0,$
 is much more involved than in the case of the Birkhoff map of the KdV equation. We plan to address this issue in future work.
A second principal difference is that the BO frequencies are affine functionals of the symplectic actions whereas the KdV frequencies
are transcendental functionals of such actions, making it much more difficult to extend them to Sobolev spaces of functions of low regularity
or Sobolev spaces of distributions.
 A third major difference concerns finite gap potentials:  in the case of the BO equation, finite gap potentials are finite sums of Poisson kernels, 
 whereas in the case of the KdV equation, such potentials are given in terms of theta functions. 

\smallskip

\noindent
{\em Notation.}
By and large, we will use the notation established in
\cite{GK}. In particular, 
the $H^s-$norm of an element $v$ in the 
Sobolev space $H^s \equiv H^s(\T, \C)$, $s \in \R$,
will be denoted by $\|v\|_s$. It is defined by 
\begin{equation}\label{Hs norm}
\|v\|_s = 
\big( \sum_{n \in \Z} \langle n \rangle^{2s}
|\widehat v(n)|^2 \big)^{1/2}\, , \quad
 \langle n \rangle = \max\{1, |n|\}\, .
\end{equation}
For $\|v\|_0$, we usually write $\|v\|$. 
By $\langle \cdot | \cdot \rangle$, we will also
denote the extension of the $L^2-$inner product,
introduced in \eqref{L2 inner product}, 
to $ H^{-s}\times H^s$, $s \in \R$, by duality.
By $H_+$ we denote the Hardy space, consisting
of elements $f \in L^2(\T, \C) \equiv H^0$ with
the property that 
$ \widehat f(n) = 0$ for any $n < 0$.
More generally, for any $s \in \R$,  $H^{s}_+$
denotes the subspace of $H^s,$
consisting of elements $f \in H^s$ with the property
that $ \widehat f(n) = 0$ for any $n < 0$.



\section{The Birkhoff map}\label{Birkhoff map}

In this section we present our results on Birkhoff coordinates
which will be a key ingredient of the proofs of
Theorem \ref{Theorem 1} -- Theorem \ref{Theorem 4}. 
We begin by reviewing the results
on Birkhoff coordinates proved in \cite{GK}.
Recall that on appropriate Sobolev spaces,  
\eqref{BO} can be written in Hamiltonian form  
$$
\partial_t u = \partial_x (\nabla \mathcal H (u))\,, \qquad  \mathcal H (u):=  \frac{1}{2\pi}\int_0^{2 \pi} 
\big( \frac{1}{2} 
(|\partial_x|^{1/2} u)^2 - \frac{1}{3} u^3 
\big) dx
$$
where $|\partial_x|^{1/2}$ is the square root of the Fourier multiplier operator $|\partial_x|$ 
given by
$$
|\partial_x| f(x) = \sum_{n \in \Z} |n| \widehat f(n) e^{inx}\,.
$$
Note that the $L^2-$gradient $\nabla \mathcal H$ of $\mathcal H$ can be computed to be $|\partial_x| u - u^2$ and that $\partial_x \nabla \mathcal H$ is the
Hamiltonian vector field
corresponding to the Gardner bracket, defined for any two functionals $F, G : H^0_{r} \to \R$ with sufficiently regular $L^2-$gradients by
$$
 \{F, G \} := \frac{1}{2 \pi} \int_0^{2\pi} (\partial_x \nabla F) \nabla G dx\ .
$$
In \cite{GK}, it is shown that \eqref{BO} admits global Birkhoff coordinates and hence is an integrable $\Psi$DE in the strongest possible sense. To state this result in more detail, we first introduce some notation.
For any subset $J \subset \N_0 :=\mathbb Z_{\ge 0}$ 
and any $s \in \mathbb R$, 
$h^s(J) \equiv h^s (J, \mathbb C)$ denotes the weighted $\ell^2-$sequence space
$$
h^s(J) = \{ (z_n)_{n \in J} \subset \mathbb C \, : \, \| (z_n)_{ n \in J} \|_s < \infty  \}
$$
where
$$
\| (z_n)_{ n \in J} \|_s : = \big( \sum_{n \in J} \langle n \rangle^{2s} |z_n|^2 \big)^{1/2} \ , \quad  
\langle n \rangle := \text{ max} \{ 1, |n| \} \, .
$$
By $h^s(J, \R)$, we denote the real subspace of 
$h^s(J, \C)$, consisting of real sequences $(z_n)_{n \in J}$.
In case where $J = \mathbb N := \{ n \in \mathbb Z \, : \, n \ge 1 \}$ we write
$h^s_+$ instead of $h^s(\mathbb N)$.
If $s=0,$ we also write $\ell^2$ instead of $h^0$ and
$\ell^2_+$ instead of $h^0_+$.
In the sequel, we view $h^s_+$ as the $\R-$Hilbert space $h^s(\N, \R) \oplus h^s(\N, \R)$
by identifying a sequence $(z_n)_{n \in \N} \in h^s_+$ with the pair of sequences
$\big( ({\rm Re} \, z_n)_{n \in \N}, ({\rm Im} \, z_n)_{n \in \N} \big)$ in $h^s(\N, \R) \oplus h^s(\N, \R)$.
We recall that  $L^2_r = H^{0}_r$ and $L^2_{r,0} = H^0_{r,0}$.
The following result was proved in \cite{GK}:
\begin{theorem}\label{main result}(\cite[Theorem 1]{GK}) 
There exists a homeomorphism 
$$
\Phi : L^2_{r,0}\to h^{1/2}_+ \,, \, u \mapsto (\zeta_n(u))_{n \ge 1}
$$
so that the following holds:\\
(B1) For any $n \ge 1$, $\zeta_n:L^2_{r,0}\to \C $ is real analytic.\\
(B2) The Poisson brackets between the coordinate functions $\zeta_n$ are well-defined and for any $n, k \ge 1,$
\begin{equation}\label{standard bracket}
\{\zeta_n , \overline{\zeta_k} \} = - i \delta_{nk}\,, \qquad  \{\zeta_n , \zeta_k \} = 0\,.
\end{equation} 
It implies that the functionals $|\zeta_n|^2$, $n \ge 1$,
pairwise Poisson commute,
$$
\{|\zeta_n|^2 , |\zeta_k|^2 \} = 0\, , 
\quad \forall n,k \ge 1 \, .
$$
(B3) On its domain of definition, $\mathcal H \circ \Phi^{-1}$ is a (real analytic) function, which only depends on the actions $|\zeta_n|^2,$ $n \ge 1$. 
As a consequence, for any $n \ge 1$,
$|\zeta_n|^2$ is an integral of $\mathcal H  \circ \Phi^{-1}$,
$\{ \mathcal H \circ \Phi^{-1}, |\zeta_n|^2 \} = 0$.
\\
The coordinates $\zeta_n$, $n \ge 1$, are referred to as complex Birkhoff coordinates and the functionals 
$|\zeta_n|^2$, $n \ge 1$,  as action variables.
\end{theorem}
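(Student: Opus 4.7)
My plan is to construct $\Phi$ from the spectral data of the Lax operator attached to \eqref{BO}, in the spirit of the action--angle construction for KdV on the torus, but adapted to the nonlocal setting of Benjamin--Ono. The Lax operator is the self-adjoint operator on the Hardy space,
$$
L_u = D - T_u, \qquad D = -i\partial_x \big|_{H_+}, \qquad T_u \phi = \Pi(u\phi),
$$
where $\Pi$ is the Szeg\H{o} projector onto $H_+$ and $u \in L^2_{r,0}$. The first step is to establish the spectral picture: $L_u$ is self-adjoint and bounded below, with discrete simple spectrum $\lambda_0(u) < \lambda_1(u) < \cdots$ satisfying the trace-class asymptotics $\lambda_n(u) = n + O(1)$, uniformly on bounded subsets of $L^2_{r,0}$. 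Real-analyticity of each $\lambda_n$ follows from Kato's perturbation theory once simplicity and the spectral gap $\lambda_n - \lambda_{n-1} \ge 1$ (strict when $u \ne 0$ for the relevant index) are established.

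Next, the actions are defined by $\gamma_n(u) := \lambda_n(u) - \lambda_{n-1}(u) - 1 \ge 0$, and I would set $|\zeta_n(u)|^2 := n\,\gamma_n(u)$, which is tailored so that $\Phi$ lands in $h^{1/2}_+$. For the angle components one chooses $L^2$-normalised eigenfunctions $f_n$ of $L_u$, and defines
$$
\zeta_n(u) := \sqrt{n\,\gamma_n(u)}\; \frac{\kappa_n(u)}{|\kappa_n(u)|}, \qquad \kappa_n(u) := \langle 1 \,|\, f_n \rangle \cdot \text{(suitable normalising factor)},
$$
with the normalisation fixed so as to cancel the eigenfunction phase ambiguity and to make $\zeta_n$ real-analytic on $L^2_{r,0}$. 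Property (B1) then follows from analytic perturbation theory for $f_n$ together with analyticity of $\gamma_n$. The commutation relations (B2) are the heart of the construction: I would verify $\{\zeta_n, \overline{\zeta_k}\} = -i\delta_{nk}$ and $\{\zeta_n,\zeta_k\}=0$ by computing the Gardner bracket directly on the defining spectral quantities, using the Lax equation $\partial_t L = [B, L]$ to express gradients of $\lambda_n$ in terms of $|f_n|^2$ and to express the gradients of the phases of $\kappa_n$ in terms of matrix coefficients $\langle 1\,|\,f_n\rangle \langle f_n\,|\,f_k\rangle$; the bracket computation then reduces to orthonormality of the $f_n$'s.

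The proof that $\Phi$ is a homeomorphism onto $h^{1/2}_+$ I would split into three parts. Continuity of $\Phi$ and the estimate $\sum_n n\,\gamma_n(u) \asymp \|u\|_0^2$ come from a trace identity: the Hamiltonian $\mathcal H$ and $\tfrac{1}{2}\|u\|_0^2$ are symmetric functions of the $\lambda_n(u) - n$, so the $h^{1/2}_+$-norm of $\Phi(u)$ is controlled by $\|u\|_0$ and vice versa. Injectivity follows from (B3): each $\lambda_n(u)$ is reconstructed from $(|\zeta_k(u)|^2)_{k\ge 1}$, and the angles determine the eigenfunctions, hence $u$ itself through $u = -\sum_{n\ge 0}(L_u f_n, f_n)' \cdots$ (i.e.\ the off-diagonal structure of $T_u$ in the $f_n$-basis). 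Surjectivity is the main obstacle: one does this by first identifying the image of finite-gap (rational) potentials as a dense subset of $h^{1/2}_+$ with only finitely many non-zero entries, and then extending by compactness and the a priori equivalence of norms. Finally, (B3) is proved by writing $\mathcal H(u) = \sum_n \big(\lambda_n(u) - n - c_n\big) \cdot (\text{weight})$ via the trace formula for $L_u$ and its natural modifications, which expresses $\mathcal H$ as a symmetric function of the $\lambda_n(u)$, hence of the actions alone. The hardest step is unquestionably surjectivity at the exact regularity $h^{1/2}_+$; this is where the choice $|\zeta_n|^2 = n\gamma_n$ and the precise trace identity between $\|u\|_0$ and $\sum n\gamma_n(u)$ have to be matched without loss, and where density of rational potentials in $L^2_{r,0}$ enters decisively.
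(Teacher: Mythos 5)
Your overall strategy is the right one and matches the source: this theorem is not proved in the present paper but quoted verbatim from \cite[Theorem 1]{GK}, and the route there is exactly what you describe (spectral analysis of $L_u=D-T_u$ on $H_+$, gaps $\gamma_n=\lambda_n-\lambda_{n-1}-1$, normalized eigenfunctions, a trace identity, finite--gap approximation for surjectivity). However, your normalization contains a concrete error that propagates through the whole argument. You set $|\zeta_n|^2:=n\,\gamma_n(u)$. The trace formula \eqref{trace formula} reads $\|u\|^2=2\sum_{n\ge1}n\,\gamma_n(u)$, so with your choice this identity says $\|\Phi(u)\|_{\ell^2}^2=\tfrac12\|u\|^2$: your $\Phi$ maps $L^2_{r,0}$ into $\ell^2_+$, not into $h^{1/2}_+$, for which you would need $\sum_n n^2\gamma_n<\infty$ (true only for $u$ roughly in $H^{1/2}$). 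It also breaks (B2): rescaling $\zeta_n\mapsto\sqrt{n}\,\zeta_n$ turns $\{\zeta_n,\overline{\zeta_n}\}=-i$ into $-in$, so the canonical relations cannot hold with actions $n\gamma_n$. The correct normalization is $|\zeta_n|^2=\gamma_n$; the factor of $n$ you are after lives instead in the relation $|\langle 1|f_n\rangle|^2=\gamma_n\kappa_n$ with $\kappa_n\sim 1/n$, so that $\zeta_n=\langle 1|f_n\rangle/\sqrt{\kappa_n}$ behaves like $\sqrt{n}\,\langle 1|f_n\rangle$.

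A second, structural problem is your definition of the phase. You write $\zeta_n=\sqrt{n\gamma_n}\;\kappa_n/|\kappa_n|$ with $\kappa_n$ proportional to $\langle 1|f_n\rangle$; but $\gamma_n(u)=0$ if and only if $\langle 1|f_n\rangle=0$, so precisely at the degenerate points the unit phase factor is $0/0$. Declaring $\zeta_n=0$ there is fine set-theoretically, but then (B1) --- real analyticity of $\zeta_n$ across the set $\{\gamma_n=0\}$ --- cannot be read off from a formula of the form $\sqrt{\text{action}}\times(\text{unit phase})$; making the coordinate manifestly analytic through these points is the whole difficulty, and it is why the construction uses $\zeta_n=\langle 1|f_n\rangle/\sqrt{\kappa_n}$ with $\kappa_n>0$ given by an everywhere-positive spectral product. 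Finally, your treatments of (B2), (B3) and of surjectivity onto all of $h^{1/2}_+$ are one-sentence placeholders for what are multi-section computations; that would be acceptable for a plan, but as written the plan is already inconsistent with the statement it is meant to prove because of the normalization above.
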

\begin{remark}\label{RemarkThm1}
$(i)$ When restricted to submanifolds of finite gap potentials (cf. \cite[Definition 2.2 ]{GK}),
the map $\Phi$ is a canonical, real analytic diffeomorphism
onto corresponding Euclidean spaces -- see 
\cite[Theorem 3]{GK} for details.\\
$(ii)$  For any bounded subset $B$ of $L^2_{r,0}$, 
the image  $\Phi(B)$ by $\Phi$ is bounded in $h^{1/2}_+$.
This is a direct consequence of the trace formula, saying that for any $u \in L^2_{r,0}$ (cf. \cite[Proposition 3.1]{GK}), 
\begin{equation}\label{trace formula}
\|u \|^2 = 2 \sum_{n =1}^\infty n |\zeta_n|^2 \ .
\end{equation}  
\end{remark}

Theorem \ref{main result} together with Remark \ref{RemarkThm1}(i) can be used to solve 
the initial value problem of
\eqref{BO} in $L^2_{r,0}$.
Indeed, by approximating a given initial data in $L^2_{r,0}$ by finite gap potentials (cf. \cite[Definition 2.2 ]{GK}),
one concludes from \cite[Theorem 3]{GK} and Theorem \ref{main result} that equation $\eqref{BO}$, 
when expressed in the Birkhoff coordinates $\zeta = (\zeta_n)_{n \ge 1}$, reads
\begin{equation}\label{BO in Birkhoff}
\partial_t \zeta_n = 
\{\mathcal H \circ \Phi^{-1}, \zeta_n  \} =
i \omega_n \zeta_n \, , 
\quad \forall n \ge 1\, ,
\end{equation}
where 
$\omega_n$, $n \ge 1$, are the BO frequencies,
\begin{equation}\label{frequencies in Birkhoff 0}
\omega_n = 
\partial_{|\zeta_n|^2} \mathcal H \circ \Phi^{-1} \, .
\end{equation}
Since the frequencies only depend on the actions $|\zeta_k|^2$,
$k \ge 1$, they are conserved and hence \eqref{BO in Birkhoff}
can be solved by quadrature,
\begin{equation}\label{BOsolution}
\zeta_n(t)=\zeta_n(0)\, 
e^{i\omega_n(\zeta(0)) t}\ ,\quad t\in \R , \quad n \ge 1\, .
\end{equation}
By \cite[Proposition 8.1]{GK}),  
$\mathcal H_B := \mathcal H \circ \Phi^{-1} $
can be computed as
\begin{equation}\label{Hamiltonian in Birkhoff}
\mathcal H_B (\zeta) := \sum_{k=1}^\infty k^2 |\zeta_k|^2 -
\sum_{k=1}^\infty (\sum_{p = k }^\infty |\zeta_p|^2 )^2\, ,
\end{equation}
implying that the frequencies, defined by 
\eqref{frequencies in Birkhoff 0}, are given by
\begin{equation}\label{frequencies in Birkhoff}
\omega_n(\zeta) = n^2 - 
2 \sum_{k=1}^{\infty} \min(n,k) |\zeta_k|^2 \, ,
\quad \forall n \ge 1\, .
\end{equation}
Remarkably, for any $n \ge 1$, $\omega_n$ depends 
{\em linearly} on the actions 
$|\zeta_k|^2$, $k \ge 1$. Furthermore, while the Hamiltonian
$\mathcal H_B$ is defined on $h^{1}_+$, 
the frequencies $\omega_n$, $n \ge 1$, given by
\eqref{frequencies in Birkhoff} for $\zeta \in h^1_+,$
extend to bounded functionals on $\ell^2_+$,
\begin{equation}\label{frequency map}
\omega_n: \ell^2_+ \to \R,\, 
\zeta = (\zeta_k)_{k \ge 1} \mapsto \omega_n(\zeta)\, .
\end{equation}
We will prove 
that the restriction $\mathcal S_0$ 
of the solution map of \eqref{BO}
to $L^2_{r,0}$, when expressed in
Birkhoff coordinates,
$$
\mathcal S_B : h^{1/2}_+ \to C(\R, h^{1/2}_+) \, , \
\zeta(0) \mapsto 
(\zeta_n(0)\, e^{i\omega_n(\zeta(0)) t})_{n \ge 1}
$$
is continuous -- see Proposition \ref{S in Birkhoff continuous} in Section \ref{mathcal S_B}.
By Theorem \ref{main result},
$\Phi: L^2_{r,0} \to h^{1/2}_+$ and
its inverse $\Phi^{-1}: h^{1/2}_+ \to L^2_{r,0}$
are continuous. Since
$$
\mathcal S_0 = \Phi^{-1} \mathcal S_B \Phi\, : \,  
L^2_{r,0} \to C(\R, L^2_{r,0}) \, , \, 
u(0) \mapsto  \Phi^{-1}\mathcal S_B(t, \Phi(u(0)))
$$
it follows that 
$\mathcal S_0 : L^2_{r,0} \to C(\R, L^2_{r,0})$
is continuous as well. We remark that for any
$u(0) \in L^2_{r,0}$, the solution
$t \mapsto \mathcal S(t, u(0))$ 
can be approximated
in $L^2_{r,0}$ by classical solutions of equation \eqref{BO} (cf. Remark \ref{RemarkThm1}$(i)$)
and thus coincides with the solution,
obtained by Molinet in \cite{Mol} (cf. also \cite{MP}). 

Starting point of the proof of Theorem \ref{Theorem 1}
is formula \eqref{flow map BO} in Subsection \ref{solution map Sc}. We will show that it
extends to the Sobolev spaces $H^{-s}_{r,0}$ for
any $0 < s < 1/2$.
A key ingredient to prove Theorem \ref{Theorem 1} is
therefore the following result on the extension 
of the Birkhoff map  $\Phi$ to 
$H^{-s}_{r,0}$ for any $0 < s < 1/2$:
\begin{theorem}\label{extension Phi}
{\sc (Extension of $\Phi$.)}
For any $0 < s < 1/2,$ the map $\Phi$ of 
Theorem \ref{main result} admits an extension, 
also denoted by $\Phi$,
$$\Phi :H^{-s}_{r,0} \rightarrow h^{1/2-s}_+, \,\,  u \mapsto 
\Phi (u):=(\zeta_n(u))_{n\ge 1}\, ,$$
so that the following holds: \\
$(i)$ $\Phi$ is a homeomorphism. \\
$(ii)$ There exists an increasing function 
$F_s : \R_{>0} \to \R_{>0}$ so that 
$$
\|u\|_{-s} \le F_s( \| \Phi(u) \|_{1/2 - s}) 
\, , \qquad \forall u \in H^{-s}_{r,0} \, .
$$
$(iii)$ $\Phi$ and its inverse map
bounded subsets to bounded subsets.
\end{theorem}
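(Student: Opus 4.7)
The plan is to extend $\Phi$ and $\Phi^{-1}$ from the dense subset $L^2_{r,0} \subset H^{-s}_{r,0}$ by establishing two-sided quantitative bounds relating $\|u\|_{-s}$ to $\|\Phi(u)\|_{1/2-s}$ on $L^2_{r,0}$, and then invoking the homeomorphism property already proved in Theorem \ref{main result}. The guiding analogy is the trace formula \eqref{trace formula}, which for $s=0$ identifies $\|u\|_0^2$ with $2\|\Phi(u)\|^2_{1/2}$; for $0<s<1/2$ one needs similar control between the weighted sum $\sum_{n\ge 1} \langle n\rangle^{1-2s}|\zeta_n(u)|^2$ and $\|u\|_{-s}^2$.

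First I would prove the upper bound, namely that there is a continuous increasing $G_s:\R_{>0}\to\R_{>0}$ with $\|\Phi(u)\|_{1/2-s}\le G_s(\|u\|_{-s})$ for all $u\in L^2_{r,0}$. The mechanism is a new generating-function identity for the Lax operator $L_u$ of the BO equation acting on the Hardy space: by expanding an appropriate perturbation determinant (or resolvent trace) in a small parameter, one should produce, for a family of weights $f$, identities of the form $\sum_{n\ge 1}f(n)\,|\zeta_n(u)|^2=\Psi_f(u)$. Choosing $f(n)=\langle n\rangle^{1-2s}$ (directly or via a Mellin/integral superposition of simpler weights that are already controlled by the $L^2$-type trace formula machinery of \cite{GK}) and bounding $\Psi_f(u)$ by a Fourier-multiplier estimate in terms of $\|u\|_{-s}$ yields the desired inequality. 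The integral $I_s$ promised in Theorem \ref{Theorem 1}$(iv)$ is then defined directly from the action side as a continuous expression equivalent to $\|\Phi(\cdot)\|_{1/2-s}$.

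Second, I would establish the lower bound $\|u\|_{-s}\le F_s(\|\Phi(u)\|_{1/2-s})$. Here the explicit reconstruction of $u$ from its Birkhoff coordinates available in \cite{GK} is combined with weighted $\ell^2$ estimates on $\widehat u(n)$: heuristically, the $h^{1/2-s}_+$ decay of $(\zeta_n)$ translates into $\langle n\rangle^{-s}$ averaged control of $\widehat u(n)$. Since the reconstruction formulas are multilinear in the actions and the coordinates, the resulting bound is a polynomial-type expression in $\|\Phi(u)\|_{1/2-s}$, which furnishes the increasing function $F_s$ of $(ii)$. With both quantitative bounds available on the dense subspace $L^2_{r,0}$, a standard density/Cauchy argument then extends $\Phi$ to a continuous map $H^{-s}_{r,0}\to h^{1/2-s}_+$; running the same argument with $\Phi^{-1}$, whose continuity on $h^{1/2}_+\to L^2_{r,0}$ is already known, supplies the continuous inverse. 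This gives $(i)$, while $(ii)$ and $(iii)$ are direct consequences of the two bounds.

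The main obstacle will be twofold. First, the trace/identity machinery that produced \eqref{trace formula} and \eqref{Hamiltonian in Birkhoff} in \cite{GK} must be refined to handle fractional Sobolev regularity: passing from $s=0$ to $0<s<1/2$ effectively requires controlling a continuum-weighted superposition of conservation laws rather than a single one, and keeping uniform estimates across this superposition is delicate. Second, and more seriously, for $u\in H^{-s}_{r,0}$ the Toeplitz operator $T_u$ appearing in $L_u=D-T_u$ is no longer bounded on $H_+$, so $L_u$ must be redefined via quadratic forms (or through resolvent expansions at a sufficiently negative spectral parameter) before any spectral analysis can begin. This is precisely the \emph{new approach for studying the Lax operator} flagged in the Introduction, and it is what occupies Sections~\ref{extension of Phi, part 1}--\ref{extension of Phi, part 2}.
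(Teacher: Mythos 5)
Your high-level shape (a bound on $\|\Phi(u)\|_{1/2-s}$ in terms of $\|u\|_{-s}$, a reverse bound, and the quadratic-form redefinition of $L_u$, which you correctly flag as the essential new difficulty) matches the paper's, but the central mechanism you propose for producing the extension does not work. Two-sided \emph{norm} bounds $\|\Phi(u)\|_{1/2-s}\le G_s(\|u\|_{-s})$ and $\|u\|_{-s}\le F_s(\|\Phi(u)\|_{1/2-s})$ on the dense subspace $L^2_{r,0}$ control the size of $\Phi(u)$, not the size of $\Phi(u)-\Phi(v)$; since $\Phi$ is nonlinear, $\Phi(u_k)-\Phi(u_j)\neq\Phi(u_k-u_j)$, and a ``standard density/Cauchy argument'' would require uniform continuity of $\Phi$ (on bounded sets) from the $H^{-s}$ topology to the $h^{1/2-s}$ topology, which is neither available nor established by your bounds. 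The same objection applies to your one-line extension of $\Phi^{-1}$, and in addition you never address surjectivity of the extended map onto $h^{1/2-s}_+$, which is part of claim $(i)$ and is not automatic from extending $\Phi^{-1}$ off the dense subspace $h^{1/2}_+$. The paper circumvents all of this by \emph{defining} $\zeta_n(u)$ directly for every $u\in H^{-s}_{r,0}$ through the spectral theory of the form-defined Lax operator (the formula $\zeta_n=\langle 1|f_n\rangle/\sqrt{\kappa_n}$ makes sense once one knows $(\langle\Pi u|f_n\rangle)_{n}\in h^{-s}$ and $\kappa_n^{-1/2}\sim\sqrt n$), then proves sequential weak continuity of $\Phi$ and $\Phi^{-1}$ and upgrades to norm continuity by showing that relatively compact sets are mapped to relatively compact sets; surjectivity follows from truncation in $h^{1/2-s}_+$, the a priori bound on preimages, and weak compactness.

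Two further points on your ingredient lemmas. For the upper bound you propose a perturbation-determinant identity $\sum_n f(n)|\zeta_n|^2=\Psi_f(u)$ with $f(n)=\langle n\rangle^{1-2s}$; this is essentially Talbut's route, which the paper explicitly contrasts with its own, and it yields a priori estimates for smooth data but by itself does not define $\Phi(u)$ for $u\in H^{-s}_{r,0}$ nor give continuity. The paper instead gets boundedness of $\Phi$ on bounded sets from the identity $\lambda_n\langle 1|f_n\rangle=-\langle\Pi u|f_n\rangle$ together with the isomorphism $f\mapsto(\langle f|f_n\rangle)_n$ between $H^{-s}_+$ and $h^{-s}(\N_0)$. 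For the reverse bound, the reconstruction series for $\widehat u(k)$ in terms of the $\zeta_n$ is a genuinely multilinear expansion whose weighted summability in $H^{-s}$ you would have to control; the paper avoids this by an interpolation argument with the fractional power $R_u^{-2s}$ of the shifted Lax operator applied to $\Pi u$, reducing everything to a weighted sum over the gaps $\gamma_n$. As written, your proposal therefore has a genuine gap at the extension step and leaves surjectivity unproved.
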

\begin{remark}\label{weakPhi}
Notice that $(i)$ and $(iii)$ combined with the Rellich compactness theorem imply that for $0\leq s< \frac 12$,
the map $\Phi :H^{-s}_{r,0} \rightarrow h^{1/2-s}_+$ and its inverse $\Phi^{-1}:  h^{1/2-s}_+ \rightarrow  H^{-s}_{r,0}$
 are weakly sequentially continuous on $H^{-s}_{r,0}$.
\end{remark}
\begin{remark}\label{def integral I_s}
The above a priori bound for $\| u \|_{-s}$
can be extended to the space $H^{-s}_r$ as follows
$$
 \| v \|_{-s} 
\le F_s(\| \Phi(v - [v]) \|_{1/2 - s}) + |[v]| \, ,
\quad [v] = \langle v | 1 \rangle \, , \qquad
\forall v \in H^{-s}_r \, .
$$
For any $0 < s < 1/2$, the integral $I_s$ in 
Theorem \ref{Theorem 1}$(iv)$ is defined as
$$
I_s(v) := F_s(\| \Phi(v - [v]) \|_{1/2 - s}) + |[v]| \, .
$$
\end{remark}

\medskip

\noindent
{\em Ideas of the proof of Theorem \ref{extension Phi}.} 
At the heart of the proof of Theorem 1 in \cite{GK} is the Lax operator $L_u$,
appearing in the Lax pair formulation in \cite{Nak} (cf. also \cite{BK}, \cite{CW}, \cite{FA})
$$
\partial_t L_u = [B_u, L_u]
$$
of \eqref{BO} -- see \cite[Appendix A]{GK} for a review.
For any given $u \in L^2_{r}$, the operator $L_u$
is the first order operator acting on the Hardy space $H_+$,
$$
L_u := -i \partial_x  - T_u\, , \qquad 
T_u (\cdot) := \Pi(u \, \cdot)
$$
where $\Pi$ is the orthogonal projector of $L^2$ onto $H_+$ and  $T_u$ is the Toeplitz operator with symbol $u$, 
$$
H_+ := 
\{ f \in L^2 \ : \ \widehat f(n) = 0 \, \, \, \forall n < 0  \} \, .
$$
The operator $L_u$ is self-adjoint with domain 
$H^1_+:= H^1 \cap H_+$, bounded from below, and has a compact resolvent. Its spectrum consists of real eigenvalues which
bounded from below.
When listed in increasing order 
they form a sequence, satisfying
$$
\lambda_0 \le \lambda_1 \le \cdots \, ,   \qquad
\lim_{n \to \infty}\lambda_n = \infty \, .
$$
For our purposes, the most important properties of 
the spectrum of $L_u$ are that 
the eigenvalues are conserved along the flow of \eqref{BO}
and that they are all simple. More precisely, one has
\begin{equation}\label{def nth gap}
\gamma_n := \lambda_n - \lambda_{n-1} -1 \ge 0\, , \quad
\forall n \ge 1\, .
\end{equation}
The nonnegative number $\gamma_n$ is referred to  
as the $n$th gap
of the spectrum ${\rm{spec}}(L_u)$ of $L_u$.
-- see \cite[Appendix C]{GK} for an explanation of this terminology. For any $n \ge 1$, the complex 
Birkhoff coordinate $\zeta_n$
of Theorem \ref{main result} is related to $\gamma_n$ by
$|\zeta_n|^2 = \gamma_n$ whereas its phase is defined
in terms of an  appropriately normalized eigenfunction $f_n$
of $L_u$, corresponding to the eigenvalue $\lambda_n$.\\
A key step for the proof of Theorem \ref{extension Phi} 
is to show that
for any $u \in H^{-s}_{r}$ with $0 < s < 1/2$, 
the Lax operator $L_u$
can be defined as a self-adjoint operator with domain included in
$H^{1-s}_+$ and that its spectrum has properties
similar to the ones described above in the case where
$u \in L^2_r$. In particular, the inequality \eqref{def nth gap} continues to hold.
Since  the proof of Theorem \ref{extension Phi} requires several steps, it
is split up into  two parts, corresponding to
Section \ref{extension of Phi, part 1} and 
Section \ref{extension of Phi, part 2}.

\smallskip

A straightforward application of Theorem \ref{extension Phi}
is the following result on isospectral potentials.
To state it, we need to introduce some additional notation. For any $\zeta \in h^{1/2-s}_+$, define
\begin{equation}\label{def tor}
{\rm{Tor}}(\zeta) := \{ z \in h^{1/2-s}_+ \, : \,
|z_n| = |\zeta_n| \, \,\,\forall n \ge 1 \}.
\end{equation}
Note that ${\rm{Tor}}(\zeta)$ is an infinite product of (possibly degenerate) circles
and a compact
subset of $h^{1/2-s}_+$. Furthermore, for any 
$u \in H^{-s}_{r,0}$, let
$${\rm{Iso}}(u) := \{ v \in H^{-s}_{r,0} \, : \,
 {\rm{spec}}(L_v) = {\rm{spec}}(L_u) \} \, .
$$ 
where as above, ${\rm{spec}}(L_u)$ denotes the spectrum
of the Lax operator $L_u := -i \partial_x  - T_u$.
The spectrum of $L_u$ continues to be
characterized in terms of its gaps $\gamma_n$, $n \ge 1$, 
(cf. \eqref{def nth gap}) and
the extended Birkhoff coordinates continue to satisfy
$| \zeta_n |^2= \gamma_n$, $n \ge 1$.
An immediate consequence of Theorem \ref{extension Phi}
then is that \cite[Corollary 8.1]{GK} extends as follows:
\begin{corollary}\label{isospectral set}
For any $u \in H^{-s}_{r,0}$ with $0 < s < 1/2$, 
$$
\Phi({\rm{Iso}}(u)) = {\rm{Tor}}(\Phi(u)) \, .
$$
Hence by the continuity of $\Phi^{-1}$,
${\rm{Iso}}(u)$ is a compact, connected subset
of $H^{-s}_{r,0}$.
\end{corollary}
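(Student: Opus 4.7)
\medskip
\noindent
\emph{Proof plan for Corollary \ref{isospectral set}.}
The strategy is to translate the isospectrality condition into a condition purely on the moduli of the extended Birkhoff coordinates, and then read off the conclusion from Theorem \ref{extension Phi} together with elementary topological properties of the infinite product of circles ${\rm Tor}(\Phi(u))$.

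First I would establish the reformulation ${\rm Iso}(u) = \Phi^{-1}({\rm Tor}(\Phi(u)))$, from which the displayed identity $\Phi({\rm Iso}(u)) = {\rm Tor}(\Phi(u))$ follows by applying $\Phi$ (a bijection by Theorem \ref{extension Phi}$(i)$). The forward inclusion is immediate: if $v \in {\rm Iso}(u)$ then ${\rm spec}(L_v) = {\rm spec}(L_u)$, so in particular the gap sequences agree, $\gamma_n(v) = \gamma_n(u)$ for all $n \ge 1$; the relation $|\zeta_n|^2 = \gamma_n$ recalled in the excerpt then gives $|\zeta_n(v)| = |\zeta_n(u)|$, i.e.\ $\Phi(v) \in {\rm Tor}(\Phi(u))$. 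For the reverse inclusion, take $z \in {\rm Tor}(\Phi(u))$ and use the surjectivity part of Theorem \ref{extension Phi}$(i)$ to set $v := \Phi^{-1}(z) \in H^{-s}_{r,0}$. Then $|\zeta_n(v)| = |z_n| = |\zeta_n(u)|$, so $\gamma_n(v) = \gamma_n(u)$ for every $n \ge 1$, and since the spectrum of $L_u$ is characterized by the gaps (as emphasized in the excerpt, $\lambda_0$ being fixed by the $\gamma_n$'s and the vanishing of the mean via the appropriate trace formula), we conclude ${\rm spec}(L_v) = {\rm spec}(L_u)$, i.e.\ $v \in {\rm Iso}(u)$.

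Second, I would verify that ${\rm Tor}(\Phi(u))$ is a compact and connected subset of $h^{1/2-s}_+$. Compactness is sequential: for any sequence $(z^{(k)})$ in ${\rm Tor}(\Phi(u))$ a diagonal extraction yields componentwise convergence to some $z$ with $|z_n| = |\zeta_n(u)|$, and the uniform tail bound $\sum_{n>N}\langle n\rangle^{1-2s}|z_n^{(k)} - z_n|^2 \le 4\sum_{n>N}\langle n\rangle^{1-2s}|\zeta_n(u)|^2$ promotes this to convergence in $h^{1/2-s}_+$. Connectedness follows by exhibiting continuous paths: for $z, w \in {\rm Tor}(\Phi(u))$ write $z_n = |\zeta_n(u)|e^{i\alpha_n}$, $w_n = |\zeta_n(u)|e^{i\beta_n}$ and interpolate the phases linearly, the same tail estimate showing continuity of the resulting curve into $h^{1/2-s}_+$.

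Finally, since $\Phi^{-1}: h^{1/2-s}_+ \to H^{-s}_{r,0}$ is continuous by Theorem \ref{extension Phi}$(i)$, the set ${\rm Iso}(u) = \Phi^{-1}({\rm Tor}(\Phi(u)))$ is the continuous image of a compact and connected set, hence compact and connected in $H^{-s}_{r,0}$. The main conceptual point—and the only place where the low regularity setting really intervenes—is the characterization of ${\rm spec}(L_u)$ by the gaps $\gamma_n$ for $u \in H^{-s}_{r,0}$; this was announced in the extension discussion preceding the corollary and is the one ingredient I would need to lean on from Sections \ref{extension of Phi, part 1}--\ref{extension of Phi, part 2}.
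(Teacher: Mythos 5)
Your proposal is correct and follows essentially the same route the paper intends: the paper presents the corollary as an immediate consequence of Theorem \ref{extension Phi} together with the relations $|\zeta_n|^2=\gamma_n$ and the fact that the gaps determine the spectrum (via $-\lambda_0=\sum_{n\ge1}\gamma_n$ and $\lambda_n=n-\sum_{k>n}\gamma_k$), which is exactly the translation ${\rm Iso}(u)=\Phi^{-1}({\rm Tor}(\Phi(u)))$ you carry out. Your explicit verification of compactness and connectedness of ${\rm Tor}(\Phi(u))$ fills in details the paper merely asserts, and is accurate.
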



\section[extension of Birkhoff map. Part 1]{Extension of $\Phi$. Part 1}\label{extension of Phi, part 1}

In this section we prove the first part of 
Theorem \ref{extension Phi}, which we state
as a separate result:
\begin{proposition}\label{extension Phi, part 1}
{\sc (Extension of $\Phi$. Part 1)}
For any $0 < s < 1/2$, the following holds:\\
$(i)$ For any $n \ge 1$, the formula in \cite[(4.1)]{GK} of
the Birkhoff coordinate
$\zeta_n : L^2_{r,0} \to \C$ extends to $H^{-s}_{r,0}$ 
and for any $u \in H^{-s}_{r,0}$,
$(\zeta_n(u))_{n \ge 1}$ is in $h^{1/2-s}_+$.
The extension of the map $\Phi$ of Theorem \ref{main result}, 
also denoted by $\Phi$,
$$\Phi :H^{-s}_{r,0} \rightarrow h^{1/2-s}_+, \,\,  u \mapsto 
\Phi (u):=(\zeta_n(u))_{n\ge 1}\, ,$$
 maps bounded subsets of $H^{-s}_{r,0}$ 
to bounded subsets of $h^{1/2-s}_+$.\\
$(ii)$ $\Phi$ is sequentially weakly continuous
and one-to-one.
\end{proposition}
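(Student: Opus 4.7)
The overall plan is to define the Lax operator $L_u = -i\partial_x - T_u$ for $u \in H^{-s}_r$ via quadratic forms, to recover from it the spectral picture that underlies the construction of the Birkhoff coordinates in \cite{GK}, and then to extend the formulas defining $\zeta_n$. The first task is that for $0 < s < 1/2$ and $u \in H^{-s}_r$, the sesquilinear form $q_u(f,g) := \langle u \, | \, f \overline g \rangle$ is well defined on the form domain $H^{1/2}_+$ of $-i\partial_x$: on $\T$ the Sobolev multiplication bound $\|f \overline g\|_s \lesssim \|f\|_{1/2}\|g\|_{1/2}$ holds for every $s < 1/2$, and together with the compactness of the embedding $H^{1/2}_+ \hookrightarrow H^s_+$ this shows that $q_u$ is $(-i\partial_x)$-form-bounded with relative bound zero. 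The KLMN theorem then delivers $L_u$ as a self-adjoint, bounded-below operator with compact resolvent, whose form domain is $H^{1/2}_+$ and whose operator domain is contained in $H^{1-s}_+$. Its spectrum is therefore discrete, $\lambda_0 \le \lambda_1 \le \cdots \to \infty$, and the shift-commutation argument of \cite{GK} relating $L_u$ to its conjugate by multiplication with $e^{ix}$ can be carried out at the level of quadratic forms on $H^{1/2}_+$; this yields the simplicity of each $\lambda_n$ together with the gap inequality $\gamma_n := \lambda_n - \lambda_{n-1} - 1 \ge 0$ for every $n \ge 1$. The corresponding eigenfunctions $f_n$ then belong to $H^{1-s}_+$, and the formula \cite[(4.1)]{GK} for $\zeta_n$, which involves $\sqrt{\gamma_n}$ and an inner product of shifted normalized eigenfunctions, extends unambiguously to this setting.

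The quantitative content of (i) is a bound of the form
\[
\sum_{n \ge 1} n^{1-2s}\, \gamma_n \le C_s \bigl(1 + \|u\|_{-s}\bigr)^{\alpha_s} \|u\|_{-s}^2
\]
for some $\alpha_s \ge 0$, from which $\Phi(u) \in h^{1/2-s}_+$ and the bounded-to-bounded property follow at once. I would obtain this by applying the $L^2$ trace formula \eqref{trace formula} to smooth truncations $u_\varepsilon$ of $u$ and controlling the weighted sums of gaps by a perturbation-determinant argument in the spirit of \cite{Tal} and \cite{KVZ}, using that for $0 < s < 1/2$ the form perturbation $T_u$ remains subordinate to $-i\partial_x$ with constants that are uniform as $\varepsilon \to 0$.

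For the sequential weak continuity in (ii), suppose $u_k \rightharpoonup u$ in $H^{-s}_{r,0}$. The sequence is bounded and by Rellich converges strongly to $u$ in $H^{-s'}_{r,0}$ for every $s < s' < 1/2$. Since $T_{u_k - u}$ is form-small relative to $-i\partial_x$ with constants depending only on $\|u_k - u\|_{-s'}$, norm-resolvent convergence $L_{u_k} \to L_u$ follows, which in turn implies convergence of each simple eigenvalue $\lambda_n(u_k) \to \lambda_n(u)$ and of the associated rank-one spectral projectors. Substituting these into the extended formula for $\zeta_n$ gives $\zeta_n(u_k) \to \zeta_n(u)$ for every $n \ge 1$, and combined with the $h^{1/2-s}_+$ boundedness from (i) this yields $\Phi(u_k) \rightharpoonup \Phi(u)$. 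Injectivity is inherited from Theorem \ref{main result}: the reconstruction of $u$ from $(\zeta_n(u))_{n \ge 1}$ available on $L^2_{r,0}$ extends continuously to $H^{-s}_{r,0}$ via the bounded-to-bounded property and the density of $L^2_{r,0}$ in $H^{-s}_{r,0}$, so that $\Phi(u) = \Phi(v)$ forces $u = v$.

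The step I expect to be the main obstacle is the spectral package in the first paragraph, namely establishing the simplicity of each $\lambda_n$ and the gap bound $\gamma_n \ge 0$ when the eigenfunctions of $L_u$ are only guaranteed to lie in $H^{1-s}_+$. In the smooth case these facts rest on pointwise manipulations of the $f_n$ and on an algebra structure at the level of $L^2$; in the present distributional setting every such identity has to be reinterpreted in the quadratic-form framework, with careful attention to the fact that the shift-commutation relations of \cite{GK} no longer hold as operator identities on all of the operator domain.
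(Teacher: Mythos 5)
Your operator-theoretic setup matches the paper's: both define $L_u$ through the quadratic form $\langle u\,|\,f\bar f\rangle$ on $H^{1/2}_+$ (the paper via a Lax--Milgram construction, you via KLMN, which is the same mechanism), both obtain a self-adjoint, bounded-below operator with compact resolvent and operator domain in $H^{1-s}_+$, and both transplant the simplicity/gap argument of \cite{GK} to the form level. Your treatment of sequential weak continuity in $(ii)$ is a genuinely different and arguably cleaner route: the paper proves weak continuity of the generating function $\mathcal H_\lambda$ by a compactness--diagonal argument and then extracts convergence of $\gamma_n$, $\lambda_n$, $\kappa_n$ and of $\langle 1|f_n\rangle$ separately (the last via a uniform $H^{1-s}$ bound on the eigenfunctions), whereas your Rellich-plus-norm-resolvent-convergence argument delivers eigenvalue and projector convergence in one stroke; you would still need to check that the phase normalization of $f_n$ and the infinite product defining $\kappa_n$ pass to the limit (the latter needs $\ell^1$ convergence of the gaps, which follows from $-\lambda_0=\sum_n\gamma_n$ plus pointwise convergence), but these are fillable details.

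The genuine gap is the quantitative heart of $(i)$, the bound $\sum_{n\ge1}n^{1-2s}\gamma_n(u)\le F_s(\|u\|_{-s})$. You correctly identify this as the content of the bounded-to-bounded claim, but your proposed derivation does not work as described: the $L^2$ trace formula applied to smooth truncations $u_\varepsilon$ gives $\sum_n n\,\gamma_n(u_\varepsilon)=\|u_\varepsilon\|^2/2$, which diverges as $\varepsilon\to0$ when $u\notin L^2$, and there is no elementary interpolation passing from this identity to a weighted sum with weight $n^{1-2s}$ controlled by $\|u\|_{-s}$. What you actually invoke is the perturbation-determinant machinery of \cite{Tal}, \cite{KVZ} -- a substantial separate piece of analysis that you do not supply, so the crux of the proposition is outsourced rather than proved. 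The paper's mechanism here is entirely different and is the idea missing from your proposal: it shows that the eigenfunction expansion map $K_{u;t}:H^t_+\to h^t(\N_0)$, $f\mapsto(\langle f|f_n\rangle)_{n\ge0}$, is a linear isomorphism for the whole range $-1+s\le t\le 1-s$ (built from the $Q_u^+$-orthonormal basis $\tilde f_n$ at $t=1/2$, then duality and interpolation), applies it to $f=\Pi u\in H^{-s}_+$ to get $(\langle\Pi u|f_n\rangle)_n\in h^{-s}(\N_0)$, and combines this with the identity $\lambda_n\langle 1|f_n\rangle=-\langle\Pi u|f_n\rangle$, the two-sided bound $n-\tfrac12-\eta_s(\|u\|_{-s})\le\lambda_n\le n$, and the asymptotics $\kappa_n^{-1/2}=\sqrt n+o(1)$ to conclude $(\zeta_n(u))_n\in h^{1/2-s}_+$ with norm controlled on bounded sets. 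No weighted trace formula is needed. Your injectivity argument is also thinner than it should be: density of $L^2_{r,0}$ plus bounded-to-bounded does not by itself let you ``extend the reconstruction continuously''; what makes it work (and what the paper uses) is that the explicit formulas of \cite[Proposition 4.2]{GK} expressing $\widehat u(k)$ in terms of $(\zeta_n(u))_n$ remain valid verbatim on $H^{-s}_{r,0}$.
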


 First we need to establish some auxiliary results
 related to the Lax operator $L_u$.

 \begin{lemma}\label{estimate for T_u f}
 Let $u \in H^{-s}_{r,0}$ with $0 \le s < 1/2$.
 Then for any $f, g \in H^{1/2}_+$, 
 the following estimates hold:\\
 $(i)$ There exists a constant $C_{1,s} >0$ only depending
 on $s$, so that
 \begin{equation}\label{para}
 \| f g \|_s \le 
  C_{1,s}^2 \| f\|_\sigma \| g \|_\sigma\, , \qquad
 \sigma:= (1/2 +s)/2 \,.
 \end{equation}
$(ii)$ The expression $\langle  u | f \overline f \rangle$
is well defined and
satisfies the estimate
 \begin{equation}\label{estimate Toeplitz}
 | \langle  u | f \overline f \rangle |
 \le \frac{1}{2} \| f \|_{1/2}^2 + 
 \eta_s(\| u \|_{-s}) \|f \|^2
 \end{equation}
 where
 \begin{equation}\label{def eta_s}
 \eta_s(\| u \|_{-s}) := \|u\|_{-s}
 \big( 2 ( 1 + \|u\|_{-s}))^{\alpha} C_{2,s}^2 \, ,
 \quad \alpha:= \frac{1+2s}{1-2s} \, 
 \end{equation}
 and $C_{2,s} >0$ is a constant, only depending on $s$.
 \end{lemma}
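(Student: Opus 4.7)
The plan is to prove part (i) by a standard one-dimensional Sobolev product estimate, and then deduce part (ii) by combining duality, (i), interpolation, and Young's inequality.

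For part (i), observe that with $\sigma = 1/4 + s/2$ we have $2\sigma - s = 1/2$ and $\sigma \ge s$ (since $s < 1/2$), which is precisely the endpoint case of the $1$D Sobolev multiplication embedding $H^{\sigma}(\T)\cdot H^{\sigma}(\T) \hookrightarrow H^{s}(\T)$. The cleanest way I would carry this out is directly on Fourier coefficients. Writing $\widehat{fg}(n) = \sum_{k\in\Z}\widehat f(k)\widehat g(n-k)$, Cauchy--Schwarz gives
\[
\langle n\rangle^{2s}|\widehat{fg}(n)|^{2}
\le \Big(\sum_{k}\langle k\rangle^{-2\sigma}\langle n-k\rangle^{-2\sigma}\langle n\rangle^{2s}\Big)
\Big(\sum_{k}\langle k\rangle^{2\sigma}\langle n-k\rangle^{2\sigma}|\widehat f(k)|^{2}|\widehat g(n-k)|^{2}\Big),
\]
and a classical Schur-type computation shows that the first bracket is bounded uniformly in $n$ by a constant $C_{1,s}^{4}$ depending only on $s$. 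Summing in $n$ and applying Cauchy--Schwarz in $(k,n-k)$ yields $\|fg\|_{s}\le C_{1,s}^{2}\|f\|_{\sigma}\|g\|_{\sigma}$.

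For part (ii), first note that for $f\in H^{1/2}_+\subset H^{\sigma}$ and $u\in H^{-s}_{r}$, the product $f\bar f$ lies in $H^{s}$ by (i), so $\langle u|f\bar f\rangle$ is well defined through the $H^{-s}$--$H^{s}$ duality. Applying this duality together with (i),
\[
|\langle u|f\bar f\rangle| \le \|u\|_{-s}\|f\bar f\|_{s} \le C_{1,s}^{2}\|u\|_{-s}\|f\|_{\sigma}^{2}.
\]
Interpolating $\|f\|_{\sigma}$ between $\|f\|$ and $\|f\|_{1/2}$ with interpolation parameter $\theta = 2\sigma = 1/2+s$ gives $\|f\|_{\sigma}\le \|f\|^{1/2-s}\|f\|_{1/2}^{1/2+s}$, hence
\[
|\langle u|f\bar f\rangle| \le C_{1,s}^{2}\|u\|_{-s}\|f\|_{1/2}^{1+2s}\|f\|^{1-2s}.
\]

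The last step is Young's inequality with the conjugate exponents $p=2/(1+2s)$ and $q=2/(1-2s)$, chosen so that $1+\alpha = q$. Writing the right-hand side as $(\mu^{-1}\|f\|_{1/2}^{1+2s})\cdot(\mu C_{1,s}^{2}\|u\|_{-s}\|f\|^{1-2s})$ and optimizing in $\mu>0$ to make the coefficient of $\|f\|_{1/2}^{2}$ exactly $1/2$ forces $\mu^{p} = 1+2s$, and a short computation gives a remainder term of the form $C(s)\,\|u\|_{-s}^{q}\|f\|^{2} = C(s)\,\|u\|_{-s}^{1+\alpha}\|f\|^{2}$. Finally, the bound $\|u\|_{-s}^{\alpha}\le (2(1+\|u\|_{-s}))^{\alpha}$ (uniform in $u$, trivially valid for any $u$) allows us to rewrite the constant in the exact form $\eta_{s}(\|u\|_{-s})=\|u\|_{-s}(2(1+\|u\|_{-s}))^{\alpha}C_{2,s}^{2}$, absorbing all the $s$-dependent factors into $C_{2,s}$.

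I do not anticipate any serious obstacle: the estimate in (i) is a textbook endpoint Sobolev product inequality, and everything in (ii) is forced by duality plus interpolation once (i) is in place. The only delicate point is the bookkeeping that turns the optimal Young constant into the specific form of $\eta_{s}$ stated in the lemma; the slightly inflated factor $2(1+\|u\|_{-s})$ in the statement (rather than just $\|u\|_{-s}$) is precisely what lets one absorb the lower-order contributions and obtain a single clean bound valid for all values of $\|u\|_{-s}$, not only in an asymptotic regime.
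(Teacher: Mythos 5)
Your part (ii) is essentially the paper's own argument: duality, the interpolation inequality $\|f\|_\sigma\le\|f\|_{1/2}^{1/2+s}\|f\|^{1/2-s}$, and Young's inequality with conjugate exponents $2/(1+2s)$ and $2/(1-2s)$; your optimization in $\mu$ followed by the crude inflation $\|u\|_{-s}^\alpha\le(2(1+\|u\|_{-s}))^\alpha$ reproduces exactly the paper's $\eta_s$, which is obtained there by choosing $\varepsilon=(2(1+\|u\|_{-s}))^{-1}$ in the Young step. Where you genuinely differ is part (i): the paper simply invokes standard paramultiplication estimates (Bony decomposition), whereas you prove the product estimate by hand via Cauchy--Schwarz on the convolution $\widehat{fg}(n)=\sum_k\widehat f(k)\widehat g(n-k)$ together with a Schur-type bound. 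This is more self-contained, and for $0<s<1/2$ it is correct: since $4\sigma=1+2s>1$ one indeed has $\sum_{k\in\Z}\langle k\rangle^{-2\sigma}\langle n-k\rangle^{-2\sigma}\le C_s\langle n\rangle^{-2s}$ uniformly in $n$.

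However, the lemma includes $s=0$, and there your key claim --- uniform boundedness of the Schur sum --- fails: for $2\sigma=1/2$ the sum $\sum_{k\in\Z}\langle k\rangle^{-1/2}\langle n-k\rangle^{-1/2}$ diverges (the summand behaves like $|k|^{-1}$ at infinity). For part (i) as literally stated this is rescued by the Hardy-space hypothesis: with $\widehat f,\widehat g$ supported in $\Z_{\ge0}$ the sum runs only over $0\le k\le n$ and is uniformly bounded by a Beta-function computation. But you never invoke this restriction, and, more importantly, it is unavailable in part (ii), where the product is $f\cdot\bar f$ and the relevant sum $\sum_{k\ge\max(0,n)}\langle k\rangle^{-1/2}\langle k-n\rangle^{-1/2}$ is again divergent. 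So at $s=0$ your route breaks down precisely where the lemma is applied. The fix is one line --- $\|f\bar f\|_0=\|f\|_{L^4}^2\le C\|f\|_{H^{1/4}}^2$ by the (non-endpoint) Sobolev embedding $H^{1/4}(\T)\subset L^4(\T)$ --- or one can fall back on the paraproduct estimates the paper cites, which cover $s=0$ without difficulty. For $0<s<1/2$ your proof is complete as written.
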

 \begin{proof}
 $(i)$ Estimate \eqref{para} is obtained from standard
 estimates of paramultiplication
 (cf. e.g. \cite[Exercise II.A.5]{AG}, 
 \cite[Theorem 2.82, Theorem 2.85]{BCD}).
 $(ii)$ By item $(i)$,
 $\langle  u | f \overline f \rangle$
 is well defined by duality and satisfies
 $$
 | \langle  u | f \overline f \rangle |
 \le \| u \|_{-s} \|f \overline f \|_s 
 \le  \| u \|_{-s} C_{1,s}^2 \| f\|_\sigma^2 \, .
 $$
 In order to estimate $\| f\|_\sigma^2$, note that 
 by interpolation one has  
$ \| f\|_\sigma \le \| f \|_{1/2}^{1/2 +s}
  \| f \|^{1/2 -s}$ and hence 
 \begin{equation}\label{interpolation}
  C_{1,s} \| f\|_\sigma \le \| f \|_{1/2}^{1/2 +s}
  \big( C_{2,s} \| f \| \big)^{1/2 -s}
 \end{equation}
 for some constant $C_{2,s}> 0$.
 Young's inequality then yields
 for any $\e > 0$
 \begin{equation}\label{Young}
 \big( C_{1,s} \| f\|_\sigma \big)^2 \le
 \e \| f \|_{1/2}^2 + 
 \e^{-\alpha} \big( C_{2,s} \| f \| \big)^2\, ,
 \qquad \alpha = \frac{1+2s}{1-2s} \, .
 \end{equation}
 Estimate \eqref{estimate Toeplitz} then
 follows from \eqref{Young} by
 choosing $\e = \big( 2 ( 1 + \|u\|_{-s}))^{-1}$. 
 \end{proof}
 Note that estimate \eqref{estimate Toeplitz} implies
 that the sesquilinear form $\langle T_u f | g \rangle $
 on $H^{1/2}_+$, 
 obtained from the Toeplitz operator $T_u f := \Pi (u f)$
 with symbol $u \in L^2_{r,0}$,
 can be defined  for any $u \in H^{-s}_{r,0}$ 
 with $0 \le s < 1/2$ by 
setting 
$\langle T_u f | g \rangle := 
\langle u | g \overline f  \rangle$ and that it is bounded.
 For any $u \in H^{-s}_{r,0},$ we then define the 
 sesquilinear form $Q_u^+$ on $H^{1/2}_+$ as follows
 \begin{equation}\label{Q_u^+}
 Q_u^+(f, g) : = \langle -i\partial_x f | g \rangle
 - \langle T_u f | g \rangle +
 \big( 1 + \eta_s(\|u\|_{-s}) \big) \langle f | g \rangle
 \end{equation}
 where $\eta_s(\|u\|_{-s})$ is given by \eqref{def eta_s}. 
 The following lemma says that the quadratic form 
 $Q^+_u(f, f)$ is equivalent to $\|f\|_{1/2}^2$.
 More precisely, the following holds.
 \begin{lemma}\label{comparison Q_u^+}
 For any $ u \in H^{-s}_{r,0}$ with $0 \le s < 1/2$,
 $Q^+_u$ is a positive, sesquilinear form,
 satisfying 
 $$
 \frac{1}{2} \| f \|_{1/2}^2 \le  Q^+_u(f, f) \le 
 \big( 3 + 2 \eta_s(\|u\|_{-s}) \big) \| f \|_{1/2}^2 \, ,
 \quad \forall f \in H^{1/2}_+\, .
 $$
 \end{lemma}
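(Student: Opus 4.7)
The strategy is to combine the bound on the Toeplitz form from Lemma \ref{estimate for T_u f}$(ii)$ with the elementary spectral identity for $-i\partial_x$ on the Hardy space, and to read off that the constant $\eta_s(\|u\|_{-s})$ in the definition \eqref{Q_u^+} has been chosen precisely so that the $\|f\|^2$ term in \eqref{estimate Toeplitz} is absorbed.

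Concretely, I would first record that for any $f \in H^{1/2}_+$, writing $f(x) = \sum_{n \ge 0}\widehat f(n) e^{inx}$,
\begin{equation*}
\langle -i\partial_x f | f \rangle + \langle f | f \rangle
= \sum_{n \ge 0}(n+1)|\widehat f(n)|^2,
\end{equation*}
which is sandwiched between $\|f\|_{1/2}^2$ and $2\|f\|_{1/2}^2$ since $\langle n\rangle \le n+1 \le 2\langle n\rangle$ for $n \ge 0$. In particular, the ``free'' part of $Q_u^+$ (everything except the Toeplitz contribution) satisfies
\begin{equation*}
\|f\|_{1/2}^2 + \eta_s(\|u\|_{-s})\|f\|^2 \;\le\; \langle -i\partial_x f|f\rangle + (1+\eta_s(\|u\|_{-s}))\|f\|^2 \;\le\; 2\|f\|_{1/2}^2 + \eta_s(\|u\|_{-s})\|f\|^2.
\end{equation*}

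Next, I would invoke Lemma \ref{estimate for T_u f}$(ii)$, which is already set up so that
\begin{equation*}
|\langle T_u f | f\rangle| = |\langle u | f \overline f\rangle|
\le \tfrac{1}{2}\|f\|_{1/2}^2 + \eta_s(\|u\|_{-s})\|f\|^2.
\end{equation*}
For the lower bound in the lemma, I subtract this from the free part: the $\eta_s(\|u\|_{-s})\|f\|^2$ terms cancel exactly, leaving $Q_u^+(f,f) \ge \|f\|_{1/2}^2 - \tfrac{1}{2}\|f\|_{1/2}^2 = \tfrac{1}{2}\|f\|_{1/2}^2$, which proves positivity and the left-hand inequality. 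For the upper bound, I add instead of subtract and use $\|f\|^2 \le \|f\|_{1/2}^2$, giving $Q_u^+(f,f) \le 2\|f\|_{1/2}^2 + \eta_s\|f\|^2 + \tfrac{1}{2}\|f\|_{1/2}^2 + \eta_s\|f\|^2 \le (\tfrac{5}{2} + 2\eta_s(\|u\|_{-s}))\|f\|_{1/2}^2$, which is stronger than, hence implies, the stated upper bound with constant $3 + 2\eta_s(\|u\|_{-s})$.

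There is no real obstacle here: once the quadratic form $Q_u^+$ is defined with the specific shift $1 + \eta_s(\|u\|_{-s})$, the estimate is just an algebraic combination of \eqref{estimate Toeplitz} and the elementary identity for $\langle -i\partial_x f|f\rangle + \|f\|^2$ on the Hardy space. The only point that deserves attention is verifying that $\langle T_u f|f\rangle$ is real-valued, which follows from the fact that $u$ is real (so that $\langle u | f\overline f\rangle = \overline{\langle u| f \overline f\rangle}$), ensuring that $Q_u^+(f,f)$ is itself real and the sesquilinearity of $Q_u^+$ matches a genuine symmetric form.
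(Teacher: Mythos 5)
Your proof is correct and follows essentially the same route as the paper: both combine the bound \eqref{estimate Toeplitz} on $|\langle T_u f\,|\,f\rangle|$ with the elementary comparison of $\langle -i\partial_x f\,|\,f\rangle + \|f\|^2$ against $\|f\|_{1/2}^2$ on the Hardy space, and observe that the shift $1+\eta_s(\|u\|_{-s})$ in \eqref{Q_u^+} is chosen exactly so the $\|f\|^2$ terms cancel in the lower bound. Your two-sided sandwich $\|f\|_{1/2}^2 \le \sum_{n\ge 0}(n+1)|\widehat f(n)|^2 \le 2\|f\|_{1/2}^2$ even yields the slightly sharper upper constant $5/2 + 2\eta_s$, which of course implies the stated one.
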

 \begin{proof}
 (i) Using that $u$ is real valued, one verifies that
 $Q_u^+$ is sesquilinear.
 The claimed estimates are obtained from 
 \eqref{estimate Toeplitz} as follows:
 since $\langle n \rangle \le 1 + |n|$
 one has
 $
 \| f \|_{1/2}^2 \le 
 \langle -i \partial_x f | f \rangle  + \| f \|^2$,
 and hence by \eqref{estimate Toeplitz},
 $$
 | \langle T_u f | f \rangle | \le 
 \frac{1}{2} \langle -i \partial_x f | f \rangle
 + \big(\frac{1}{2} + \eta_s(\|u\|_{-s}) \big)\| f \|^2 \, .
 $$
 By the definition \eqref{Q_u^+}, the claimed estimates 
 then follow.
 In particular, the lower bound for $Q_u^+(f, f)$ shows that
 $Q_u^+$ is positive.
 \end{proof}
 Denote by 
 $\langle f | g \rangle_{1/2} \equiv \langle f | g \rangle_{H^{1/2}_+}$
 the inner product, corresponding to the norm 
 $\| f \|_{1/2}$. It is given by
 $$
 \langle f |  g \rangle_{1/2} =
 \sum_{n \ge 0} \langle n \rangle 
 \widehat f(n) \overline{\widehat g (n)}
 \, , \quad \forall f, g \in H^{1/2}_+ \, .
 $$
 Furthermore, denote by 
 $D : H^t_+ \to H^{t-1}_+$ and
 $\langle D \rangle : H^t_+ \to H^{t-1}_+$, $t\in \R$, 
 the Fourier multipliers, defined for $f \in H^t_+$
 with Fourier series
 $f = \sum_{n= 0}^\infty \widehat f(n) e^{inx}$ by 
 $$
  D f := -i\partial_x f = \sum_{n = 0}^\infty 
 n  \widehat f(n) e^{inx} \, , \qquad
 \langle D \rangle f := \sum_{n = 0}^\infty 
 \langle n \rangle \widehat f(n) e^{inx} \, .
 $$
 \begin{lemma}\label{Lax Milgram}
 For any $ u \in H^{-s}_{r,0}$ with $0 \le s < 1/2$,
 there exists a bounded linear isomorphism 
 $A_u : H^{1/2}_+ \to H^{1/2}_+$ so that
 $$
 \langle A_u f | g \rangle_{1/2} =
 Q^+_u(f, g)\, , \quad \forall f, g \in H^{1/2}_+\, .
 $$
 The operator $A_u$ has the following properties:\\
 $(i)$ $A_u$ and its inverse $A_u^{-1}$ are symmetric,
 i.e., for any $f, g \in H^{1/2}_+,$
 $$
 \langle A_u f | g \rangle_{1/2} =
 \langle  f | A_u g \rangle_{1/2}\, , \quad
 \langle A_u^{-1} f | g \rangle_{1/2} =
 \langle  f | A_u^{-1}g \rangle_{1/2} \, .
 $$
 $(ii)$ The linear isomorphism $B_u$, given by the composition
 $$
 B_u := \langle D \rangle A_u : H^{1/2}_+ \to H^{-1/2}_+
 $$
 satisfies
 $$
Q_u^+ (f, g) = \langle B_u f | g \rangle \, , 
\quad \forall f, g \in H^{1/2}_+ \, .
 $$
 The operator norm of 
 $B_u$ and the one of its inverse can be bounded uniformly
 on bounded subsets of elements $u$ in $H^{-s}_{r,0}$.
 \end{lemma}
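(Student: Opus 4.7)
The strategy is to read off $A_u$ from the sesquilinear form $Q_u^+$ via the Riesz representation theorem and then deduce everything else from the two-sided bound $\tfrac12\|f\|_{1/2}^2 \le Q_u^+(f,f) \le (3+2\eta_s(\|u\|_{-s}))\|f\|_{1/2}^2$ established in Lemma \ref{comparison Q_u^+}. A preliminary observation is that $Q_u^+$ is Hermitian symmetric: the kinetic term $\langle -i\partial_x f|g\rangle$ is so by integration by parts, and the Toeplitz term $\langle T_u f|g\rangle = \langle u| g\bar f\rangle$ is so because $u$ is real, since $\overline{\langle u|g\bar f\rangle} = \langle u|f\bar g\rangle = \langle T_u g|f\rangle$.

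First I would construct $A_u$. For fixed $f\in H^{1/2}_+$ the map $g\mapsto Q_u^+(f,g)$ is a continuous antilinear functional on $H^{1/2}_+$, with continuity supplied by the upper bound in Lemma \ref{comparison Q_u^+}; Riesz representation on the Hilbert space $(H^{1/2}_+,\langle\cdot|\cdot\rangle_{1/2})$ therefore yields a unique $A_u f\in H^{1/2}_+$ satisfying $\langle A_u f|g\rangle_{1/2} = Q_u^+(f,g)$, and sesquilinearity of $Q_u^+$ makes $A_u$ a bounded linear operator of norm at most $3+2\eta_s(\|u\|_{-s})$. The Hermitian symmetry of $Q_u^+$ immediately gives $\langle A_u f|g\rangle_{1/2} = \langle f|A_u g\rangle_{1/2}$, proving symmetry of $A_u$. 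The coercivity bound yields
\[
\|A_u f\|_{1/2}\,\|f\|_{1/2} \ge \langle A_u f|f\rangle_{1/2} = Q_u^+(f,f) \ge \tfrac12\|f\|_{1/2}^2,
\]
so $A_u$ is injective with closed range and $\|A_u^{-1}\|_{H^{1/2}_+ \to H^{1/2}_+} \le 2$. Surjectivity follows from the self-adjointness: if $h$ is orthogonal to $\mathrm{Ran}(A_u)$ with respect to $\langle\cdot|\cdot\rangle_{1/2}$, then $0 = \langle A_u f|h\rangle_{1/2} = Q_u^+(f,h)$ for every $f$, and taking $f=h$ forces $h=0$ by coercivity. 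Symmetry of $A_u^{-1}$ is then automatic.

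For item $(ii)$ I would use that $\langle D\rangle : H^{1/2}_+\to H^{-1/2}_+$ is an isometric isomorphism and that for $h,g\in H^{1/2}_+$ the extended $L^2$ pairing matches the $H^{1/2}_+$ inner product: $\langle \langle D\rangle h|g\rangle = \sum_{n\ge 0}\langle n\rangle \widehat h(n)\overline{\widehat g(n)} = \langle h|g\rangle_{1/2}$. Taking $h = A_u f$ gives the identity $\langle B_u f|g\rangle = Q_u^+(f,g)$, and the decomposition $B_u^{-1} = A_u^{-1}\langle D\rangle^{-1}$ transfers the bounds on $A_u^{\pm 1}$ to bounds on $B_u^{\pm 1}$. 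Uniformity on bounded subsets of $u\in H^{-s}_{r,0}$ is then immediate from the monotonicity of $\eta_s(\|u\|_{-s})$ in $\|u\|_{-s}$ visible in the explicit formula \eqref{def eta_s}. There is no serious obstacle here: all analytic content has been packaged into Lemma \ref{comparison Q_u^+}, and the remainder is the standard Lax--Milgram machinery together with bookkeeping of constants.
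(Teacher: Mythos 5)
Your proposal is correct and follows essentially the same route as the paper: both are the standard Lax--Milgram argument built on the two-sided bound of Lemma \ref{comparison Q_u^+}, with the Riesz representation theorem producing $A_u$ and the identity $\langle \langle D\rangle h|g\rangle=\langle h|g\rangle_{1/2}$ giving item $(ii)$. The only cosmetic difference is that the paper constructs the inverse explicitly by a second application of Fr\'echet--Riesz (producing an operator $E_u$ with $A_uE_u=\mathrm{id}$), whereas you obtain surjectivity from coercivity via closed range plus triviality of the orthogonal complement of the range; both are standard and equally valid.
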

 \begin{proof}
 By Lemma \ref{comparison Q_u^+}, the 
 sesquilinear form
 $Q_u^+$ is an inner product on $H^{1/2}_+$, equivalent to
 the inner product $\langle \cdot | \cdot \rangle_{1/2}$.
 Hence by the theorem of Fr\'echet-Riesz, for any 
 $g \in H^{1/2}_+$, there exists a unique element in 
 $H^{1/2}_+$, which we denote by $A_ug$, so that
 $$
 \langle A_u g | f \rangle_{1/2} =
 Q^+_u(g, f)\, , \quad  \forall f \in H^{1/2}_+\, .
 $$Invitation for special issue in honor of Tony Bloch in Journal  of Geometric Mechanics
 Then $A_u : H^{1/2}_+ \to H^{1/2}_+$ is a linear, injective
 operator, which by Lemma \ref{comparison Q_u^+} is bounded,
 i.e., for any $f, g \in H^{1/2}_+$,
 \begin{align}
 | \langle A_u g |  f \rangle_{1/2} \ | & =
 | Q^+_u(g, f) | \le 
 Q^+_u(g, g)^{1/2} Q^+_u(f, f)^{1/2} \nonumber \\
 & \le \big( 3 + 2 \eta_s(\|u\|_{-s}) \big) 
 \|g\|_{1/2} \| f \|_{1/2} \nonumber \, ,
\end{align}
implying that 
$\| A_u g \|_{1/2} \le 
\big( 3 + 2 \eta_s(\|u\|_{-s}) \big) \| g \|_{1/2}$.

Similarly, by the theorem of Fr\'echet-Riesz, for any 
 $h \in H^{1/2}_+$, there exists a unique element in 
 $H^{1/2}_+$, which we denote by $E_uh$, so that
$$
 \langle h | f \rangle_{1/2} =
 Q^+_u(E_u h, f)\, , \quad  \forall f \in H^{1/2}_+\, .
 $$
 Then $E_u : H^{1/2}_+ \to H^{1/2}_+$ is a linear, injective
 operator, which by Lemma \ref{comparison Q_u^+} is bounded, i.e.,
 $$
 \frac{1}{2} \|E_u h\|_{1/2}^2 \le 
  Q^+_u(E_u h, E_u h) =
  \langle h | E_u h \rangle_{1/2} \le 
  \|h \|_{1/2} \| E_u h \|_{1/2}\, ,
 $$
 implying that  
 $\|E_u h\|_{1/2} \le 2 \| h\|_{1/2}$.
Note that $A_u(E_u h) = h$ and hence $E_u$ is the inverse
of $A_u$. Therefore, $A_u : H^{1/2}_+ \to H^{1/2}_+$
is a bounded linear isomorphism. Next we show item $(i)$.
For any $f, g \in H^{1/2}_+$,
$$
 \langle g | A_u f \rangle_{1/2} =
 \overline{ \langle A_u f |  g \rangle}_{1/2} =
 \overline{Q_u^+ (f, g)} =
 Q_u^+(g, f) = \langle A_u g | f \rangle_{1/2} \, .
$$
The symmetry of $A_u^{-1}$ is proved in the same way.
Towards item $(ii)$, note that for any $f, g \in H^{1/2}_+,$
$\langle f |  g \rangle_{1/2} = 
\langle \langle D \rangle f | g \rangle$
and therefore
$$
 \langle A_u g |  f \rangle_{1/2} =
  \langle \langle D \rangle A_u g |  f \rangle\, ,
$$
implying that the operator 
$B_u = \langle D \rangle A_u : H^{1/2}_+ \to H^{-1/2}_+$  
is a bounded linear isomorphism and that
$$
\langle B_u g |  f \rangle = Q_u^+(g, f) \, , \quad
\forall g,f \in H^{1/2}_+ \, . 
$$
The last statement of (ii) follows from Lemma \ref{comparison Q_u^+}.
\end{proof}

We denote by $L_u^+$ the restriction
 of $B_u$ to ${\rm{dom}}(L_u^+)$, defined as
 $$
 {\rm{dom}}(L_u^+) := \{ g \in H^{1/2}_+ \, : \,  
 B_u g \in H_+\} \, .
 $$
 We view $L^+_u$ as an unbounded linear operator on $H_+$
 and write
 $L^+_u : {\rm{dom}}( L_u^+) \to H_+$.
 \begin{lemma}\label{operator L^+_u}
 For any $ u \in H^{-s}_{r,0}$ with $0 \le s < 1/2$,
 the following holds:\\
 $(i)$ ${\rm{dom}}(L_u^+)$ is a dense subspace of $H^{1/2}_+$
 and hence of $H_+$.\\
 $(ii)$ $L_u^+ : {\rm{dom}}( L_u^+) \to H_+$ is bijective
 and the right inverse of $L^+_u$,
 $(L^+_u)^{-1} : H_+ \to H_+$,
 is compact. Hence $L_u^+$ has discrete spectrum.\\
 $(iii)$ $(L^+_u)^{-1}$ is symmetric and $L^+_u$ is self-adjoint
 and positive.
 \end{lemma}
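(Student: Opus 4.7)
The strategy is to read off everything from the bounded isomorphism $B_u : H^{1/2}_+ \to H^{-1/2}_+$ produced in Lemma~\ref{Lax Milgram}: by construction $L^+_u$ is the restriction of $B_u$ to the preimage $B_u^{-1}(H_+) \subset H^{1/2}_+$, so the three claimed properties should all reduce to bookkeeping about $B_u$ combined with the Hermitian symmetry of the form $Q^+_u$.

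For item $(i)$, since $B_u$ is a bijection onto $H^{-1/2}_+$, one has ${\rm{dom}}(L^+_u) = B_u^{-1}(H_+)$. The subspace $H_+$ is dense in $H^{-1/2}_+$ and $B_u^{-1}$ is continuous, so ${\rm{dom}}(L^+_u)$ is dense in $H^{1/2}_+$; via the continuous embedding $H^{1/2}_+ \hookrightarrow H_+$, it is dense in $H_+$ as well. For item $(ii)$, bijectivity of $L^+_u : {\rm{dom}}(L^+_u) \to H_+$ is inherited from that of $B_u$: every $h \in H_+ \subset H^{-1/2}_+$ has the unique preimage $g = B_u^{-1} h \in H^{1/2}_+$, which by definition lies in ${\rm{dom}}(L^+_u)$. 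The inverse then factors as
\[
(L^+_u)^{-1} \, : \, H_+ \hookrightarrow H^{-1/2}_+ \xrightarrow{\,B_u^{-1}\,} H^{1/2}_+ \hookrightarrow H_+,
\]
where the final embedding is compact by Rellich's theorem, so $(L^+_u)^{-1}$ is a compact operator on $H_+$. Discreteness of the spectrum of $L^+_u$ follows at once.

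For item $(iii)$, I would first observe that the form $Q^+_u$ is Hermitian, i.e., $Q^+_u(f, g) = \overline{Q^+_u(g, f)}$ for all $f, g \in H^{1/2}_+$. The differential and mass terms in \eqref{Q_u^+} are obviously Hermitian, while the Toeplitz term satisfies
\[
\overline{\langle T_u g | f \rangle} = \overline{\langle u | f \overline g \rangle} = \langle u | g \overline f \rangle = \langle T_u f | g \rangle
\]
because $u$ is real-valued. Consequently, for any $f, g \in {\rm{dom}}(L^+_u)$,
\[
\langle L^+_u f | g \rangle = Q^+_u(f, g) = \overline{Q^+_u(g, f)} = \overline{\langle L^+_u g | f \rangle} = \langle f | L^+_u g \rangle,
\]
so $L^+_u$ is symmetric; taking $g = f$ and invoking the lower bound of Lemma~\ref{comparison Q_u^+} yields $\langle L^+_u f | f \rangle \ge \tfrac{1}{2}\|f\|_{1/2}^2 \ge 0$, i.e., $L^+_u$ is positive. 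This symmetry transfers to the bounded, everywhere-defined inverse $(L^+_u)^{-1}$ on $H_+$, and any bounded symmetric operator on a Hilbert space is automatically self-adjoint; hence $(L^+_u)^{-1}$ is self-adjoint, and being injective with dense range (namely ${\rm{dom}}(L^+_u)$), its operator-theoretic inverse $L^+_u$ is self-adjoint as well. I do not anticipate any real obstacle here, since Lemma~\ref{Lax Milgram} has already done the substantive work; the argument is a direct application of the classical form-methods framework in this concrete setting.
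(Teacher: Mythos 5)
Your proposal is correct and follows essentially the same route as the paper: items $(i)$ and $(ii)$ are identical (density via $B_u^{-1}(H_+)$, compactness via the factorization through the Rellich embedding $H^{1/2}_+\hookrightarrow H_+$). In $(iii)$ you prove symmetry of $L^+_u$ directly from the Hermitian character of $Q^+_u$ and then transfer it to $(L^+_u)^{-1}$, whereas the paper verifies symmetry of $(L^+_u)^{-1}$ first using the symmetry of $A_u^{-1}$ in the $H^{1/2}_+$ inner product; both then invoke that a bounded, everywhere-defined symmetric operator is self-adjoint and that the inverse of an injective self-adjoint operator with dense range is self-adjoint, so the difference is purely cosmetic.
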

 \begin{proof}
 $(i)$ Since $H_+$ is a dense subspace of $H^{-1/2}_+$
 and $B_u^{-1} : H^{-1/2}_+ \to H^{1/2}_+$ 
 is a linear isomorphism, 
 ${\rm{dom}}(L_u^+) = B_u^{-1}(H_+)$ is a dense subspace
 of $H^{1/2}_+$, and hence also of $H_+$.\\
 $(ii)$ Since $L^+_u$ is the restriction of 
 the linear isomorphism $B_u$,
 it is one-to-one. By the definition of $L^+_u$, it is onto.
 The right inverse of $L_u^+$, denoted by $(L_u^+)^{-1}$, 
 is given by the composition
 $\iota \circ B_u^{-1}|_{H_+}$, where
 $\iota: H^{1/2}_+ \to H_+$ is the standard embedding
 which by Sobolev's embedding theorem is compact.
 It then follows that $(L_u^+)^{-1}: H_+ \to H_+$
 is compact as well.\\
 $(iii)$ For any $f, g \in H_+$
 $$
 \langle (L_u^+)^{-1} f | g \rangle =
 \langle A_u^{-1}\langle D \rangle ^{-1}  f | g \rangle =
 \langle A_u^{-1}\langle D  \rangle ^{-1}  f | 
 \langle D  \rangle ^{-1} g \rangle_{1/2} \, .
 $$
 By Lemma \ref{Lax Milgram}, $A_u^{-1}$ is symmetric with respect to the
 $H^{1/2}_+-$inner product. Hence 
 $$
 \langle (L_u^+)^{-1} f | g \rangle =
 \langle \langle D  \rangle ^{-1}  f | 
 A_u^{-1} \langle D \rangle ^{-1} g \rangle_{1/2}
 =  \langle f | (L_u^+)^{-1} g \rangle\, ,
 $$
 showing that $(L_u^+)^{-1}$ is symmetric.
 Since in addition, $(L_u^+)^{-1}$ is bounded
 it is also self-adjoint.
 By Lemma \ref{comparison Q_u^+} it then follows that
 $$
  \langle L_u^+ f | f \rangle = 
 \langle  \langle D \rangle A_u f | f \rangle=
 \langle A_u f | f \rangle_{1/2}=
 Q^+_u(f, f) \ge \frac{1}{2} \| f \|_{1/2}^2\, ,
 $$
 implying that $ L_u^+$ is a positive operator.
 \end{proof}
 We now define for any $ u \in H^{-s}_{r,0}$ 
 with $0 \le s < 1/2$, the operator $L_u$ 
 as a linear operator with domain 
 ${\rm{dom}}(L_u) := {\rm{dom}}(L_u^+)$
 by setting
 $$
 L_u := L_u^+ - \big( 1 + \eta_s(\|u\|_s) \big) :
 {\rm{dom}}(L_u) \to H_+ \, .
 $$
 Lemma \ref{operator L^+_u} yields the following
 \begin{corollary}\label{definition L_u}
 For any $ u \in H^{-s}_{r,0}$ with $0 \le s < 1/2$,
 the operator $L_u : {\rm{dom}}(L_u) \to H_+$
 is densely defined, self-adjoint, bounded from below,
 and has discrete spectrum. It thus admits an $L^2-$normalized
 basis of eigenfunctions, contained in ${\rm{dom}}(L_u)$
 and hence in $H^{1/2}_+$.
 \end{corollary}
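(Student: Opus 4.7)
The plan is to obtain the corollary as an essentially immediate consequence of Lemma \ref{operator L^+_u}, observing that $L_u$ differs from $L_u^+$ only by the real scalar $c := 1 + \eta_s(\|u\|_{-s})$. First, since the domain is defined as ${\rm dom}(L_u) := {\rm dom}(L_u^+)$ and the latter is dense in $H_+$ by Lemma \ref{operator L^+_u}$(i)$, density is automatic. Second, since subtracting a real multiple of the identity preserves self-adjointness of an unbounded operator (the graphs merely shift), the self-adjointness of $L_u^+$ asserted in Lemma \ref{operator L^+_u}$(iii)$ transfers directly to $L_u$.

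Next I would handle the lower bound and the spectrum. By Lemma \ref{operator L^+_u}$(iii)$, $\langle L_u^+ f | f\rangle \ge \tfrac{1}{2}\|f\|_{1/2}^2 \ge \tfrac{1}{2}\|f\|^2$ for all $f \in {\rm dom}(L_u^+)$, hence
\[
\langle L_u f | f\rangle \ge \bigl(\tfrac{1}{2} - c\bigr) \|f\|^2,
\]
so $L_u$ is bounded from below. For the spectrum, $(L_u^+)^{-1}: H_+ \to H_+$ is compact and self-adjoint by Lemma \ref{operator L^+_u}$(ii)$--$(iii)$, so by the spectral theorem for compact self-adjoint operators on a Hilbert space it admits an $L^2$-orthonormal basis of eigenfunctions $(f_n)_{n \ge 0}$ in $H_+$ with eigenvalues $\mu_n \to 0^+$. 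Each $f_n$ then lies in the range of $(L_u^+)^{-1}$, which equals ${\rm dom}(L_u^+) = {\rm dom}(L_u) \subseteq H^{1/2}_+$, and satisfies $L_u^+ f_n = \mu_n^{-1} f_n$, hence $L_u f_n = (\mu_n^{-1} - c) f_n$. The eigenvalues $\mu_n^{-1} - c$ form a sequence tending to $+\infty$, yielding the discreteness of the spectrum of $L_u$ and the desired eigenbasis.

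There is really no serious obstacle here: all the hard analytic work (the quadratic form estimate, the Lax--Milgram construction of $A_u$ and its inverse, compactness of $(L_u^+)^{-1}$ via Sobolev's embedding, and the symmetry of $A_u^{-1}$) has already been carried out in Lemmas \ref{estimate for T_u f}--\ref{operator L^+_u}. The only point worth stating carefully is that a shift by a bounded self-adjoint operator (here, a real scalar) preserves both self-adjointness and the discreteness of the spectrum, so the spectral properties of $L_u^+$ pass verbatim to $L_u$, merely translating the eigenvalues by $-c$.
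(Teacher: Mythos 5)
Your proposal is correct and follows exactly the route the paper intends: the paper states the corollary as an immediate consequence of Lemma \ref{operator L^+_u}, and your elaboration (domain equality gives density, a real scalar shift preserves self-adjointness, the form bound gives semiboundedness, and the compact self-adjoint inverse $(L_u^+)^{-1}$ gives the eigenbasis with eigenvalues $\mu_n^{-1}-c \to +\infty$) is precisely the standard argument being invoked. No gaps.
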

 \begin{remark}\label{symmetry of B_u}
 Let $ u \in H^{-s}_{r,0}$ with $0 \le s < 1/2$ be given.
 Since ${\rm{dom}}(L^+_u)$ is dense in $H^{1/2}_+$
 and $L^+_u$ is the restriction of 
 $B_u : H^{1/2}_+ \to H^{-1/2}_+$ to ${\rm{dom}}(L^+_u)$,
 the symmetry
 $$
 \langle L_u^+  f | g \rangle = \langle f | L_u^+  g \rangle 
 \, , \quad \forall f, g \in \rm{dom}(L^+_u)
 $$
 can be extended by a straightforward density argument
 as follows
 $$
  \langle B_u  f | g \rangle = \langle f | B_u  g \rangle 
 \, , \quad \forall f, g \in H^{1/2}_+ \, .
 $$
 \end{remark}
Note that for any $f, g \in H^{1/2}_+$, 
$\langle B_u  f | g \rangle =
\langle \langle D \rangle A_u f | g \rangle =
\langle A_u f | g \rangle_{1/2}$ 
and hence by \eqref{Q_u^+},
$$
\langle B_u  f | g \rangle =
Q_u^+(f, g) = 
\langle Df - T_uf + (1 + \eta_s(\|u\|_{-s}))f | g \rangle \, ,
$$ 
yielding the following identity in $H^{-1/2}_+$,
\begin{equation}\label{identity B_u}
B_u  f = Df - T_uf + (1 + \eta_s(\|u\|_{-s})) f\, ,
\quad \forall f \in H^{1/2}_+ \, .
\end{equation}
 Given $ u \in H^{-s}_{r,0}$ with $0 \le s < 1/2$, 
 let us consider the restriction of $B_u$ to $H^{1-s}_+$.
 \begin{lemma}\label{restriction of B_u to H^{1-s}}
 Let $ u \in H^{-s}_{r,0}$ with $0 \le s < 1/2$.
 Then $B_u(H^{1-s}_+) = H^{-s}_+$ and the 
 restriction
 $B_{u; 1-s}  :=B_u|_{H^{1-s}_+} : H^{1-s}_+ \to H^{-s}_+$
 is a linear isomorphism. The operator norm of
 $B_u|_{H^{1-s}_+}$ 
 and the one of its inverse are bounded uniformly
 on bounded subsets of elements $u \in H^{-s}_{r,0}$.
 \end{lemma}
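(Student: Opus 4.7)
The plan is to work directly from the explicit expression
\begin{equation*}
B_u f = Df - T_uf + (1+\eta_s(\|u\|_{-s}))\,f,
\end{equation*}
established in \eqref{identity B_u}, and to combine it with a product estimate of the form $\|uf\|_{-s}\le C\|u\|_{-s}\|f\|_{1-s}$ valid for $0\le s<1/2$. The latter is obtained by duality: for $\phi\in H^s$, Lemma \ref{estimate for T_u f}$(i)$ (applied in the borderline case, together with the standard multiplier inequality $\|\phi f\|_s\lesssim \|\phi\|_s\|f\|_{1-s}$ which holds for $0\le s\le 1/2$ since $s+(1-s)=1\ge s+\tfrac12$) shows that $\phi f\in H^s$ with the corresponding bound, and then $|\langle uf,\phi\rangle|=|\langle u, \overline\phi f\rangle|\le\|u\|_{-s}\|\phi f\|_s$.

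First I would verify boundedness of the restriction. Since $1-s>1/2>-s$, the operators $D:H^{1-s}_+\to H^{-s}_+$ and the identity $H^{1-s}_+\hookrightarrow H^{-s}_+$ are bounded, and by the product estimate above $T_u=\Pi(u\,\cdot):H^{1-s}_+\to H^{-s}_+$ has norm $\lesssim\|u\|_{-s}$. Adding these contributions yields a bound for $\|B_{u;1-s}\|_{\mathcal L(H^{1-s}_+,H^{-s}_+)}$ depending only on $\|u\|_{-s}$, which is uniform on bounded subsets of $H^{-s}_{r,0}$.

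Next I would establish bijectivity. Injectivity is immediate from $H^{1-s}_+\subset H^{1/2}_+$ and $H^{-s}_+\subset H^{-1/2}_+$ (since $s<1/2$), together with the fact that $B_u:H^{1/2}_+\to H^{-1/2}_+$ is an isomorphism by Lemma \ref{Lax Milgram}$(ii)$. For surjectivity, given $g\in H^{-s}_+\subset H^{-1/2}_+$, let $f=B_u^{-1}g\in H^{1/2}_+$. A bootstrap using \eqref{identity B_u} gives
\begin{equation*}
Df = g + T_uf - (1+\eta_s(\|u\|_{-s}))\,f;
\end{equation*}
the last term lies in $H^{1/2}_+\subset H^{-s}_+$, and the product estimate applied with $f\in H^{1/2}_+\subset H^{1-s}_+$ is not yet available, so instead I use the variant $\|uf\|_{-s}\lesssim \|u\|_{-s}\|f\|_{1/2}$ (valid in the same way since $H^s\cdot H^{1/2}\hookrightarrow H^s$ for $0\le s\le 1/2$). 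Hence $Df\in H^{-s}_+$; since on $H_+$ the inverse of $D$ gains one derivative away from the mean, we conclude $f\in H^{1-s}_+$ with $\|f\|_{1-s}\lesssim \|f\|_{1/2}+\|Df\|_{-s}$.

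Finally, to obtain uniform bounds on $\|B_{u;1-s}^{-1}\|$ over bounded sets, I chain the above estimates: $\|f\|_{1/2}\le \|B_u^{-1}\|_{\mathcal L(H^{-1/2}_+,H^{1/2}_+)}\|g\|_{-1/2}\le \|B_u^{-1}\|_{\mathcal L}\|g\|_{-s}$, and then $\|Df\|_{-s}\le \|g\|_{-s}+C(\|u\|_{-s})\|f\|_{1/2}$, both controlling factors being bounded uniformly on bounded subsets of $H^{-s}_{r,0}$ thanks to Lemma \ref{Lax Milgram}$(ii)$. The main technical obstacle here is the product estimate in Sobolev spaces of opposite signs: everything reduces to verifying the bilinear bound $\|uf\|_{-s}\lesssim \|u\|_{-s}\|f\|_{1-s}$ uniformly in $s\in[0,1/2)$, which is precisely the endpoint allowed on $\T$ and is what forces the restriction $s<1/2$.
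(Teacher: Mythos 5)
Your overall architecture matches the paper's: boundedness of $B_u|_{H^{1-s}_+}$ because $H^{1-s}$ with $1-s>1/2$ multiplies $H^{-s}$; injectivity inherited from the isomorphism $B_u:H^{1/2}_+\to H^{-1/2}_+$ of Lemma \ref{Lax Milgram}; surjectivity by bootstrapping the identity $Df = g + T_uf - (1+\eta_s(\|u\|_{-s}))f$ starting from $f=B_u^{-1}g\in H^{1/2}_+$. However, the single step on which the whole bootstrap rests is wrong: the ``variant'' estimate $\|uf\|_{-s}\lesssim \|u\|_{-s}\|f\|_{1/2}$, i.e.\ the multiplier bound $H^s\cdot H^{1/2}\hookrightarrow H^s$, is precisely the forbidden endpoint of the Sobolev product theorem on $\T$ (the exponents satisfy $s+\tfrac12-s=\tfrac12=\tfrac d2$ with one factor \emph{at} regularity $d/2$, not below it). It is false already at $s=0$: $\|uf\|_{0}\le C\|u\|_{0}\|f\|_{1/2}$ for all $u\in L^2$ would force $H^{1/2}(\T)\subset L^\infty(\T)$, which fails (take $\widehat f(n)=1/(n\log n)$, $n\ge 2$). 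So knowing only $f\in H^{1/2}_+$ you cannot conclude $T_uf\in H^{-s}_+$ in one shot, and your surjectivity argument (and hence the uniform bound on the inverse) does not close.

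The paper circumvents exactly this obstruction by a two-step bootstrap through the intermediate exponent $\sigma=(1/2+s)/2<1/2$. Lemma \ref{estimate for T_u f}$(i)$, which is the legitimate product estimate $H^\sigma\cdot H^\sigma\to H^{2\sigma-1/2}=H^{s}$ with \emph{both} factors strictly below $1/2$, gives $|\langle T_uf\,|\,g\rangle|\le C_{1,s}^2\|u\|_{-s}\|g\|_\sigma\|f\|_\sigma$ for $g\in H^\sigma_+$, hence only $T_uf\in H^{-\sigma}_+$; the identity then yields $f\in H^{1-\sigma}_+$ with $1-\sigma=3/4-s/2>1/2$. Only at this point does multiplication by $f$ act on $H^{-s}$ (since $s\le 1-\sigma$), giving $T_uf\in H^{-s}_+$ and finally $f\in H^{1-s}_+$, with all constants controlled by $\|u\|_{-s}$. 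Your proof becomes correct if you replace the false endpoint bound by this intermediate-regularity step; as written, there is a genuine gap.
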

 \begin{proof}
 Since $1-s>1/2$, $H^{1-s}$ acts by multiplication on itself and on $L^2$, hence  by interpolation on $H^r$ for $0\leq r\leq 1-s$. 
 By duality, it also acts on $H^{-r}$, in particular 
 with $r=s$. This implies that  $B_u|_{H_+^{1-s}} : H_+^{1-s} \to H_+^{-s}$ is bounded.
 Being the restriction of an injective operator, it is
 injective as well. Let us prove that $B_u|_{H_+^{1-s}}$ has
 $H_+^{-s}$ as its image. To this end consider an arbitrary
 element $h \in H_+^{-s}$. We need to show that the
 solution $f\in H^{1/2}_+$ of $B_u f = h$ 
 is actually in $H^{1-s}_+$. Write
 \begin{equation}\label{identity for f}
 Df  = h +  (1 + \eta_s(\|u\|_{-s})) f + T_uf\, .
 \end{equation}
 Note that $h +  (1 + \eta_s(\|u\|_{-s})) f$ is in $H^{-s}_+$
 and it remains to study $T_uf$.
 By Lemma \ref{estimate for T_u f}$(i)$ one infers
 that for any $g \in H^\sigma_+$, with $\sigma =(1/2+s)/2$, 
 $$
| \langle T_u f | g \rangle | = 
| \langle u  | g \overline f \rangle | \le 
\|u \|_{-s} \|g \overline f\|_{s} \le 
C_{1,s}^2 \|u\|_{-s} \| g \|_{\sigma} \| f \|_{\sigma}\, ,
 $$
 implying that $T_u f \in H^{-\sigma}_+$ and hence by
 \eqref{identity for f}, $f \in H^{1-\sigma}$.
 Since $1-\sigma >1/2$,  we argue as at the beginning of the proof 
to infer that $T_u f \in H^{-s}_+$. Thus applying
 \eqref{identity for f} once more  we conclude that 
  $f \in H^{1-s}_+$. This shows that 
  $B_u|_{H_+^{1-s}} : H_+^{1-s} \to H_+^{-s}$ is onto.
  Going through the arguments of the proof one verifies
  that the operator norm of $B_u|_{H^{1-s}_+}$ 
 and the one of its inverse are bounded uniformly
 on bounded subsets of elements $u \in H^{-s}_{r,0}$.
  This completes the proof of the lemma.
 \end{proof}
 Lemma \ref{restriction of B_u to H^{1-s}} has
 the following important 
 \begin{corollary}
 For any $ u \in H^{-s}_{r,0}$ with $0 \le s < 1/2$,
 ${\rm{dom}}(L_u^+) \subset H^{1-s}_+$. In particular,
 any eigenfunction of $L_u^+$ (and hence of $L_u$)
  is in $H^{1-s}_+$.
 \end{corollary}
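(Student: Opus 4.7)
The plan is to deduce the corollary directly from the injectivity of $B_u$ on $H^{1/2}_+$ (Lemma \ref{Lax Milgram}) together with the surjectivity statement in Lemma \ref{restriction of B_u to H^{1-s}}. I expect no serious obstacle here; the argument is really a short elliptic regularity bootstrap that exploits the fact that $1-s > 1/2$, so the smaller space $H^{1-s}_+$ still sits inside the form domain $H^{1/2}_+$ where $B_u$ is known to be one-to-one.

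Concretely, I would pick an arbitrary $g \in \mathrm{dom}(L_u^+)$. By the definition of the domain, $g \in H^{1/2}_+$ and $h := B_u g \in H_+$. Since $H_+ \subset H^{-s}_+$, Lemma \ref{restriction of B_u to H^{1-s}} supplies a unique $\tilde g \in H^{1-s}_+$ with $B_u \tilde g = h$. Because $1-s > 1/2$, we have $\tilde g \in H^{1/2}_+$ as well, and the linear isomorphism $B_u : H^{1/2}_+ \to H^{-1/2}_+$ of Lemma \ref{Lax Milgram} forces $g = \tilde g$. Hence $g \in H^{1-s}_+$, proving $\mathrm{dom}(L_u^+) \subset H^{1-s}_+$.

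For the statement about eigenfunctions, if $f \in H_+$ satisfies $L_u^+ f = \mu f$ with $\mu \in \R$, then by the definition of $L_u^+$ we have $f \in \mathrm{dom}(L_u^+)$, and the inclusion just proved yields $f \in H^{1-s}_+$. Since $L_u$ differs from $L_u^+$ only by the additive constant $-(1+\eta_s(\|u\|_{-s}))$, eigenfunctions of $L_u$ are precisely the eigenfunctions of $L_u^+$, and therefore lie in $H^{1-s}_+$ as well. This completes the proof.
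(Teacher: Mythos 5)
Your proof is correct and follows essentially the same route as the paper: the paper simply writes ${\rm dom}(L_u^+) = B_u^{-1}(H_+) \subset B_u^{-1}(H^{-s}_+) \subset H^{1-s}_+$, invoking Lemma \ref{restriction of B_u to H^{1-s}}, and your explicit uniqueness step (identifying $g$ with $\tilde g$ via the injectivity of $B_u$ on $H^{1/2}_+$, using $1-s>1/2$) just unpacks what that inclusion means. No issues.
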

 \begin{proof}
 Since $H_+ \subset H^{-s}_+,$ one has 
 $B_u^{-1}(H_+) \subset B_u^{-1}(H^{-s}_+)$
 and hence by Lemma \ref{restriction of B_u to H^{1-s}},
${\rm{dom}}(L_u^+) = B_u^{-1}(H_+) 
\subset  H^{1-s}_+$.
\end{proof}
With the results obtained so far, it is straightforward 
to verify that many of the results of \cite{GK}
extend to the case where $u \in H^{-s}_{r,0}$.
More precisely, let $u \in H^{-s}_{r,0}$ with $0 \le s < 1/2$.
We already know that the spectrum of $L_u$ is discrete, 
bounded from below, and real. When listed 
in increasing order and with their multiplicities,
the eigenvalues of $L_u$ satisfy
$\lambda_0 \le \lambda_1 \le \lambda_2 \le \cdots $.
Arguing as in the proof of \cite[Proposition 2.1]{GK}, 
one verifies that $\lambda_n \ge \lambda_{n-1}+ 1$, $n \ge 1$,
and following \cite[(2.10)]{GK} we define
$$
\gamma_n(u) := \lambda_n - \lambda_{n-1} - 1 \ge 0\, .
$$
It then follows that for any $n \ge 1$,
$$
\lambda_n  = n + \lambda_0 + 
\sum_{k=1}^n\gamma_k \ge n + \lambda_0\, .
$$
Since \cite[Lemma 2.1, Lemma 2.2]{GK}
continue to hold for $u \in H^{-s}_{r,0}$, 
we can introduce eigenfunctions $f_n(x, u)$
of $L_u$, corresponding to the eigenvalues $\lambda_n$, which
are normalized as in \cite[Definition 2.1]{GK}. 
The identities \cite[(2.13)]{GK} continue to hold, 
\begin{equation}\label{formula coeff of u}
\lambda_n \langle 1 | f_n \rangle = - \langle u | f_n \rangle
\end{equation}
as does \cite[Lemma 2.4]{GK}, stating that $\gamma_n = 0$
if and only if $\langle 1 | f_n \rangle = 0.$
Furthermore, the definition \cite[(3.1)]{GK} of the
generating function $\mathcal H_\lambda(u)$
extends to the case where $u \in H^{-s}_{r,0}$
with $0 < s < 1/2$,
\begin{equation}\label{definition generation fucntion}
 \mathcal H_\lambda: H^{-s}_{r,0} \to \C, \, u \mapsto 
\langle (L_u + \lambda)^{-1} 1 | 1 \rangle \, .
\end{equation}
and so do the identity \cite[(3.2)]{GK},
the product representation of $\mathcal H_\lambda(u)$,
stated in \cite[Proposition 3.1(i)]{GK}, 
\begin{equation}\label{generating function for -s}
\mathcal H_\lambda (u)=\frac{1}{\lambda_0+\lambda}\prod_{n=1}^\infty \big(1-\frac{\gamma_n}{\lambda_n+\lambda}   \big) \, ,
\end{equation}
and the one for $|\langle 1 | f_n \rangle |^2$,
$n \ge 1$, given in \cite[Corollary 3.1]{GK}, 
 \begin{equation}\label{product for kappa for -s}
 | \langle 1 | f_n \rangle |^2 = \gamma_n \kappa_n\, , \quad
\kappa_n = \frac{1}{\lambda_n - \lambda_0}
\prod_{p \ne n} (1 - \frac{\gamma_p}{\lambda_p - \lambda_n})
\, .
\end{equation}
The product representation 
\eqref{generating function for -s} then
yields the identity (cf. \cite[Proposition 3.1(ii)]{GK}
and its proof),
\begin{equation}\label{identity for lambda_0}
-\lambda_0(u) = \sum_{n=1}^\infty \gamma_n(u) \,.
\end{equation}
Since $\gamma_n(u) \ge 0$ for any $n \ge 1,$ one infers that
for any $u \in H^{-s}_{r,0}$ with $0 \le s < 1/2$,
the sequence
$(\gamma_n(u))_{n \ge 1}$ is in 
$\ell^1_+ \equiv \ell^1(\N , \R)$ and
\begin{equation}\label{formula for lambda_n}
\lambda_n(u) = n - \sum_{k = n+1}^\infty \gamma_k(u)
\le n\, .
\end{equation}
By \eqref{Q_u^+}, Lemma \ref{comparison Q_u^+}, and \eqref{identity B_u}, we  infer
that $- \lambda_0 \le \frac 12 + \eta_s(\|u\|_{-s})$, 
yielding, when combined with \eqref{identity for lambda_0} 
and \eqref{formula for lambda_n}, the estimate
\begin{equation}\label{sandwich lambda_n}
 n -  \frac 12 - \eta_s(\|u\|_{-s})  \le \lambda_n(u) \le n \, ,
 \quad \forall n \ge 0\, .
\end{equation}
 
In a next step we want consider the linear isomorphism 
 $$
 B_{u; 1-s} = B_u|_{H_+^{1-s}} : H^{1-s}_+ \to H^{-s}_+
 $$ 
 on the scale of Sobolev spaces. By duality,
 $B_{u; 1 - s}$ extends as a bounded linear isomorphism,
 $B_{u,s}:  H^{s}_+ \to H^{-1+s}_+ $ and hence by
 complex interpolation, for any $s \le t \le 1-s$, 
 the restriction of $B_{u,s}$ to $H^{t}_+$
 gives also rise to a bounded linear isomorphism,
 $B_{u; t}: H^{t}_+ \to H^{-1 +t}_+$.
 All these operators satisfy the same bound as $B_{u; 1 - s}$
 (cf. Lemma \ref{restriction of B_u to H^{1-s}}).
 To state our next result, it is convenient to introduce 
 the notation $\N_0 := \Z_{\ge 0}$. Recall that 
 $h^t(\N_0) = h^t(\N_0, \C)$, $t \in \R$,
 and that we write $\ell^2(\N_0)$ instead of
 $h^0(\N_0)$.
 \begin{lemma}\label{basis on Sobolev scale}
 Let $ u \in H^{-s}_{r,0}$ with $0 \le s < 1/2$
 and let $(f_n)_{n \ge 0}$ be the basis of $L^2_+,$
consisting of eigenfunctions of $L_u$ with
$f_n$, $n \ge 0$,
 corresponding to the eigenvalue $\lambda_n$ and normalized
 as in \cite[Definition 2.1]{GK}. Then for any 
 $-1 + s \le t \le 1-s,$
 $$
 K_{u;t} : H^t_+ \to h^t(\N_0)\, , \, 
 f \mapsto (\langle f | f_n \rangle)_{n \ge 0}
 $$
 is a linear isomorphism. In particular, for 
 $f = \Pi u \in H^{-s}_+$, one obtains that
 $(\langle \Pi u | f_n \rangle)_{n \ge 0} 
 \in h^{-s}(\N_0)$.
 The operator norm of $K_{u;t}$ and the one of its
 inverse can be uniformly bounded 
 for $-1 + s \le t \le 1-s$ and for  $u$ in a bounded subset of $ \in H^{-s}_{r,0}$.
 \end{lemma}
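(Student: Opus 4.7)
The strategy is to invoke the spectral theorem for the positive self-adjoint operator $B_u = L_u + (1+\eta_s(\|u\|_{-s}))$ on $H_+ = L^2_+$, whose eigenvalues $\mu_n := \lambda_n + 1+\eta_s(\|u\|_{-s})$ satisfy $\mu_n \asymp \langle n \rangle$ uniformly on bounded subsets of $H^{-s}_{r,0}$ as a direct consequence of \eqref{sandwich lambda_n}. For each $\tau \in \R$ introduce the Hilbert space $\mathcal H^\tau_u$ equipped with the norm
\[
\|f\|^2_{\mathcal H^\tau_u} := \sum_{n\ge 0} \mu_n^{2\tau}\,|\langle f | f_n \rangle|^2,
\]
interpreting the pairing with $f_n$ via $L^2_+$-duality when $\tau<0$. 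By construction and the equivalence $\mu_n^\tau \asymp \langle n\rangle^\tau$, the coefficient map $K_{u;\tau}$ is tautologically an isomorphism from $\mathcal H^\tau_u$ onto $h^\tau(\N_0)$, with operator norms controlled by $\eta_s(\|u\|_{-s})$ alone. The lemma thus reduces to the identification $\mathcal H^\tau_u = H^\tau_+$ with equivalent norms, uniformly for $u$ in bounded subsets of $H^{-s}_{r,0}$, for every $\tau \in [-1+s, 1-s]$.

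I would anchor the identification at two values of $\tau$. For $\tau = 0$ it is just Parseval. For $\tau = 1/2$ one has $\|f\|^2_{\mathcal H^{1/2}_u} = \langle B_u f | f \rangle = Q_u^+(f,f)$, and Lemma \ref{comparison Q_u^+} yields $\|f\|_{\mathcal H^{1/2}_u} \asymp \|f\|_{1/2}$ with constants depending only on $\eta_s(\|u\|_{-s})$. Complex interpolation between $\tau=0$ and $\tau=1/2$ (both $(\mathcal H^\tau_u)_\tau$ and $(H^\tau_+)_\tau$ being complex interpolation scales, the former because it is a weighted $\ell^2$ scale in the eigenbasis $(f_n)$) propagates the identification to $\tau \in (0,1/2)$, and $L^2_+$-duality extends it to $\tau \in [-1/2, 0]$. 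In particular $\mathcal H^{-s}_u = H^{-s}_+$, since $-s \in [-1/2,0]$.

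To reach $\tau = 1-s$, I would combine two isomorphisms: by functional calculus, $B_u : \mathcal H^{1-s}_u \to \mathcal H^{-s}_u$ (since $B_u$ acts by multiplication by $\mu_n$ in the eigenbasis), while Lemma \ref{restriction of B_u to H^{1-s}} provides $B_u|_{H^{1-s}_+} : H^{1-s}_+ \to H^{-s}_+$. Since $H^{1-s}_+ \subset H^{1/2}_+$ and $f_n \in H^{1-s}_+$, the symmetry of Remark \ref{symmetry of B_u} together with $B_u f_n = \mu_n f_n$ yields $\langle B_u f | f_n \rangle = \mu_n \langle f | f_n \rangle$ for every $f \in H^{1-s}_+$ and $n \ge 0$, so the two instances of $B_u$ intertwine $K_{u;1-s}$ and $K_{u;-s}$. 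Combined with the already established $\mathcal H^{-s}_u = H^{-s}_+$, this forces $\mathcal H^{1-s}_u = H^{1-s}_+$ with equivalent norms. A final complex interpolation between $\tau = 1/2$ and $\tau = 1-s$, together with duality, then covers the remaining ranges $[1/2, 1-s]$ and $[-1+s, -1/2]$.

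The main obstacle is propagating the identification $\mathcal H^\tau_u = H^\tau_+$ past the form-domain threshold $\tau = \pm 1/2$ of $B_u$: the quadratic-form description only anchors it at $\tau = 1/2$, and the sharp Sobolev isomorphism $B_u : H^{1-s}_+ \to H^{-s}_+$ from Lemma \ref{restriction of B_u to H^{1-s}} is precisely what transports the identification from $\tau = -s$ to $\tau = 1-s$. Uniformity of all equivalence constants on bounded subsets of $u \in H^{-s}_{r,0}$ is then inherited from the uniform control in Lemmas \ref{comparison Q_u^+} and \ref{restriction of B_u to H^{1-s}}, the ratio bounds $\mu_n/\langle n\rangle \in [1/2,\, 2+\eta_s(\|u\|_{-s})]$, and the continuity of interpolation and duality.
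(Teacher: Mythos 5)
Your proposal is correct and follows essentially the same route as the paper's proof: anchor the isomorphism at $t=1/2$ via the quadratic form $Q_u^+$ (the paper phrases this as $(\tilde f_n)_{n\ge 0}$ being an orthonormal basis of $(H^{1/2}_+,Q_u^+)$), extend to $[-1/2,1/2]$ by interpolation and duality, transport from $t=-s$ to $t=1-s$ using the isomorphism $B_{u;1-s}:H^{1-s}_+\to H^{-s}_+$ of Lemma \ref{restriction of B_u to H^{1-s}} (your intertwining identity $\langle B_uf|f_n\rangle=\mu_n\langle f|f_n\rangle$ is exactly the series manipulation in the paper), and finish with one more interpolation and duality step. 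The uniformity claims are justified the same way in both arguments.
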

 \begin{proof} We claim that the sequence $(\tilde f_n)_{n\ge 0}$, defined by
 $$
 \tilde f_n=\frac{f_n}{(\lambda_n+1+\eta_s(\Vert u\Vert_{-s}))^{1/2}} \, ,
 $$
 is an orthonormal basis of the Hilbert space $H^{1/2}_+$, endowed with the inner product $Q_u^+$. 
 Indeed, for any $n \ge 0$ and any $g\in H^{1/2}_+$, one has
 $$Q_u^+(\tilde f_n, g) =\langle L_u^+\tilde f_n\vert g\rangle =(\lambda_n+1+\eta_s(\Vert u\Vert_{-s}))^{1/2}\langle f_n\vert g\rangle \ .$$
As a consequence, for any $n, m \ge 0$, $Q_u^+(\tilde f_n, \tilde f_m) = \delta_{nm}$ and 
 the orthogonal complement of the subspace of $H^{1/2}_+$, spanned by  $(\tilde f_n)_{n\ge 0}$, is the trivial vector space $\{ 0\} $,
showing that $(\tilde f_n)_{n\ge 0}$ is an orthonormal basis of $H^{1/2}_+$.
In view of \eqref{sandwich lambda_n}, we then conclude that
  $$
 K_{u;1/2} : H^{1/2}_+ \to h^{1/2}(\N_0)\, , \, 
 f \mapsto (\langle f | f_n \rangle )_{n \ge 0}
  $$ 
  is a linear isomorphism.
  Its inverse is given by
  $$
  K_{u;1/2}^{-1} : h^{1/2}(\N_0) \to  H^{1/2}_+\, , \,
  (z_n)_{n \ge 0} \mapsto f:= \sum_{n=0}^\infty z_n f_n
  \ .$$
 By interpolation we infer that 
  for any $0 \le t \le 1/2$,
  $ K_{u;t} : H^{t}_+ \to h^{t}(\N_0)$
  is a linear isomorphism. Taking the transpose of
  $K_{u;t}^{-1}$ it then follows that for any 
  $0 \le t \le 1/2$,
  $$
   K_{u;-t} : H^{-t}_+ \to h^{-t}(\N_0)\, ,\,
   f \mapsto (\langle f | f_n \rangle )_{n \ge 0}\, ,
  $$
 is also a linear isomorphism. 
 It remains to discuss the remaining range of $t,$
 stated in the lemma.
 By Lemma \ref{restriction of B_u to H^{1-s}}, 
 the restriction of $B_u^{-1}$ to $H^{-s}_+$ 
 gives rise to a linear isomorphism
 $
 B_{u; 1-s}^{-1} :  H^{-s}_+ \to  H^{1-s}_+ \,.
 $
 For any $f \in H^{-s}_+$, one then has
 $$
 B_{u; 1-s}^{-1} f  = \sum_{n=0}^\infty 
 \frac{\langle f | f_n \rangle}
 {\lambda_n + 1 + \eta_s(\|u\|_{-s})} f_n \,.
 $$
 Since by our considerations above, 
 $(\langle f | f_n \rangle )_{n \ge 0} \in h^{-s}(\N_0)$
 one concludes that the sequence
 $\big( \frac{\langle f | f_n \rangle}
 {\lambda_n + 1 + \eta_s(\|u\|_{-s})} \big)_{n \ge 0}$
 is in $h^{1-s}(\N_0)$.
 Conversely, assume that $(z_n)_{n \ge 0} \in h^{1-s}(\N_0)$.
Then 
$\big((\lambda_n + 1 + \eta_s(\|u\|_{-s}))z_n  \big)_{n \ge 0}$ 
is in $h^{-s}(\N_0)$. Hence by the considerations above
on $K_{u; -s}$, there exists $g \in H^{-s}_+$
so that
$$
\langle g | f_n \rangle = 
(\lambda_n + 1 + \eta_s(\|u\|_{-s}))z_n\, , 
\quad \forall n \ge 0\, .
$$
Hence 
$$
g = \sum_{n=0}^\infty 
z_n (\lambda_n + 1 + \eta_s(\|u\|_{-s})) f_n 
= \sum_{n=0}^\infty 
z_n B_{u} f_n
$$ 
and $f:= B_{u}^{-1}g$ is in $H^{1-s}_+$ and satisfies 
$f = \sum_{n=0}^\infty z_n f_n$. Altogether we have thus
proved that
$$
K_{u;1 -s} : H^{1 -s}_+ \to h^{1 -s}(\N_0)\, ,\,
f \mapsto (\langle f | f_n \rangle )_{n \ge 0}\, ,
  $$
is a linear isomorphism. Interpolating between
$K_{u;-s}$ and $K_{u;1 -s}$ and between the adjoints
of their inverses shows that
for any $-1 + s \le t \le 1-s,$
 $$
 K_{u;t} : H^t \to h^t(\N_0)\, , \, 
 f \mapsto (\langle f | f_n \rangle)_{n \ge 0}
 $$
 is a linear isomorphism. Going through the arguments
 of the proof one verifies that 
 the operator norm of $K_{u;t}$ and the one of its
 inverse can be uniformly bounded 
 for $-1 + s \le t \le 1-s$ and for bounded subsets
 of elements $u \in H^{-s}_{r,0}$.
 \end{proof}
 
\smallskip

With these preparations done, we can now prove
Proposition \ref{extension Phi, part 1}$(i)$.

\smallskip
 \noindent
{\em Proof of Proposition \ref{extension Phi, part 1}$(i)$. }
 Let $ u \in H^{-s}_{r,0}$ with $0 \le s < 1/2$.
 By \eqref{product for kappa for -s}, one has 
 for any $n \ge 1$, 
 $$
 | \langle 1 | f_n \rangle |^2 = \gamma_n \kappa_n\, , \quad
\kappa_n = \frac{1}{\lambda_n - \lambda_0}
\prod_{p \ne n} (1 - \frac{\gamma_p}{\lambda_p - \lambda_n})
\, .
$$
Note that the infinite product is absolutely
convergent since the sequence $(\gamma_n(u))_{n \ge 1}$ 
is in $\ell^1_+$ (cf. \eqref{identity for lambda_0}). 
Furthermore, since
$$
1 - \frac{\gamma_p}{\lambda_p - \lambda_n}=
\frac{\lambda_{p-1} + 1 - \lambda_n}{\lambda_p - \lambda_n}
> 0 \, , \quad \forall p \ne n
$$
it follows that $\kappa_n > 0$ for any $n \ge 1$. 
Hence, the formula \cite[(4.1)]{GK} 
of the Birkhoff coordinates $\zeta_n(u)$, $n \ge 1$, 
defined for $u \in L^2_{r,0}$,
\begin{equation}\label{def coo for -s}
\zeta_n(u) = \frac{1}{\sqrt{\kappa_n(u)}} 
\langle 1 | f_n(\cdot, u) \rangle \, ,
\end{equation}
extends to $H^{-s}_{r,0}$. 
By \eqref{formula coeff of u} 
one has (cf. also \cite[(2.13)]{GK})
$$
\lambda_n \langle 1 | f_n \rangle 
= - \langle u | f_n \rangle
= - \langle \Pi u | f_n \rangle \, .
$$
 Since by Lemma \ref{basis on Sobolev scale},
 $(\langle \Pi u | f_n \rangle)_{n \ge 0} 
 \in h^{-s}(\N_0)$
and by \eqref{sandwich lambda_n}
$$
n -  \frac 12 - \eta_s(\|u\|_{-s})  \le \lambda_n(u) \le n \, , \quad \forall n \ge 0 \, ,
$$
 one concludes that 
 $$
( \langle 1 | f_n \rangle)_{n \ge 1} \in h^{1-s}_+\, , \quad
\kappa_n^{-1/2} = \sqrt{n} + o(1)
 $$ 
 and hence $(\zeta_n(u))_{n \ge 1} \in h^{1/2-s}_+$. 
In summary, we have proved that for any $0 < s < 1/2,$
the Birkhoff map $\Phi: L^2_{r,0} \to h^{1/2}_+$ of
Theorem \ref{main result} extends to a map 
$$
H^{-s}_{r,0} \to h^{1/2 - s}_+ \, , \,
u \mapsto  (\zeta_n(u))_{n \ge 1}\, , 
$$
 which we again denote by $\Phi$. Going through the arguments
 of the proof one verifies that $\Phi$ maps bounded subsets of $H^{-s}_{r,0}$ into bounded subsets of $h^{1/2 - s}_+$ . 
 \hfill $\square$

\smallskip

To show Proposition \ref{extension Phi, part 1}$(ii)$
we first need to prove some additional auxilary results.
By \eqref{definition generation fucntion},
the generating function is defined as
$$
 \mathcal H_\lambda: H^{-s}_{r,0} \to \C, \, u \mapsto 
\langle (L_u + \lambda)^{-1} 1 | 1 \rangle \, .
$$
For any given $u \in H^{-s}_{r,0}$, $\mathcal H_\lambda(u)$
is a meromorphic function
in $\lambda \in \C$ with possible poles at the eigenvalues of 
$L_u$ and satisfies (cf.  \eqref{generating function for -s})
\begin{equation}\label{product}
 \mathcal H_\lambda (u)=\frac{1}{\lambda_0+\lambda}\prod_{n=1}^\infty \big(1-\frac{\gamma_n}{\lambda_n+\lambda}   \big)\, . 
\end{equation}

\begin{lemma}\label{gamma_n for -s}
For any $0 \le s < 1/2,$ the following holds:\\
$(i)$ For any $\lambda \in \C \setminus \R,$ 
$\mathcal H_\lambda: H^{-s}_{r,0} \to \C$ is 
sequentially weakly continuous.\\
$(ii)$ $(\sqrt{\gamma_n})_{n \ge 1} : H^{-s}_{r,0} \to 
h^{1/2 -s}_+$
is sequentially weakly continuous. 
In particular, for any $n \ge 0,$
$\lambda_n: H^{-s}_{r,0} \to \R$ is 
sequentially weakly continuous.
\end{lemma}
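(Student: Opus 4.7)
I would prove (i) first and then deduce (ii) by tracking the poles of $\mathcal{H}_\lambda$. For (i), fix $\lambda \in \C \setminus \R$ and a sequence $u_k \rightharpoonup u$ in $H^{-s}_{r,0}$; in particular $(\|u_k\|_{-s})$ is bounded. Writing $f_k := (L_{u_k}+\lambda)^{-1} 1$ and $f := (L_u + \lambda)^{-1} 1$, the first step is a uniform bound $\sup_k \|f_k\|_{1-s} < \infty$. I would obtain this by adapting the proof of Lemma \ref{restriction of B_u to H^{1-s}} with $B_u$ replaced by $L_u + \lambda$: the nonreal $\lambda$ yields $L^2_+$--invertibility with norm $\le |\operatorname{Im}\lambda|^{-1}$ (since $L_u$ is self-adjoint on $H_+$), and bootstrapping through
\begin{equation*}
D f_k = 1 - \lambda f_k + T_{u_k} f_k
\end{equation*}
together with Lemma \ref{estimate for T_u f}(i) upgrades this to an $H^{1-s}_+$--bound, uniform on bounded subsets of $H^{-s}_{r,0}$. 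Extracting a subsequence, $f_k \rightharpoonup f^*$ in $H^{1-s}_+$ and, by Rellich, $f_k \to f^*$ strongly in $H^\sigma_+$ for $\sigma := (1/2+s)/2$; note that $s < \sigma < 1-s$ since $s < 1/2$.

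The crucial second step is passing to the limit in the Toeplitz term. For any $g \in H^\sigma_+$,
\begin{equation*}
\langle u_k | f_k \bar g\rangle - \langle u | f^* \bar g\rangle = \langle u_k | (f_k - f^*) \bar g\rangle + \langle u_k - u | f^* \bar g\rangle.
\end{equation*}
The first summand is bounded by $C\|u_k\|_{-s}\|f_k - f^*\|_\sigma \|g\|_\sigma$ via the paramultiplication estimate of Lemma \ref{estimate for T_u f}(i), and tends to $0$ by strong convergence in $H^\sigma_+$; the second vanishes because $f^* \bar g \in H^s$ by the same estimate and $u_k \rightharpoonup u$ in $H^{-s}$. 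Hence $T_{u_k}f_k \rightharpoonup T_u f^*$ against $H^\sigma_+$, and passing to the limit in $D f_k - T_{u_k} f_k + \lambda f_k = 1$ gives $(L_u + \lambda)f^* = 1$, so $f^* = f$. As the limit is independent of the subsequence, $f_k \rightharpoonup f$ in $H^{1-s}_+$, in particular $\mathcal{H}_\lambda(u_k) = \langle f_k | 1\rangle \to \langle f | 1\rangle = \mathcal{H}_\lambda(u)$.

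For (ii), I would combine (i) with the product representation \eqref{product}. Let $u_k \rightharpoonup u$ in $H^{-s}_{r,0}$. By \eqref{sandwich lambda_n} the eigenvalues $\lambda_n(u_k)$ are bounded (uniformly in $n,k$ in any fixed window), and by \eqref{identity for lambda_0} the sequences $(\gamma_n(u_k))_{n\ge 1}$ are uniformly bounded in $\ell^1_+$. A Cantor diagonal extraction produces a subsequence along which $\lambda_n(u_k) \to \mu_n$ and $\gamma_n(u_k) \to \tilde\gamma_n$ for every $n$. The logarithmic summand in \eqref{product} is controlled for large $n$ by $C\gamma_n(u_k)/n$, uniformly summable, so dominated convergence combined with (i) yields
\begin{equation*}
\frac{1}{\mu_0 + \lambda}\prod_{n=1}^\infty\Bigl(1 - \frac{\tilde\gamma_n}{\mu_n + \lambda}\Bigr) = \mathcal{H}_\lambda(u) \qquad \text{for all } \lambda \in \C \setminus \R.
\end{equation*}
Both sides are meromorphic in $\lambda$ with simple poles (since $\lambda_n - \lambda_{n-1} \ge 1$), so matching the pole locations forces $\mu_n = \lambda_n(u)$, whence $\tilde\gamma_n = \mu_n - \mu_{n-1} - 1 = \gamma_n(u)$. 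As the pointwise limits do not depend on the subsequence, $\lambda_n(u_k) \to \lambda_n(u)$ and $\gamma_n(u_k) \to \gamma_n(u)$ for every $n$. Combined with the uniform $h^{1/2-s}_+$--bound on $(\sqrt{\gamma_n(u_k)})_{n\ge 1}$ provided by Proposition \ref{extension Phi, part 1}(i), coordinatewise convergence upgrades to weak convergence in $h^{1/2-s}_+$, proving both assertions.

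\textbf{Main obstacle.} The heart of the argument is the Toeplitz limit passage in (i); it rests on combining the paramultiplication bound of Lemma \ref{estimate for T_u f}(i) with the compact embedding $H^{1-s}_+ \hookrightarrow H^\sigma_+$, both of which exploit the gap $s < 1/2$. A secondary technicality is the interchange of limit and infinite product in (ii), which is handled by the uniform $\ell^1$--control on $(\gamma_n(u_k))_n$ coming from \eqref{identity for lambda_0}.
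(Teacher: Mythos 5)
Your overall strategy is the same as the paper's: for (i), uniform resolvent bounds plus Rellich compactness to pass to the weak limit in $(L_{u_k}+\lambda)f_k=1$; for (ii), extraction of pointwise limits of the spectral data, passage to the limit in the product representation \eqref{product}, and identification of the limit via the zeros and poles of the meromorphic function $\mathcal H_\lambda(u)$. The one genuine difference is in (i): you work directly with an arbitrary nonreal $\lambda$, using the self-adjoint resolvent bound $\|(L_{u_k}+\lambda)^{-1}\|_{L^2\to L^2}\le |\operatorname{Im}\lambda|^{-1}$, whereas the paper takes $\lambda$ real and large (so that $L_{u^{(k)}}+\lambda$ is a uniformly invertible perturbation of $B_{u^{(k)}}$, to which Lemma \ref{restriction of B_u to H^{1-s}} applies verbatim), proves convergence on a countable set $\Lambda\subset[c,\infty)$ with a cluster point, and then extends to all of $\C\setminus\R$ by meromorphy. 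Your route dispenses with that last normal-families step and makes the Toeplitz limit passage explicit (the paper compresses it into one invocation of Rellich); the paper's route has the advantage that the required a priori bound on $(L_{u^{(k)}}+\lambda)^{-1}$ is already available, and that real $\lambda\ge c$ is exactly what is reused in the proof of (ii).

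There is one small but real gap in your uniform bound $\sup_k\|f_k\|_{1-s}<\infty$. Your bootstrap starts from the $L^2$ bound $\|f_k\|\le|\operatorname{Im}\lambda|^{-1}$, but the first application of the paramultiplication estimate \eqref{para} to $T_{u_k}f_k$ requires control of $\|f_k\|_\sigma$ with $\sigma=(1/2+s)/2>0$, which does not follow from the $L^2$ bound alone; the proof of Lemma \ref{restriction of B_u to H^{1-s}} that you propose to adapt starts from an $H^{1/2}$ bound supplied by the Lax--Milgram construction, and that input is missing here. The fix is to insert one step using the coercivity of the form (Lemma \ref{comparison Q_u^+}): since $B_{u_k}f_k=1+(1+\eta_s(\|u_k\|_{-s})-\lambda)f_k$, taking the real part of $Q^+_{u_k}(f_k,f_k)=\langle B_{u_k}f_k\,|\,f_k\rangle$ gives
\begin{equation*}
\tfrac12\|f_k\|_{1/2}^2\le \|f_k\|+\bigl(1+\eta_s(\|u_k\|_{-s})+|\lambda|\bigr)\|f_k\|^2 ,
\end{equation*}
which together with the $L^2$ resolvent bound yields a uniform $H^{1/2}$ bound; from there your bootstrap through $Df_k=1-\lambda f_k+T_{u_k}f_k$ runs exactly as in Lemma \ref{restriction of B_u to H^{1-s}}. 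With this step supplied, part (i) is complete, and part (ii) goes through as you describe (the uniform $\ell^1$ tail control $\sum_{n>N}\gamma_n(u_k)\le N^{-(1-2s)}\sum_n n^{1-2s}\gamma_n(u_k)$ justifying the interchange of limit and product, and the boundedness of $(\sqrt{\gamma_n(u_k)})_n$ in $h^{1/2-s}_+$ upgrading coordinatewise to weak convergence).
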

\begin{proof}
$(i)$ 
Let $(u^{(k)})_{k \ge 1}$ be a sequence in 
$H^{-s}_{r,0}$ with
$u^{(k)}\rightharpoonup u$ weakly in $H^{-s}_{r,0}$ 
as $k \to \infty$. By the definition of $\zeta_n(u)$
(cf. \eqref{product for kappa for -s} -- 
\eqref{def coo for -s})
one has $| \zeta_n(u) |^2 = \gamma_n(u)$.
Since by 
Proposition \ref{extension Phi, part 1}$(i)$,
$\Phi$ maps bounded subsets of $H^{-s}_{r,0}$
to bounded subsets of $h^{1/2 -s}_+$,
there exists $M > 0$ so that for any $k \ge 1$
$$
\|u\|, \,  \|u^{(k)}\| \le M\, \qquad
\sum_{n=1}^\infty n^{1-2s} \gamma_n(u), \,\,
\sum_{n=1}^\infty n^{1-2s} \gamma_n(u^{(k)}) \le M\, .
$$
By passing to a subsequence, if needed, we may assume that
\begin{equation}\label{lim gamma_n for -s}
\big( \gamma_n(u^{(k)})^{1/2}\big)_{n \ge 1}
\rightharpoonup (\rho_n^{1/2})_{n \ge 1}
\end{equation}
weakly in $h^{1/2 -s}(\N, \R)$ 
where $\rho_n \ge 0$ for any $n \ge 1$.
It then follows that 
$\big(\gamma_n(u^{(k)})\big)_{n \ge 1}
\to (\rho_n)_{n \ge 1}$
 strongly in $\ell^1(\N, \R)$. Define
 $$
 \nu_n:= n - \sum_{p = n+1}^{\infty} \rho_p\,, \quad
 \forall n \ge 0\, .
 $$
Then for any $n \ge 1$, $\nu_n = \nu_{n-1} + 1 + \rho_n$
and $\lambda_n(u^{(k)}) \to \nu_n$ uniformly in $n \ge 0.$
Since $L_{u^{(k)}} \ge \lambda_0(u^{(k)})$
we infer that there exists $c > |-\nu_0 + 1|$
so that for any $k \ge 1$ and $\lambda \ge c$,
$$
L_{u^{(k)}} + \lambda : H^{1-s}_+ \to H^{-s}_+
$$
is a linear isomorphism whose inverse is bounded uniformly in $k$. Therefore
$$
w_\lambda^{(k)}:= 
\big(L_{u^{(k)}} + \lambda  \big)^{-1}[1]\, , \quad
\forall k \ge 1\, ,
$$
is a well-defined, bounded sequence in $H^{1-s}_+.$
Let us choose an arbitrary countable subset 
$\Lambda \subset [c, \infty)$ with one cluster point. By a diagonal procedure, we extract a subsequence
of $(w_\lambda^{(k)})_{k \ge 1}$, again denoted by
$(w_\lambda^{(k)})_{k \ge 1}$, so that for every 
$\lambda \in \Lambda,$ the sequence $(w_\lambda^{(k)})$
converges weakly in $H^{1-s}_+$ to some element 
$v_\lambda \in H^{1-s}_+$. By Rellich's theorem 
$$
\big(L_{u^{(k)}} + \lambda  \big) w_\lambda^{(k)}
\rightharpoonup 
\big(L_{u} + \lambda  \big) v_{\lambda}
$$
weakly in $H^{-s}_+$ as $k \to \infty.$
Since by definition, 
$\big(L_{u^{(k)}} + \lambda  \big) 
w_\lambda^{(k)} = 1$ for any $k \ge 1,$ it follows that
for any $\lambda \in \Lambda,$
$\big(L_{u} + \lambda  \big) 
v_\lambda = 1$ and thus by the definition of 
the generating function
$$
\mathcal H_\lambda (u^{(k)}) =
\langle w_\lambda^{(k)} | 1 \rangle \to
\langle v_\lambda  | 1 \rangle = 
\mathcal H_\lambda (u)\, , \quad
\forall \lambda \in \Lambda \, .
$$
Since $\mathcal H_\lambda (u^{(k)}) $ and 
$\mathcal H_\lambda (u)$ are meromorphic functions
whose poles are on the real axis, it follows that
the convergence holds for any 
$\lambda \in \C \setminus \R$. This proves
item $(i)$. \\
$(ii)$ We apply item $(i)$ (and its proof) as follows.
As mentioned above, 
$\lambda_n(u^{(k)}) \to \rho_n$, uniformly in $n \ge 0$.
By the proof of item $(i)$ one has for any $c \le \lambda < \infty,$
$$
\mathcal H_\lambda (u^{(k)})  \to 
\frac{1}{\nu_0+\lambda}\prod_{n=1}^\infty \big(1-\frac{\rho_n}{\nu_n+\lambda}   \big)\,  
$$
and we conclude that for any $\lambda \in \Lambda$
$$
\frac{1}{\lambda_0(u)+\lambda}\prod_{n=1}^\infty 
\big(1-\frac{\gamma_n(u)}{\lambda_n(u) +\lambda}   \big)
= \mathcal H_\lambda (u) =
\frac{1}{\nu_0+\lambda}\prod_{n=1}^\infty \big(1-\frac{\rho_n}{\nu_n+\lambda}   \big) .
$$
Since $\mathcal H_\lambda (u)$ and
infinite product are meromorphic functions
in $\lambda$, the functions are equal. In particular,
they have the same zeroes and the same poles.
Since the sequences 
$(\lambda_n(u))_{n \ge 0}$ and 
$(\nu_n(u))_{n \ge 0}$
are both listed in increasing order it follows that 
$\lambda_n(u) = \nu_n$ for any $n \ge 0$,
implying that for any $n \ge 1$, 
$$
\gamma_n(u) = 
\lambda_n(u) - \lambda_{n-1}(u) -1 = 
\nu_n - \nu_{n-1} -1 = \rho_n\, .
$$
By \eqref{lim gamma_n for -s} we then conclude that
$$
\big( \gamma_n(u^{(k)})^{1/2}\big)_{n \ge 1}
\rightharpoonup (\gamma_n(u)^{1/2})_{n \ge 1}
$$
weakly in $h^{1/2 -s}(\N, \R)$. 
\end{proof}

\begin{corollary}\label{kappa_n for neg s}
For any $0 \le s < 1/2$ and $n \ge 1$, the functional 
$\kappa_n : H^{-s}_{r,0} \to \R$,
introduced in \eqref{product for kappa for -s},
is sequentially weakly continuous.
\end{corollary}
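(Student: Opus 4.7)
The plan is to fix an arbitrary weakly convergent sequence $u^{(k)} \rightharpoonup u$ in $H^{-s}_{r,0}$ and pass to the limit in the product formula \eqref{product for kappa for -s} for $\kappa_n$. First, Lemma \ref{gamma_n for -s}$(ii)$ supplies the pointwise convergences $\lambda_m(u^{(k)}) \to \lambda_m(u)$ and $\gamma_m(u^{(k)}) \to \gamma_m(u)$ for each $m \ge 0$. Moreover, the argument given in the proof of that lemma shows that $(\gamma_m(u^{(k)}))_{m \ge 1}$ in fact converges to $(\gamma_m(u))_{m \ge 1}$ strongly in $\ell^1_+$; this is the key quantitative input. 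Since $\lambda_n - \lambda_0 \ge n \ge 1$, the prefactor $1/(\lambda_n(u^{(k)}) - \lambda_0(u^{(k)}))$ converges to $1/(\lambda_n(u) - \lambda_0(u))$.

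The main task is to pass to the limit in the infinite product $\prod_{p\neq n}(1 - \gamma_p/(\lambda_p - \lambda_n))$. From the interlacing $\lambda_m - \lambda_{m-1} \ge 1$ one gets $|\lambda_p - \lambda_n| \ge |p-n|$ for every $p \ne n$, and hence the uniform bound
\[
0 \le \frac{\gamma_p(u^{(k)})}{|\lambda_p(u^{(k)}) - \lambda_n(u^{(k)})|} \le \frac{\gamma_p(u^{(k)})}{|p-n|}.
\]
Let $M$ be a common $\ell^1_+$ bound for the sequences $(\gamma_p(u^{(k)}))_p$, which exists because $\Phi$ sends bounded sets to bounded sets by Proposition \ref{extension Phi, part 1}$(i)$ (and weakly convergent sequences are bounded). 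I would then choose $N > n$ large enough that $M/(N-n) < 1/2$. For every $p > N$ and every $k$, the factor $1 - \gamma_p(u^{(k)})/(\lambda_p(u^{(k)}) - \lambda_n(u^{(k)}))$ therefore lies in $[1/2, 2]$, and using $|\log(1-x)| \le 2|x|$ for $|x|\le 1/2$ one obtains
\[
\sum_{p > N}\left|\log\Big(1 - \frac{\gamma_p(u^{(k)})}{\lambda_p(u^{(k)}) - \lambda_n(u^{(k)})}\Big)\right| \le \frac{2}{N-n}\sum_{p > N} \gamma_p(u^{(k)}) \le \frac{2M}{N-n},
\]
uniformly in $k$. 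The tail of the product is thus made uniformly close to $1$ by taking $N$ large.

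For the remaining truncated product over the finitely many indices $1 \le p \le N$ with $p \ne n$, the denominators $\lambda_p(u) - \lambda_n(u)$ are bounded away from zero by the interlacing, so this finite product is a continuous function of the $\lambda_p(u^{(k)})$ and $\gamma_p(u^{(k)})$ for $p \le N$, and consequently converges to its value at $u$. Combining the two estimates yields $\kappa_n(u^{(k)}) \to \kappa_n(u)$, which establishes the sequential weak continuity. The only delicate point is the uniform tail bound on the product; it is precisely the $\ell^1$ convergence of the gaps provided by Lemma \ref{gamma_n for -s} that makes it routine, and no additional spectral information about $L_u$ is needed.
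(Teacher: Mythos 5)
Your proposal is correct and follows essentially the same route as the paper: both rest on Lemma \ref{gamma_n for -s}, which gives strong $\ell^1$ convergence of the gaps and hence uniform convergence of the differences $\lambda_p(u^{(k)})-\lambda_n(u^{(k)})$, and then pass to the limit in the product formula \eqref{product for kappa for -s}. The only difference is that you spell out the uniform control of the tail of the infinite product (via $|\lambda_p-\lambda_n|\ge|p-n|$ and the uniform $\ell^1$ bound on the gaps), a step the paper leaves implicit.
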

\begin{proof}
Let $(u^{(k)})_{k \ge 1}$ be a sequence in 
$H^{-s}_{r,0}$ with
$u^{(k)}\rightharpoonup u$ weakly in $H^{-s}_{r,0}$ 
as $k \to \infty$. By \eqref{formula for lambda_n},
one has for any $p < n,$
$$
\lambda_p(u^{(k)}) - \lambda_n(u^{(k)}) =
p-n - \sum_{j = p+1}^n \gamma_j(u^{(k)})
$$
whereas for $p > n$
$$
\lambda_p(u^{(k)}) - \lambda_n(u^{(k)}) =
p-n + \sum_{j = n+1}^p \gamma_j(u^{(k)})\, .
$$
By Lemma \ref{gamma_n for -s}, one then concludes that
$$
\lim_{k \to \infty} \big( \lambda_p(u^{(k)}) - \lambda_n(u^{(k)})\big) - 
\big( \lambda_p(u) - \lambda_n(u)\big) =0
$$
uniformly in $p, n \ge 0$. By the product formula 
\eqref{product for kappa for -s}
for $\kappa_n$ it then follows that for any $n \ge 1$
$$
\lim_{k \to \infty} \kappa_n(u^{(k)}) = \kappa_n(u) \, .
$$
\end{proof} 

Furthermore, we need to prove the following lemma
concerning the eigenfunctions 
$f_n(\cdot, u)$, $n \ge 0$, of $L_u$.
\begin{lemma}\label{f_n bounded for s neg}
Given $0 \le s < 1/2$, $M > 0$ and $n \ge 0,$
there exists a constant $C_{s, M, n} \ge 1$ so that for any 
$u \in H^{-s}_{r,0}$ with $\|u \|_{-s} \le M$ and any $n \ge 0$
\begin{equation}\label{estimate f_n final}
\| f_n(\cdot , u)) \|_{1-s} \le C_{s, M, n} \, .
\end{equation}
\end{lemma}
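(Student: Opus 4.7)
The idea is to read the eigenvalue equation backwards through the isomorphism $B_{u;1-s}:H^{1-s}_+\to H^{-s}_+$ of Lemma \ref{restriction of B_u to H^{1-s}}. The $L^2$-normalization $\|f_n(\cdot,u)\|=1$ is the starting input, and the uniformity of the operator bounds on the set $\{\|u\|_{-s}\le M\}$ then delivers the constant $C_{s,M,n}$.

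\noindent
\textbf{Key steps.} First, recall from Corollary \ref{definition L_u} that $f_n(\cdot,u)\in\mathrm{dom}(L_u)=\mathrm{dom}(L_u^+)$ and, by the corollary following Lemma \ref{restriction of B_u to H^{1-s}}, already $f_n\in H^{1-s}_+$; so the statement of the lemma is purely a quantitative a priori bound. Rewrite the eigenvalue equation $L_u f_n=\lambda_n f_n$ as
\[
L_u^+ f_n = \mu_n(u)\,f_n,\qquad \mu_n(u):=\lambda_n(u)+1+\eta_s(\|u\|_{-s}),
\]
and, using that $L_u^+$ is the restriction of the isomorphism $B_u:H^{1/2}_+\to H^{-1/2}_+$ to $\mathrm{dom}(L_u^+)$, read this as the identity
\[
B_u f_n = \mu_n(u)\,f_n\qquad\text{in }H^{-1/2}_+.
\]
The right-hand side lies in $L^2_+\subset H^{-s}_+$ because $f_n$ is $L^2$-normalized, so we may invert via the isomorphism $B_{u;1-s}^{-1}:H^{-s}_+\to H^{1-s}_+$ supplied by Lemma \ref{restriction of B_u to H^{1-s}}, obtaining
\[
f_n = \mu_n(u)\,B_{u;1-s}^{-1} f_n.
\]

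\noindent
\textbf{Quantifying the bound.} Taking $H^{1-s}$-norms yields
\[
\|f_n\|_{1-s} \le \mu_n(u)\,\|B_{u;1-s}^{-1}\|_{H^{-s}_+\to H^{1-s}_+}\,\|f_n\|_{-s} \le \mu_n(u)\,\|B_{u;1-s}^{-1}\|_{H^{-s}_+\to H^{1-s}_+},
\]
where the second inequality uses $\|f_n\|_{-s}\le\|f_n\|=1$. The last statement of Lemma \ref{restriction of B_u to H^{1-s}} gives a bound $\|B_{u;1-s}^{-1}\|_{H^{-s}_+\to H^{1-s}_+}\le K_{s,M}$ valid uniformly on $\{\|u\|_{-s}\le M\}$. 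For the prefactor, combine the upper bound $\lambda_n(u)\le n$ from \eqref{sandwich lambda_n} with the monotonicity of $\eta_s$ in \eqref{def eta_s} to get $\mu_n(u)\le n+1+\eta_s(M)$. Setting
\[
C_{s,M,n}:= K_{s,M}\bigl(n+1+\eta_s(M)\bigr)
\]
then yields \eqref{estimate f_n final}.

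\noindent
\textbf{Anticipated obstacle.} The only nontrivial ingredient is the uniform control of $\|B_{u;1-s}^{-1}\|$ on bounded subsets of $H^{-s}_{r,0}$, but this has already been recorded in Lemma \ref{restriction of B_u to H^{1-s}}; the present lemma is essentially a packaging of that fact together with the spectral upper bound $\lambda_n\le n$, so no further estimate is needed.
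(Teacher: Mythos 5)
Your proof is correct, but it takes a different route from the paper's. The paper does not pass through the isomorphism $B_{u;1-s}$ at all: it writes the eigenvalue equation as $-i\partial_x f_n=\lambda_n f_n+T_uf_n$, bounds $\|\partial_x f_n\|_{-s}\le|\lambda_n|+\|T_uf_n\|_{-s}$ with $\|T_uf_n\|_{-s}\le C_s\|u\|_{-s}\|f_n\|_{1-\sigma}$, $\sigma=(1/2+s)/2$, and then absorbs the intermediate norm $\|f_n\|_{1-\sigma}$ into $\|f_n\|_{1-s}$ by interpolation and Young's inequality, using $\|f_n\|=1$; the finiteness of $\|f_n\|_{1-s}$ needed to justify the absorption comes from the corollary to Lemma \ref{restriction of B_u to H^{1-s}}, exactly as you observe. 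Your argument instead reads $B_{u;1-s}f_n=\mu_n f_n$ with $\mu_n=\lambda_n+1+\eta_s(\|u\|_{-s})$, notes that the right-hand side lies in $H_+\subset H^{-s}_+$ with $\|f_n\|_{-s}\le\|f_n\|=1$, and applies the uniform bound on $\|B_{u;1-s}^{-1}\|_{H^{-s}_+\to H^{1-s}_+}$ recorded at the end of Lemma \ref{restriction of B_u to H^{1-s}}, together with $\lambda_n\le n$ from \eqref{sandwich lambda_n}. Each step checks out, and the constant $C_{s,M,n}=K_{s,M}(n+1+\eta_s(M))$ you produce is legitimate. What your version buys is brevity and a clean separation of concerns; what it costs is that the analytic content (the two-step bootstrap through $H^{1-\sigma}$ and its uniformity in $u$) is entirely delegated to the uniform-boundedness claim of Lemma \ref{restriction of B_u to H^{1-s}}, which the paper itself only justifies by the remark ``going through the arguments of the proof one verifies\dots''; the paper's direct estimate makes that quantitative dependence explicit at the point of use.
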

\begin{proof}
By the normalisation of $f_n$,
 $\|f_n\| = 1$. Since $f_n$ is an eigenfunction, corresponding
 to the eigenvalue $\lambda_n,$ one has
 $$
  -i\partial_x f_n = L_u f_n + T_u f_n = 
 \lambda_n f_n + T_u f_n \, ,
 $$
implying that
\begin{equation}\label{estimate of f_n for neg s}
\| \partial_x f_n \|_{-s} \le |\lambda_n| + \|T_uf_n\|_{-s}
\, .
\end{equation}
Note that by the estimates \eqref{sandwich lambda_n}, 
\begin{equation}\label{estimate | lambda_n |}
|\lambda_n| \le \max\{ n , |\lambda_0| \}
\le n + |\lambda_0|\, , \quad
|\lambda_0 | \le 1+ \eta_s(\| u \|_{-s}) \, 
\end{equation}
where $\eta_s(\| u \|_{-s})$ is given by \eqref{def eta_s}.
Furthermore, since $\sigma = (1/2 +s)/2$ (cf. \eqref{para}) one has
$1-s>1- \sigma >1/2$, implying that $H_+^{1-\sigma}$ acts on $H_+^t$ 
for every $t$ in the open interval $(-(1-\sigma),1-\sigma)$. Hence 
\begin{equation}\label{estimate T_u f_n}
\| T_u f_n\|_{-s} \le C_{s}
\|u \|_{-s} \| f_n \|_{1-\sigma} \, .
\end{equation}
Using interpolation and Young's inequality
(cf. \eqref{interpolation}, \eqref{Young}),
\eqref{estimate T_u f_n} yields an estimate, which together
with \eqref{estimate of f_n for neg s} and 
\eqref{estimate | lambda_n |} leads to the claimed estimate 
\eqref{estimate f_n final}.
\end{proof}
 
With these preparations done, we can now prove
Proposition \ref{extension Phi, part 1}$(ii)$.

 \smallskip
 
 \noindent
{\em Proof of Proposition \ref{extension Phi, part 1}$(ii)$.}
 First we prove that for any $0 \le s < 1/2$,
$\Phi: H^{-s}_{r,0} \to h^{1/2 - s}_+$
is sequentially weakly continuous:
assume that $(u^{(k)})_{k \ge 1}$ is a sequence in 
$H^{-s}_{r,0}$ with
$u^{(k)} \rightharpoonup u$ weakly in $H^{-s}_{r,0}$ 
as $k \to \infty$. Let 
$\zeta^{(k)}:= \Phi(u^{(k)})$ and $\zeta := \Phi(u)$.
Since $(u^{(k)})_{k \ge 1}$ is bounded
in $H^{-s}_{r,0}$
and $\Phi$ maps bounded subsets of $H^{-s}_{r,0}$
to bounded subsets of $h^{1/2 -s}_+$,
the sequence $(\zeta^{(k)})_{k \ge 1}$ is bounded in 
$h^{1/2 - s}_+$. To show that 
$\zeta^{(k)}\rightharpoonup \zeta$ weakly in $h^{1/2}_+$,
it then suffices to prove that for any $n \ge 1,$
$\lim_{k \to \infty} \zeta^{(k)}_n = \zeta_n$.
By the definition of the Birkhoff coordinates 
\eqref{def coo for -s},
$\zeta^{(k)}_n =
\langle 1 | f_n^{(k)} \rangle / (\kappa_n^{(k)})^{1/2}$ 
where
$\kappa_n^{(k)}:= \kappa_n(u^{(k)})$ and 
$f_n^{(k)} := f_n(\cdot, u^{(k)})$.
By Corollary \ref{kappa_n for neg s},
$\lim_{k \to \infty}\kappa_n^{(k)} = \kappa_n$
and by Lemma \ref{f_n bounded for s neg},
saying that for any $n \ge 0$,  $\|f_n\|_{1-s}$
is uniformly bounded on bounded subsets of $H^{-s}_{r,0}$,
$\lim_{k \to \infty}\langle 1 | f_n^{(k)} \rangle =
\langle 1 | f_n \rangle$
where $\kappa_n := \kappa_n(u)$ and
$f_n := f_n(\cdot, u)$. This implies that 
$\lim_{k \to \infty} \zeta^{(k)}_n = \zeta_n$
for any $n \ge 1$.\\
It remains to show that for any $0 < s < 1/2,$
$\Phi : H^{-s}_{r,0} \to h^{1/2-s}_+$
is one-to-one. In the case where
$u \in L^2_{r,0}$,  it was verified
in the proof of \cite[Proposition 4.2]{GK}
that the Fourier coefficients $\widehat u(k)$, $k\ge 1$,
of $u$ can be explicitly expressed in terms of the components 
$\zeta_n(u)$ of the sequence $\zeta(u) = \Phi(u)$.
These formulas continue to hold for $u \in H^{-s}_{r,0}$.
This completes the proof of 
Proposition \ref{extension Phi, part 1}$(ii)$.
\hfill $\square$

\medskip

\section[extension of Birkhoff map. Part 2] {Extension of $\Phi$. Part 2}\label{extension of Phi, part 2}

In this section we prove the second part of 
Theorem \ref{extension Phi}, which we again state as a separate proposition.

\begin{proposition}\label{extension Phi, part 2}
{\sc (Extension of $\Phi$. Part 2)}
For any $0 < s < 1/2,$ the map 
$\Phi: H^{-s}_{r,0} \to h^{1/2-s}_+$  
has the following additional properties:\\
$(i)$ The inverse image of $\Phi$ of any
bounded subset of $h^{1/2-s}_+$ is a bounded
subset in 
$H^{-s}_{r,0}$. \\
$(ii)$ $\Phi$ is onto and the inverse map
$\Phi^{-1}: h^{1/2-s}_+ \to H^{-s}_{r,0}$,
is sequentially weakly continuous.\\
$(iii)$
For any $0 < s < 1/2,$ the Birkhoff map 
$\Phi: H^{-s}_{r,0} \to h^{1/2-s}_+$  
and its inverse 
$\Phi^{-1}: h^{1/2-s}_+ \to H^{-s}_{r,0}$,
are continuous. 
\end{proposition}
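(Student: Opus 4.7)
The three claims are tightly linked: (i) provides the a priori bound that powers the compactness argument for (ii), while (ii) yields weak convergence that (iii) upgrades to strong convergence via the same functional identity used in (i). The plan is to prove them in order, using in each step the results already established in Proposition \ref{extension Phi, part 1} together with the spectral theory of $L_u$ developed in Section \ref{extension of Phi, part 1}.

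\textbf{(i).} The goal is to produce a continuous $F_s : \R_{>0} \to \R_{>0}$ with $\|u\|_{-s} \le F_s(\|\Phi(u)\|_{1/2-s})$. The strategy is to extract from the generating function $\mathcal H_\lambda(u) = \langle (L_u+\lambda)^{-1}1 \mid 1 \rangle$ a trace/perturbation-determinant identity yielding a functional that dominates $\|u\|_{-s}^2$ and depends continuously on $\Phi(u)$ alone. Taking logarithms of the product formula \eqref{generating function for -s} gives
$$
-\ln\big((\lambda_0+\lambda)\mathcal H_\lambda(u)\big) = \sum_{n=1}^\infty \sum_{k \ge 1} \frac{1}{k} \Big(\frac{\gamma_n}{\lambda_n + \lambda}\Big)^k,
$$
and suitable fractional moments in $\lambda$ of this expansion extract quantities equivalent to $\|u\|_{-s}^2$, in the spirit of \cite{Tal}, \cite{KVZ}. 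A crucial unconditional input is the bound $-\lambda_0(u) = \sum_n \gamma_n \le \|\Phi(u)\|_{1/2-s}^2$ (since $\gamma_n = |\zeta_n|^2$ and $\langle n\rangle^{1-2s} \ge 1$ for $1-2s>0$), giving control on the bottom of the spectrum purely in terms of $\|\Phi(u)\|_{1/2-s}$ and thereby allowing one to bypass the circularity in Lemma \ref{basis on Sobolev scale} whose equivalence constants themselves depend on $\|u\|_{-s}$.

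\textbf{(ii).} For surjectivity, given $\zeta \in h^{1/2-s}_+$ I approximate by truncations $\zeta^{(k)}$ lying in the dense subspace $h^{1/2}_+$. By Theorem \ref{main result}, $u^{(k)} := \Phi^{-1}(\zeta^{(k)}) \in L^2_{r,0} \subset H^{-s}_{r,0}$, and by (i) this sequence is bounded in $H^{-s}_{r,0}$. Extracting a weakly convergent subsequence $u^{(k_j)} \rightharpoonup u$, Proposition \ref{extension Phi, part 1}(ii) yields $\Phi(u^{(k_j)}) \rightharpoonup \Phi(u)$, and since $\zeta^{(k_j)} \to \zeta$ we identify $\Phi(u) = \zeta$. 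Sequential weak continuity of $\Phi^{-1}$ follows the same template: given $\zeta^{(k)} \rightharpoonup \zeta$, bound $u^{(k)} := \Phi^{-1}(\zeta^{(k)})$ via (i), extract a subsequence $u^{(k_j)} \rightharpoonup u$, apply weak continuity of $\Phi$ to get $u = \Phi^{-1}(\zeta)$; uniqueness of the limit then forces weak convergence of the full sequence.

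\textbf{(iii) and main obstacle.} In a Hilbert space, weak convergence plus norm convergence yields strong convergence. Given $u^{(k)} \to u$ strongly in $H^{-s}_{r,0}$, (ii) gives $\Phi(u^{(k)}) \rightharpoonup \Phi(u)$; to upgrade, I would use strong continuity of the spectral data of $L_u$ in $u$—the resolvent $u \mapsto (L_u + \lambda)^{-1} : H^{-s}_+ \to H^{1-s}_+$ is continuous by Lemma \ref{restriction of B_u to H^{1-s}}, so analytic perturbation theory gives $\gamma_n(u^{(k)}) \to \gamma_n(u)$ and $f_n(\cdot, u^{(k)}) \to f_n(\cdot, u)$ strongly—combined with the identity underlying (i) to obtain norm convergence $\|\Phi(u^{(k)})\|_{1/2-s} \to \|\Phi(u)\|_{1/2-s}$. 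The argument for $\Phi^{-1}$ is symmetric, using the inverse trace formula. The hard part of the whole plan is (i): producing an unconditional identity tying $\|u\|_{-s}$ to $\Phi(u)$ without relying on the circular bounds of Lemma \ref{basis on Sobolev scale}. Once that identity is in place, (ii) is standard weak-compactness and (iii) is the Radon–Riesz upgrade.
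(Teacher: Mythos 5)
Your part (ii) is correct and coincides with the paper's argument essentially line for line. The genuine problems are in parts (i) and (iii).

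For (i), the paper does not use a perturbation--determinant argument in the style of Talbut or Killip--Visan--Zhang. It expands $\Pi u$ in the eigenbasis $(f_n)_{n\ge 0}$ of $L_u$, uses $\langle \Pi u \mid f_n\rangle = -\lambda_n\langle 1\mid f_n\rangle$ together with the interpolated bound $\|f\|_{-s}^2 \le M_u^{2s}\|R_u^{-2s}f\|^2$ for $R_u=(L_u-\lambda_0+1)^{1/2}$, to obtain
$\tfrac12\|u\|_{-s}^2 \le M_u^{2s}\sum_{n\ge 1}\lambda_n^2(\lambda_n-\lambda_0+1)^{-2s}\gamma_n\kappa_n$;
the circular dependence of $M_u$ on $\|u\|_{-s}$ is then removed by absorbing a small multiple of $\|u\|_{-s}^2$ via Young's inequality, and the bound $\kappa_n\le e^{-\lambda_0}/n$ with $-\lambda_0=\sum_n\gamma_n$ converts the right-hand side into an increasing function of $\|\Phi(u)\|_{1/2-s}$ alone. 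Your sketch replaces all of this by the assertion that ``suitable fractional moments in $\lambda$'' of $-\log\bigl((\lambda_0+\lambda)\mathcal H_\lambda(u)\bigr)$ ``extract quantities equivalent to $\|u\|_{-s}^2$.'' That equivalence is the entire content of the estimate and is not carried out; in particular you never establish the direction in which the $\lambda$-moments \emph{dominate} $\|u\|_{-s}^2$, which is the delicate one (in \cite{Tal} this comparison is itself the main theorem and requires work). So (i) as written is a plan, not a proof, and it is a plan along a different and harder road than the one the paper actually takes.

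For (iii), your Radon--Riesz upgrade requires the norm convergence $\|\Phi(u^{(k)})\|_{1/2-s}\to\|\Phi(u)\|_{1/2-s}$, which you propose to extract from ``the identity underlying (i).'' But (i) only yields a one-sided inequality; there is no exact trace formula for $\|\Phi(u)\|_{1/2-s}$ when $s>0$ (the identity \eqref{trace formula} is special to $s=0$). Pointwise convergence of the $\gamma_n$ and of the eigenfunctions, which perturbation theory does give, is not enough: without uniform smallness of the tails $\sum_{n>N}n^{1-2s}\gamma_n(u^{(k)})$ one cannot conclude convergence in $h^{1/2-s}_+$. Supplying exactly this equi-tail control is what the paper's proof of (iii) does, via the decomposition $u=u_N+u_\perp$, the uniform bounds on the maps $K_{u;t}$ of Lemma \ref{basis on Sobolev scale} over bounded sets, and the compactness criterion of Lemma \ref{rel compact subsets}; its formulation is that $\Phi$ maps relatively compact sets to relatively compact sets, which combined with sequential weak continuity gives continuity. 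Your sketch omits the tail control entirely, and the ``inverse trace formula'' invoked for the continuity of $\Phi^{-1}$ does not exist.
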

\begin{remark}
As mentioned in Remark \ref{weakPhi}, the map
$\Phi: L^2_{r,0} \to h^{1/2}_+$ and its inverse
$\Phi^{-1}: h^{1/2}_+ \to L^2_{r,0}$
are sequentially weakly continuous.
\end{remark}

\noindent
{\em Proof of Proposition \ref{extension Phi, part 2}$(i)$. }
Let $0 < s < 1/2$ and $u \in H^{-s}_{r,0}$.
Recall that by Corollary \ref{definition L_u},
$L_u$ is a self-adjoint operator with domain 
${\rm{dom}}(L_u) \subset H_+$,
has discrete spectrum and is bounded from below. 
Thus $L_u - \lambda_0(u) + 1 \ge 1$ where $\lambda_0(u)$
denotes the smallest eigenvalue of $L_u.$ By the considerations
 in Section \ref{extension of Phi, part 1}
 (cf. Lemma \ref{restriction of B_u to H^{1-s}}),
$L_u$ extends to a bounded operator 
$L_u: H^{1/2}_+ \to H^{-1/2}_+$ and satisfies 
$$
\langle L_u f | f \rangle = 
\langle D f | f \rangle - 
\langle u | f \overline f  \rangle\, , 
\quad \forall f \in H^{1/2}_+ \, .
$$
By Lemma \ref{estimate for T_u f}$(i)$ one has
$ | \langle  u | f \overline f \rangle |
 \le  C_{1,s}^2 \|u\|_{-s} \| f\|_{1/2}^2$ for any $f \in H^{1/2}_+$ and hence
 \begin{align}
 \| f \|^2 & \le 
 \langle ( L_u - \lambda_0(u) + 1) f | f \rangle
 \nonumber \\
 &\le \langle D f | f \rangle + 
 C_{1,s}^2 \|u\|_{-s}\| f\|_{1/2}^2
 + (- \lambda_0(u) + 1) \|f\|^2 \, , \nonumber
 \end{align}
yielding the estimate
$$
 \| f \|^2 \le  
 \langle ( L_u - \lambda_0(u) + 1) f | f \rangle \le
 M_u \|f\|_{1/2}^2
 $$
 where
\begin{equation}\label{def M_u}
 M_u:= C_{1,s}^2 \|u\|_{-s} + (2  - \lambda_0(u)) \,.
\end{equation}
To shorten notation, we will for the remainder of
the proof no longer indicate the dependence of
spectral quantities such as $\lambda_n$ or $\gamma_n$
on $u$ whenever appropriate.
The square root of the operator 
$L_u - \lambda_0 + 1$,
$$
R_u := (L_u - \lambda_0 + 1)^{1/2} : 
H^{1/2}_+ \to H_+\, ,
$$
can then be defined 
in terms of the basis $f_n \equiv f_n(\cdot, u)$,
$n \ge 0$, of eigenfunctions of $L_u$ in a standard way 
as follows: 
By Lemma \ref{basis on Sobolev scale}, any
$f \in H^{1/2}_+$ has an expansion of the form
$f = \sum_{n=0}^\infty \langle f | f_n \rangle f_n$
where $(\langle f | f_n \rangle)_{n \ge 0}$
is a sequence in $h^{1/2}(\N_0)$. $R_u f$ is then defined
as 
$$
R_u f := \sum_{n=0}^\infty 
(\lambda_n - \lambda_0 + 1)^{1/2}
\langle f | f_n \rangle f_n
$$
Since  $(\lambda_n - \lambda_0 + 1)^{1/2} 
\sim \sqrt{n}$ (cf. \eqref{sandwich lambda_n})
one has
$$
\big( (\lambda_n - \lambda_0 + 1)^{1/2}
\langle f | f_n \rangle)^{1/2}\big)_{n \ge 0}
\in \ell^2(\N_0)
$$
implying that $R_u f \in H_+$
(cf. Lemma \ref{basis on Sobolev scale}). 
Note that
$$
\|f\|^2 \le \langle R_u f | R_u f  \rangle
= \langle R_u^2 f | f  \rangle
\le M_u \|f\|^2_{1/2}\, , \quad
\forall f \in H^{1/2}_+\, ,
$$
and that $R_u$ is a positive self-adjoint operator
when viewed as an operator with domain $H^{1/2}_+$, 
acting on $H_+$.
By complex interpolation 
(cf. e.g. \cite[Section 1.4]{Tay}) one then concludes that
for any $0 \le \theta \le 1$
$$
R_u^{\theta} : H^{\theta/2}_+ \to H_+\, , \qquad
\| R_u^{\theta} f\|^2 \le 
M_u^\theta \|f \|^2_{\theta/2} \, , 
\quad \forall f \in H^{\theta/2}_+ \, .
$$
Since by duality, 
$$
R_u^{\theta} : H_+ \to H^{-\theta/2}_+ \, , \qquad
\| R_u^{\theta} g\|_{-\theta /2}^2 \le 
M_u^\theta \|g \|^2 \, , 
\quad \forall g \in H_+ \, ,
$$
one infers, 
using that $R_u^{\theta} : H_+ \to H^{-\theta/2}_+$
is boundedly invertible, that
for any $f \in H^{-\theta /2}_+$,
$$
\| f\|_{-\theta /2}^2 \le 
M_u^\theta \| R_u^{-\theta} f\|^2 \, ,
\qquad
R_u^{-\theta} := (R_u^{\theta})^{-1}\, .
$$
Applying the latter inequality
to $f =\Pi u$ and  $\theta = 2s$
and using that
$\Pi u = \sum_{n =1}^\infty 
\langle \Pi u | f_n\rangle f_n$
and   
$\langle \Pi u | f_n\rangle =
 - \lambda_n  \langle 1 | f_n\rangle$
one sees that
\begin{equation}\label{bound 1 for -s norm of u}
\frac{1}{2} \|u\|^{2}_{-s} = \|\Pi u\|^{2}_{-s}
\le M_u^{2s} \Sigma 
\end{equation}
where
$$
\Sigma:= \sum_{n=1}^\infty \lambda_n^{2}
\big( \lambda_n - \lambda_0+ 1 \big)^{-2s}
 \ |\langle 1\vert f_n\rangle |^2 \ .
 $$
We would like to deduce from 
\eqref{bound 1 for -s norm of u}
an estimate of $\|u\|_{-s}$ in terms of the
 $\gamma_n$'s.
Let us first consider $M_u^{2s}$. 
By \eqref{def M_u} one has
$$
M_u^{2s} = 2^{2s} \max \{ (C_{1,s}^2 \|u\|_{-s})^{2s},  
(2  - \lambda_0(u))^{2s}\} \, ,
$$ 
yielding
\begin{equation}\label{split M_u}
M_u^{2s}
\le (\|u\|_{-s}^2)^s (2 C_{1,s}^2)^{2s}
+ (2(2  - \lambda_0(u)))^{2s} \,.
\end{equation}
Applying Young's inequality 
with $1/p = s$, $1/q = 1 - s$
one obtains
\begin{equation}\label{bound first term of bound 1}
(\|u\|_{-s}^2)^s (2 C_{1,s}^2)^{2s} \Sigma
\le \frac{1}{4} \|u\|_{-s}^2 +
\big( (4 C_{1,s}^2)^{2s} \Sigma \big)^{1/(1-s)} \, ,
\end{equation}
which when combined with \eqref{bound 1 for -s norm of u} 
and \eqref{split M_u}, leads to
$$
\frac{1}{4} \|u\|_{-s}^2 \le 
\big( (4 C_{1,s}^2)^{2s} \Sigma \big)
^{1/(1-s)} 
+ (2(2  - \lambda_0(u)))^{2s} \Sigma \, .
$$
The latter estimate is of the form
\begin{equation}\label{bound 2 of s norm of u}
\|u\|_{-s}^2 \le 
C_{3,s} \Sigma^{1/(1-s)} 
+C_{4,s} (2  - \lambda_0(u))^{2s} \Sigma \, ,
\end{equation}
where $ C_{3,s}, C_{4,s} > 0$ are constants, only
depending on $s$.
Next let us turn to 
$\Sigma = \sum_{n=1}^\infty \lambda_n^{2}
\big( \lambda_n - \lambda_0+ 1 \big)^{-2s}
 \ |\langle 1\vert f_n\rangle |^2$.
Since 
$$
\lambda_n = n - \sum_{k=n+1}^\infty \gamma_k \ ,
\qquad 
\ |\langle 1\vert f_n\rangle |^2 = \gamma_n \kappa_n \, .
$$
and 
\begin{equation}\label{product for kappa}
\kappa_n = \frac{1}{\lambda_n - \lambda_0}
\prod_{p \ne n} (1 - \frac{\gamma_p}{\lambda_p - \lambda_n})\, ,
\end{equation}
the series $\Sigma$ can be expressed in terms of the $\gamma_n$'s. To obtain a bound for $\Sigma$
it remains to estimate the $\kappa_n$'s. Note that
$$
\prod_{p \ne n} 
(1 - \frac{\gamma_p}{\lambda_p - \lambda_n})
\le \prod_{p < n} 
(1 + \frac{\gamma_p}{\lambda_n - \lambda_p})
\le e^{\sum_{p=1}^n \gamma_p} \le e^{- \lambda_0} \, .
$$
Since $ (\lambda_n - \lambda_0)^{-1} = 
(n + \sum_{k=1}^n \gamma_k)^{-1} \le n^{-1}$,
it then follows that
$$
0 < \kappa_n \le  \ \frac{e^{- \lambda_0}}{n}\, , \quad \forall n \ge 1.
$$
Combining the estimates above we get
$$
\Sigma \le e^{-\lambda_0} 
\sum_{n=1}^\infty \lambda_n^{2} n^{-2s -1}  \gamma_n  \ .
$$
By splitting the sum $\Sigma$
into two parts, 
$\Sigma = 
\sum_{ n < - \lambda_0(u) } + 
\sum_{n \ge - \lambda_0(u) }
$ 
and taking into account that $0 \le \lambda_n \le n$ for any 
$n \ge - \lambda_0$ and  $|\lambda_n| \le -\lambda_0 $
for any $1 \le n < - \lambda_0$, one has
$$
\Sigma \le (1 - \lambda_0)^2e^{-\lambda_0} 
\sum_{n = 1}^\infty n^{1 -2s} \gamma_n \, .
$$
Together with the estimate 
\eqref{bound 2 of s norm of u} this shows that
the inverse image  by $\Phi$  
of any bounded subset 
of sequences in $h^{1/2 -s}$ is bounded 
in $H^{-s}_{r,0}$.
\hfill $\square$

\medskip

\noindent
{\em Proof of Proposition \ref{extension Phi, part 2}$(ii)$. }
 First we prove that for any $0 < s < 1/2$,
$\Phi: H^{-s}_{r,0} \to h^{1/2-s}_+$ is onto.
Given $z=(z_n)_{n \ge 1}$ in $h^{1/2-s}_+,$ consider
the sequence $\zeta^{(k)} = (\zeta^{(k)}_n)_{n \ge 1}$,
defined for any $k \ge 1$ by
$$
\zeta^{(k)}_n = z_n\, \,\,\, \forall 1 \le n \le k\, , \qquad
\zeta^{(k)}_n = 0\, \,\,\, \forall n > k\, .
$$
Clearly $\zeta^{(k)} \to z$ strongly in $h^{1/2-s}$.
Since for any $k \ge 1$, $\zeta^{(k)} \in h^{1/2}_+$ 
Theorem \ref{main result} implies that there exists 
a unique element
$u^{(k)}\in L^2_{r,0}$ with $\Phi(u^{(k)})=\zeta^{(k)}$.
By Proposition \ref{extension Phi, part 2}$(i)$, 
$\sup_{k \ge 1}\|u^{(k)} \|_{-s} < \infty$.
Choose a weakly convergent subsequence 
$(u^{(k_j)})_{j \ge 1}$ of $(u^{(k)})_{k \ge 1}$
and denote its weak limit in $H^{-s}_{r,0}$ by $u$.
Since by Proposition \ref{extension Phi, part 1},
$\Phi: H^{-s}_{r,0} \to h^{1/2-s}_+$ is 
sequentially weakly continuous,
$\Phi(u^{(k_j)}) \rightharpoonup \Phi(u)$
weakly in $h^{1/2-s}_+$. On the other hand,
$\Phi(u^{(k_j)}) = \zeta^{(k_j)} \to z$ strongly in $h^{1/2-s}$,
implying that $\Phi(u) = z$.
 This shows that $\Phi$ is onto.\\
 It remains to prove that for any $0 \le s < 1/2$, $\Phi^{-1}$
is sequentially weakly continuous. 
Assume that $(\zeta^{(k)})_{k \ge 1}$ is a sequence in $h^{1/2-s}$,
weakly converging to $\zeta \in h^{1/2-s}$.
Let $u^{(k)}:= \Phi^{-1}(\zeta^{(k)})$.
By Proposition \ref{extension Phi, part 2}$(i)$
(in the case $0 < s < 1/2$) and
Remark \ref{RemarkThm1}$(ii)$ (in the case $s=0$),
 $(u^{(k)})_{k \ge 1}$ is a bounded sequence in
$H^{-s}_{r,0}$ and thus admits a weakly convergent
subsequence $(u^{k_j})_{j \ge 1}$. Denote its
limit in $H^{-s}_{r,0}$ by $u$.
Since by Proposition \ref{extension Phi, part 1}, $\Phi$
is sequentially weakly continuous, $\Phi(u^{k_j}) 
\rightharpoonup \Phi(u)$
weakly in $h^{1/2-s}$. 
On the other hand, by assumption,
$\Phi(u^{k_j}) = \zeta^{(k_j)} \rightharpoonup \zeta$
and hence $u = \Phi^{-1}(\zeta)$
and $u$ is independent
of the chosen subsequence $(u^{k_j})_{j \ge 1}$.
This shows that 
$\Phi^{-1}(\zeta^{(k)}) \rightharpoonup  \Phi^{-1}(\zeta)$
weakly in $H^{-s}_{r,0}$.
\hfill $\square$

\medskip

\noindent
{\em Proof of 
Proposition \ref{extension Phi, part 2}$(iii)$. }
By Proposition \ref{extension Phi, part 1},
$\Phi:  H^{-s}_{r,0} \to h^{1/2-s}_+$ is 
sequentially weakly continuous for any $0 \le s < 1/2$. 
To show that this map is continuous
it then suffices to prove that
the image $\Phi(A)$ of any
relatively compact subset $A$ of $ H^{-s}_{r,0}$ 
 is relatively compact in $h^{1/2-s}_+$.
For any given $\e > 0,$ choose $N \equiv N_\e \ge 1$ and
$R \equiv R_\e > 0$ as in 
Lemma \ref{rel compact subsets}, stated below.
Decompose $u \in A$ as $u = u_N + u_{\bot}$ where
$$
u_N := \sum_{0 <|n| \le N_\e} \widehat u(n) e^{inx} \, ,
\qquad
u_\bot := \sum_{|n| > N_\e} \widehat u(n) e^{inx} \, .
$$
By Lemma \ref{rel compact subsets}, $\|u_N\| < R_\e$
and $\| u_\bot \|_{-s} < \e.$ 
By Lemma \ref{basis on Sobolev scale}, applied
with $\theta = -s$, one has
$$
K_{u; -s}(\Pi u) = K_{u; -s}(\Pi u_N) +
K_{u; -s}(\Pi u_\bot) \in h^{-s}(\N_0)
$$
where  
$K_{u; -s}(\Pi u_N) = K_{u; 0}(\Pi u_N)$
since $\Pi u_N \in H_+$. 
Lemma \ref{basis on Sobolev scale} then implies 
that there exists $C_A > 0$, independent of $u \in A$, 
so that
$$
\| K_{u; 0}(\Pi u_N) \| \le C_A R_\e \, , \qquad
\| K_{u; -s}(\Pi u_\bot) \|_{-s} \le C_A \e \, .
$$
Since $\e > 0$ can be chosen arbitrarily small,
it then follows by Lemma \ref{rel compact subsets}
that $K_{u; -s}(\Pi (A))$ is relatively compact in
$h^{-s}(\N_0)$. Since by definition
$$
\big( K_{u; -s}(\Pi u) \big)_n = 
\langle \Pi u | f_n (\cdot, u) \rangle
\, , \quad \forall n \ge 0\, ,
$$ 
and since by \eqref{formula coeff of u},
$$
\zeta_n(u) \simeq \frac{1}{\sqrt{n}}
\langle \Pi u | f_n (\cdot, u) \rangle   \quad \text{as }\,\,  n \to \infty
$$
uniformly with respect to $u \in A,$ it follows that $\Phi(A)$
is relatively compact in $h^{1/2 -s}_+$.\\
Now let us turn to $\Phi^{-1}$. 
By Proposition \ref{extension Phi, part 2}(ii),
$\Phi^{-1}: h^{1/2-s}_+ \to H^{-s}_{r,0}$
is sequentially weakly continuous. 
To show that this map is continuous
it then suffices to prove that the image $\Phi^{-1}(B)$ of any
relatively compact subset $B$ of $h^{1/2-s}_+$
 is relatively compact in $ H^{-s}_{r,0}$.
By the same arguments as above one sees that 
$\Phi^{-1}: h^{1/2-s}_+ \to H^{-s}_{r,0}$ 
is also continuous.
\hfill $\square$

\smallskip

It remains to state Lemma \ref{rel compact subsets},
used in the proof of  
Proposition \ref{extension Phi, part 2}$(iii)$.
It concerns the well known characterization
of relatively compact subsets of $H^{-s}_{r,0}$
in terms of the Fourier expansion
$u(x) = \sum_{n \ne 0} \widehat u(n) e^{inx}$
of an element $u$ in $H^{-s}_{r,0}$.
\begin{lemma}\label{rel compact subsets}
Let $0 < s < 1/2$ and $A \subset H^{-s}_{r,0}$.
Then $A$ is relatively compact in $H^{-s}_{r,0}$
if and only if for any $\e > 0,$ there exist
$N_\e \ge 1$ and $R_\e > 0$  so that for any $u \in A,$
$$
\big(\sum_{|n| > N_\e} |n|^{-2s} |\widehat u(n)|^2\big)^{1/2} 
< \e
\, , \qquad 
\big( \sum_{0 <|n| \le N_\e} |\widehat u(n)|^2 \big)^{1/2} 
< R_\e \, .
$$
The latter conditions characterize relatively compact subsets of $h^{-s}(\N_0)$.
\end{lemma}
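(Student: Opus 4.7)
The plan is to regard Lemma \ref{rel compact subsets} as a Kolmogorov--Riesz type characterization of compactness in the weighted sequence space associated to $H^{-s}_{r,0}$ via the Fourier transform. Concretely, mapping $u \mapsto (\widehat u(n))_{n\ne 0}$ identifies $H^{-s}_{r,0}$ isometrically with the weighted $\ell^2$ space $h^{-s}(\Z\setminus\{0\})$ (with the standard reality constraint $\widehat u(-n)=\overline{\widehat u(n)}$), so the two stated conditions are exactly the uniform tail smallness and uniform low-frequency boundedness that characterize relative compactness in weighted $\ell^2$. I would prove both implications by a direct covering/diagonal argument; no nontrivial tool is needed beyond total boundedness in finite dimensions.

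For the sufficiency ($\Leftarrow$), fix $\delta>0$ and apply the hypothesis with $\e=\delta/3$ to obtain $N=N_\e$ and $R=R_\e$. The low-frequency truncation $P_N u := \sum_{0<|n|\le N}\widehat u(n)e^{inx}$ maps $A$ into a bounded subset (of size $R$) of the finite-dimensional space spanned by $\{e^{inx}\}_{0<|n|\le N}$, which is therefore totally bounded in the $H^{-s}$-norm. Hence I may choose finitely many $u_1,\dots,u_m\in A$ such that for every $u\in A$ there is $j$ with $\|P_Nu-P_Nu_j\|_{-s}<\e$. Writing $u-u_j=(P_Nu-P_Nu_j)+(u-P_Nu)-(u_j-P_Nu_j)$ and controlling the last two terms by the uniform tail estimate applied to $u$ and $u_j$ respectively yields $\|u-u_j\|_{-s}<3\e=\delta$, so $A$ is totally bounded and therefore relatively compact.

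For the necessity ($\Rightarrow$), assume $A$ is relatively compact, hence bounded, which immediately gives the second condition (one may even take $R_\e$ independent of $\e$). For the tail condition, fix $\e>0$ and cover $A$ by finitely many open balls of radius $\e/2$ centered at $u_1,\dots,u_m\in A$. For each $u_j\in H^{-s}_{r,0}$ the quantity $\big(\sum_{|n|>N}|n|^{-2s}|\widehat{u_j}(n)|^2\big)^{1/2}$ tends to $0$ as $N\to\infty$, so there exists $N_j$ making it less than $\e/2$. Set $N_\e:=\max_j N_j$. Given $u\in A$, pick $j$ with $\|u-u_j\|_{-s}<\e/2$; since the tail operator $Q_Nu:=\sum_{|n|>N}\widehat u(n)e^{inx}$ has operator norm $\le 1$ on $H^{-s}_{r,0}$, one gets $\|Q_{N_\e}u\|_{-s}\le \|Q_{N_\e}u_j\|_{-s}+\|u-u_j\|_{-s}<\e$, which is exactly the tail condition.

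I expect no serious obstacle: the only place where one has to be mildly careful is to make sure that the truncation and tail operators have operator norm $\le 1$ on $H^{-s}_{r,0}$, which is immediate from the diagonal form in Fourier coefficients, and to choose the centers $u_j$ inside $A$ (rather than in its closure) so that both inequalities of the hypothesis apply to them in the sufficiency argument. The final sentence of the lemma, claiming that these conditions also characterize relatively compact subsets of $h^{-s}(\N_0)$, is just the same argument transported through the Fourier isometry and therefore requires no separate proof.
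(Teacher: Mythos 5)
Your proof is correct, and it supplies an argument for a statement the paper deliberately leaves unproved: Lemma \ref{rel compact subsets} is presented there as the ``well known characterization'' of relatively compact sets in $H^{-s}_{r,0}$, with no proof given. Your total-boundedness argument (finite-dimensional compactness of the low-frequency block plus uniform smallness of the tails, and conversely a finite $\e/2$-net combined with the norm-one tail projections) is exactly the standard way to establish it, and all the steps check out, including the identification $\langle n\rangle=|n|$ for $n\ne 0$ that makes the stated tail quantity coincide with the $H^{-s}$-norm of $Q_{N_\e}u$. The only inaccuracy is the parenthetical claim in the necessity direction that $R_\e$ may be taken independent of $\e$: for $s>0$ a relatively compact subset of $H^{-s}_{r,0}$ need not be bounded in $L^2$, so the bound $\bigl(\sum_{0<|n|\le N_\e}|\widehat u(n)|^2\bigr)^{1/2}\le N_\e^{\,s}\sup_{u\in A}\|u\|_{-s}$ genuinely depends on $N_\e$ and hence on $\e$; this is harmless, since the lemma only requires the existence of some $R_\e$ for each $\e$.
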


\smallskip
\noindent
{\em Proof of Theorem \ref{extension Phi}.}
The claimed statements follow from
Proposition \ref{extension Phi, part 1} and
Proposition \ref{extension Phi, part 2}.
In particular, item $(ii)$ of Theorem \ref{extension Phi} 
follows from Proposition \ref{extension Phi, part 2}$(i)$.
\hfill $\square$
 

\section{Solution maps $\mathcal S_0$, $\mathcal S_B$ and $\mathcal S_c$, $\mathcal S_{c,B}$} \label{mathcal S_B}

In this section we provide results related to the solution
map of \eqref{BO}, which will be used to prove 
Theorem \ref{Theorem 1} in the subsequent section.

\subsection{Solution map $\mathcal S_{B}$ and its extension}
First we study the map $\mathcal S_B,$ defined in Section 2
on $h^{1/2}_+$. Recall that by 
\eqref{frequencies in Birkhoff} -- \eqref{frequency map},
the nth frequency of \eqref{BO} is a real valued map defined on $\ell^2_+$ by
$$
\omega_n(\zeta) := n^2 -2\sum_{k=1}^{n} k |\zeta_k|^2
- 2n\sum_{k=n+1}^{\infty} |\zeta_k|^2\, .
$$
For any $0 < s \le 1/2$, the map $\mathcal S_B$ naturally extends to $h^{1/2 -s}_+$, mapping initial data 
$\zeta(0) \in h^{1/2 -s}_+$ to the curve 
\begin{equation}\label{formula for S_B}
\mathcal S_B(\cdot, \zeta(0)): \R \to h^{1/2 -s}_+\ , \  t \mapsto 
\mathcal S_B(t, \zeta(0)):=
\big( \zeta_n(0)e^{it\omega_n(\zeta)} \big)_{n \ge 1} \, .
\end{equation}
We first record the following properties of the frequencies.
\begin{lemma}\label{continuity frequency map} $(i)$ For any $n \ge 1$,
 $\omega_n: \ell^2_+ \to \R$ is continuous and
$$
| \omega_n(\zeta) - n^2 | \le 2 n \|\zeta\|_0^2 \, , 
 \,\,\, \forall \zeta \in  \ell^2_+\, ;
 \quad 
 | \omega_n(\zeta) - n^2 | \le  2\|\zeta\|_{1/2}^2 
\, , \,\,\, \forall \zeta \in  h^{1/2}_+\, .
$$
$(ii)$ For any $0 \le s < 1/2$, \, 
 $\omega_n: h^{1/2 - s}_+ \to \R$
is sequentially weakly continuous.
\end{lemma}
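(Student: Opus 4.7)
My plan is to start from the explicit formula obtained by rearranging the definition,
$$
\omega_n(\zeta) - n^2 = -2 \sum_{k=1}^{\infty} \min(n,k)\,|\zeta_k|^2,
$$
and to read off the two bounds in $(i)$ by applying two trivial upper estimates for $\min(n,k)$. Bounding $\min(n,k) \le n$ gives
$\sum_k \min(n,k)|\zeta_k|^2 \le n\|\zeta\|_0^2$, which yields $|\omega_n(\zeta)-n^2| \le 2n\|\zeta\|_0^2$; bounding $\min(n,k)\le k$ and using $\sum_k k|\zeta_k|^2 = \|\zeta\|_{1/2}^2$ yields $|\omega_n(\zeta)-n^2| \le 2\|\zeta\|_{1/2}^2$. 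In particular, both bounds show that $\omega_n$ is well defined on the indicated space.

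Continuity of $\omega_n : \ell^2_+ \to \R$ for fixed $n$ will then follow from
$$
\omega_n(\zeta)-\omega_n(\eta) = -2\sum_{k=1}^\infty \min(n,k)\bigl(|\zeta_k|^2-|\eta_k|^2\bigr),
$$
the bound $\min(n,k)\le n$, the factorization $|\zeta_k|^2-|\eta_k|^2=(|\zeta_k|-|\eta_k|)(|\zeta_k|+|\eta_k|)$, and Cauchy--Schwarz, giving the local Lipschitz estimate
$$
|\omega_n(\zeta)-\omega_n(\eta)| \le 2n\,\|\zeta-\eta\|_0\bigl(\|\zeta\|_0+\|\eta\|_0\bigr).
$$

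For part $(ii)$, the key point is that the assumption $0 \le s < 1/2$ gives $1-2s > 0$, so the weight $\langle k\rangle^{1-2s}$ tends to $\infty$ and a standard tail/diagonal argument shows that the embedding $h^{1/2-s}_+ \hookrightarrow \ell^2_+$ is compact. Consequently any sequence $\zeta^{(j)} \rightharpoonup \zeta$ weakly in $h^{1/2-s}_+$ converges strongly in $\ell^2_+$, and then the continuity from $(i)$ yields $\omega_n(\zeta^{(j)}) \to \omega_n(\zeta)$.

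The only noteworthy subtlety — and the reason the restriction $s < 1/2$ is sharp for $(ii)$ — is that $\omega_n$ is \emph{not} weakly continuous on $\ell^2_+$ itself (it behaves like $\|\zeta\|_0^2$), so the argument must use genuine compactness of the inclusion into $\ell^2_+$ rather than only weak continuity on the ambient space. At $s=1/2$ the embedding fails to be compact and $(ii)$ breaks down, which is consistent with the threshold of well-posedness discussed in the introduction.
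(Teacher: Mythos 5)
Your proposal is correct and follows essentially the same route as the paper: the two bounds in $(i)$ are read off from the formula $\omega_n(\zeta)=n^2-2\sum_{k\ge1}\min(n,k)|\zeta_k|^2$ via $\min(n,k)\le n$ and $\min(n,k)\le k$, and $(ii)$ is deduced from the compactness of the embedding $h^{1/2-s}_+\hookrightarrow\ell^2_+$ for $0\le s<1/2$ combined with the continuity from $(i)$. The paper leaves these steps implicit, so your explicit Lipschitz estimate and the counterexample-style remark explaining why $s<1/2$ is needed are welcome additions but not a different argument.
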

\begin{proof}
Item $(i)$ follows in a straightforward way from the formula
\eqref{frequencies in Birkhoff} of $\omega_n$. 
Since for any $0 \le s < 1/2$, $h^{1/2 - s}_+$ 
compactly embeds into $\ell^2_+$, item $(ii)$ follows from $(i)$.
\end{proof}
From Lemma \ref{continuity frequency map} one infers  the following properties of $\mathcal S_B$. We leave the easy proof to the reader.
\begin{proposition}\label{S in Birkhoff continuous}
For any $0 \le s \le 1/2,$ the following holds:\\
$(i)$ For any  initial data $\zeta(0) \in h^{1/2-s}_+$, the curve
$$
\R \to h^{1/2-s}_+\, , \, t \mapsto 
\mathcal S_B(t, \zeta(0))
$$ 
is continuous.\\
$(ii)$ For any $T > 0,$
$$
\mathcal S_B: h^{1/2-s}_+ \to C([-T, T], h^{1/2-s}_+), \, 
\zeta(0) \mapsto  \mathcal S_B(\cdot, \zeta(0))\, ,
$$
is continuous. For any $t \in \R$,
$$
\mathcal S^t_B : h^{1/2-s}_+ \to h^{1/2-s}_+, \,
\zeta(0) \mapsto  \mathcal S_B(t, \zeta(0)) \, ,
$$
is a homeomorphism.
\end{proposition}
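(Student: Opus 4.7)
The plan is to establish both items by straightforward estimates, exploiting that for fixed initial data $\zeta(0)$ the frequencies $\omega_n(\zeta(0))$ are constants so that $\mathcal S_B$ is an explicit sequence of rotations whose angular speeds depend continuously on the initial data via Lemma \ref{continuity frequency map}.

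For $(i)$, fix $\zeta(0) \in h^{1/2-s}_+$ and write $\omega_n := \omega_n(\zeta(0))$. Then
\[
\|\mathcal S_B(t,\zeta(0)) - \mathcal S_B(t_0,\zeta(0))\|_{1/2-s}^2 = \sum_{n\ge 1} \langle n\rangle^{1-2s}|\zeta_n(0)|^2\,|e^{it\omega_n}-e^{it_0\omega_n}|^2.
\]
Each summand is dominated by $4\langle n\rangle^{1-2s}|\zeta_n(0)|^2$, which is summable by assumption, and tends to $0$ pointwise as $t\to t_0$. Dominated convergence gives $(i)$.

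For the continuity statement in $(ii)$, suppose $\zeta^{(k)}\to\zeta$ in $h^{1/2-s}_+$ and set $\omega_n^{(k)}:=\omega_n(\zeta^{(k)})$, $\omega_n:=\omega_n(\zeta)$. The telescoping identity
\[
\zeta_n^{(k)}e^{it\omega_n^{(k)}}-\zeta_n e^{it\omega_n} = (\zeta_n^{(k)}-\zeta_n)e^{it\omega_n^{(k)}} + \zeta_n\bigl(e^{it\omega_n^{(k)}}-e^{it\omega_n}\bigr)
\]
controls the first term in the $h^{1/2-s}_+$ norm by $\|\zeta^{(k)}-\zeta\|_{1/2-s}\to 0$, uniformly in $t$. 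For the second term, given $\varepsilon>0$ choose $N$ so that $\sum_{n>N}\langle n\rangle^{1-2s}|\zeta_n|^2<\varepsilon^2$; the tail contributes at most $4\varepsilon^2$ uniformly in $k$ and $t$, while on the head $n\le N$ the elementary bound $|e^{it\omega_n^{(k)}}-e^{it\omega_n}|\le T|\omega_n^{(k)}-\omega_n|$ combined with the continuity of each $\omega_n$ on $\ell^2_+$ (Lemma \ref{continuity frequency map}(i)), together with the continuous embedding $h^{1/2-s}_+\hookrightarrow\ell^2_+$, forces the head to go to $0$ as $k\to\infty$ uniformly for $t\in[-T,T]$. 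This proves continuity of the map $\zeta(0)\mapsto\mathcal S_B(\cdot,\zeta(0))$ into $C([-T,T],h^{1/2-s}_+)$.

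Finally, since the actions $|\zeta_n|^2$ are manifestly conserved by the formula \eqref{formula for S_B}, one has $\omega_n(\mathcal S_B^t(\zeta))=\omega_n(\zeta)$ for every $t$ and $n$, from which the group law $\mathcal S_B^{t_1}\circ\mathcal S_B^{t_2}=\mathcal S_B^{t_1+t_2}$ follows by a direct computation. Specializing to $t_2=-t_1$ yields $\mathcal S_B^t\circ\mathcal S_B^{-t}=\mathrm{id}$, and the continuity of $\mathcal S_B^{\pm t}$ already established shows $\mathcal S_B^t$ is a homeomorphism of $h^{1/2-s}_+$. The only delicate point in the whole argument is the uniformity in $t$ for the second term in $(ii)$, but this is handled cleanly by the $\varepsilon$-$N$ split above; no part of the proof requires the deeper regularity properties of $\Phi$.
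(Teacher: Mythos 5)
Your proof is correct and is precisely the ``easy proof'' that the paper leaves to the reader: everything is deduced from the explicit rotation formula \eqref{formula for S_B} together with Lemma \ref{continuity frequency map}, via dominated convergence for $(i)$ and a head/tail splitting plus the Lipschitz bound $|e^{it\omega_n^{(k)}}-e^{it\omega_n}|\le T|\omega_n^{(k)}-\omega_n|$ for $(ii)$, with the homeomorphism property following from the conservation of the actions and the group law. No gaps; the uniformity in $t$ is handled correctly.
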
 

\subsection{Solution map $\mathcal S_0$ and its extension}
Recall that in Section \ref{Birkhoff map} we introduced the
solution map $\mathcal S_0$ of \eqref{BO}
on the subspace space $L^2_{r,0}$ of $L^2_r$,
consisting of elements in $L^2_r$ with average $0$,
in terms of the Birkhoff map $\Phi$,
\begin{equation}\label{flow map BO}
\mathcal S_0 = \Phi^{-1} \mathcal S_B \Phi\, : \,  
L^2_{r,0} \to C(\R, L^2_{r,0}) \, .
\end{equation}
Theorem \ref{extension Phi} will now be applied to
prove the following result about the extension
of $\mathcal S_0$ to the Sobolev space $H^{-s}_{r,0}$
with $0 < s < 1/2$, consisting of elements in
$H^{-s}_r$ with average zero.
It will be used in Section \ref{Proofs of main results} 
to prove Theorem \ref{Theorem 1}.

\begin{proposition}\label{solution map S_0 for neg}
For any $0 \le s < 1/2$, the following holds:\\
$(i)$ The Benjamin-Ono equation is globally $C^0-$well-posed on $H^{-s}_{r,0}$.\\
$(ii)$ There exists an increasing function
$F_s :\R_{\ge 0} \to \R_{\ge 0} $ so that
$$
\| u \|_{-s} \le F_s\big(\|\Phi(u)\|_{1/2-s}\big)\, , \quad \forall u \in H^{-s}_{r,0} \, .
$$
In particular, 
for any initial data $u(0) \in H^{-s}_{r,0}$,
\begin{equation}\label{a priori bound}
\sup_{t\in \R}\| \mathcal S^t_0(u(0)) \|_{-s}\leq 
F_s\big(\|\Phi(u(0))\|_{1/2-s}\big)\, .
\end{equation}
\end{proposition}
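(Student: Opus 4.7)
\textbf{Proof proposal for Proposition \ref{solution map S_0 for neg}.} The plan is to define $\mathcal{S}_0$ on $H^{-s}_{r,0}$ by transporting the flow $\mathcal{S}_B$ through the extended Birkhoff map, and to verify that the resulting object satisfies Definitions \ref{def solution} and \ref{def well-posedness}. Concretely, for $0<s<1/2$ I would set
\[
\mathcal{S}_0(t,v_0) := \Phi^{-1}\!\bigl(\mathcal{S}_B(t,\Phi(v_0))\bigr), \qquad v_0\in H^{-s}_{r,0},\ t\in\R,
\]
where $\Phi:H^{-s}_{r,0}\to h^{1/2-s}_+$ is the homeomorphism of Theorem \ref{extension Phi} and $\mathcal{S}_B$ is the explicit flow \eqref{formula for S_B}. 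By Proposition \ref{S in Birkhoff continuous}, for every $T>0$ the map $\mathcal{S}_B:h^{1/2-s}_+\to C([-T,T],h^{1/2-s}_+)$ is continuous, so composing with the continuous maps $\Phi$ and $\Phi^{-1}$ yields a continuous solution map $\mathcal{S}_0:H^{-s}_{r,0}\to C(\R,H^{-s}_{r,0})$ (with the topology of uniform convergence on compact time intervals). This is property (S2).

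Next I would check that $\mathcal{S}_0(\cdot,v_0)$ is a global in time solution in the sense of Definition \ref{def solution}. Given a sequence $v_0^{(k)}\in H^\sigma_{r,0}$, $\sigma>3/2$, converging to $v_0$ in $H^{-s}_{r,0}$, each $v_0^{(k)}$ lies in $L^2_{r,0}$, so by \eqref{flow map BO} (established in \cite{GK}) the classical solution satisfies $\mathcal{S}(t,v_0^{(k)})=\Phi^{-1}(\mathcal{S}_B(t,\Phi(v_0^{(k)})))$. By Theorem \ref{extension Phi}(i), $\Phi(v_0^{(k)})\to\Phi(v_0)$ in $h^{1/2-s}_+$; by Proposition \ref{S in Birkhoff continuous}, $\mathcal{S}_B(\cdot,\Phi(v_0^{(k)}))\to\mathcal{S}_B(\cdot,\Phi(v_0))$ in $C([-T,T],h^{1/2-s}_+)$ for every $T>0$; and by continuity of $\Phi^{-1}$ this transports to convergence of $\mathcal{S}(\cdot,v_0^{(k)})$ to $\mathcal{S}_0(\cdot,v_0)$ in $C([-T,T],H^{-s}_{r,0})$. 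Since the limit is independent of the approximating sequence, $\mathcal{S}_0$ is the unique object required by Definition \ref{def solution}, and (i) of the proposition follows. The case $s=0$ is covered verbatim and reproduces Molinet's flow.

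For (ii), the a priori bound follows from the fact that the actions $|\zeta_n|^2$ are conserved by $\mathcal{S}_B$: from the explicit formula \eqref{formula for S_B} one sees $|\mathcal{S}_B(t,\zeta(0))_n|=|\zeta_n(0)|$ for every $n\ge 1$ and $t\in\R$, hence
\[
\|\mathcal{S}_B(t,\Phi(u(0)))\|_{1/2-s}=\|\Phi(u(0))\|_{1/2-s}.
\]
Applying the bound of Theorem \ref{extension Phi}(ii) to $u(t):=\mathcal{S}_0(t,u(0))$ and noting $\Phi(u(t))=\mathcal{S}_B(t,\Phi(u(0)))$ gives
\[
\|\mathcal{S}_0(t,u(0))\|_{-s}\le F_s(\|\Phi(u(t))\|_{1/2-s})=F_s(\|\Phi(u(0))\|_{1/2-s}),
\]
which is \eqref{a priori bound}.

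The main obstacles I expect are essentially already dispatched by the preparatory results: the heart of the matter is the extension theorem for $\Phi$ (Theorem \ref{extension Phi}) together with continuity of $\mathcal{S}_B$ (Proposition \ref{S in Birkhoff continuous}). The one point requiring care is the identification, on the approximating smooth data, of the classical solution with $\Phi^{-1}\mathcal{S}_B\Phi$; this requires invoking \cite{GK} in the form $\mathcal{S}=\mathcal{S}_0$ on $L^2_{r,0}$ and the remark following Theorem \ref{main result} that Molinet's $L^2$ solution coincides with the classical one when the initial data is smooth. Everything else is a formal composition of continuous maps and an invocation of the conservation of actions.
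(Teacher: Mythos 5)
Your proposal is correct and follows essentially the same route as the paper: define $\mathcal S_0=\Phi^{-1}\mathcal S_B\Phi$, obtain continuity by composing Theorem \ref{extension Phi} with Proposition \ref{S in Birkhoff continuous}, and derive the a priori bound from conservation of the $h^{1/2-s}$ norm under $\mathcal S_B$ together with Theorem \ref{extension Phi}(ii). The only difference is that you spell out the verification of Definition \ref{def solution} via approximation by smooth data (using that classical solutions satisfy $\mathcal S=\Phi^{-1}\mathcal S_B\Phi$ on $L^2_{r,0}$ by \cite{GK}), which the paper leaves implicit.
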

\begin{remark}
$(i)$ By the trace formula \eqref{trace formula},
for any $u(0) \in L^2_{r,0}$, 
estimate \eqref{a priori bound} can be improved as follows,
$$
\|u(t) \| = \sqrt{2} \|\Phi(u(0))\|_{1/2} =
\|u(0)\|\, , \quad \forall t \in \R\, .
$$
$(ii)$ We refer to the comments of Theorem \ref{Theorem 1}
in Section \ref{Introduction} for a discussion of the
recent results of Talbut \cite{Tal}, related to \eqref{a priori bound}.
\end{remark}
\begin{proof} Statement $(i)$  follows from the corresponding statements for $S_B$ in Proposition \ref{S in Birkhoff continuous}
and the continuity properties of $\Phi $ and $\Phi^{-1}$ stated in Theorem \ref{extension Phi}. \\
$(ii)$ By Theorem \ref{extension Phi} there exists an increasing
function $F_s : \R_{\ge 0} \to \R_{\ge 0}$ so that for any
$u \in H^{-s}_{r,0}$, 
$\| u \|_{-s} \le F_s(\| \Phi(u) \|_{1/2 -s})$.
Since the norm of $h^{1/2 -s}$
is left invariant by the flow $\mathcal S_B^t$ , 
it follows that 
for any initial data $u(0) \in H^{-s}_{r,0}$, one has
$
\sup_{t\in \R}\| \mathcal S^t(u(0))\|_{-s}\leq 
F_s\big(\|\Phi(u(0))\|_{1/2-s}\big)\, .
$
\end{proof}


\subsection{Solution map $\mathcal S_{c}$}\label{solution map Sc}
Next we introduce the solution map $\mathcal S_c$
where $c$ is a real parameter.
Let $v(t,x)$ be a solution of \eqref{BO} with initial data $v(0) \in H^s_r$ and $s > 3/2$,
satisfying the properties $(S1)$ and $(S2)$ stated in
Section \ref{Introduction}. 
By the uniqueness property in $(S1)$, it then follows that
 \begin{equation}\label{identity of solutions}
 v(t, x) = u(t, x - 2ct) + c\, , \quad  c= \langle v(0) | 1 \rangle
 \end{equation}
 where $u \in C(\R, H^s_{r,0}) \cap C^1(\R, H^{s-2}_{r,0})$
 is the solution of the initial value problem
 \begin{equation}\label{BO with mean zero}
\partial_t u = H\partial^2_x u - \partial_x (u^2)\,, \qquad  u(0) = v(0) - \langle v(0) | 1 \rangle \, ,
\end{equation}
satisfying $(S1)$ and $(S2)$. 
It then follows that
$ w(t,x) := u(t, x -2ct)$ satisfies $w(0) = u(0)$ and
\begin{equation}\label{BO with c}
\partial_t w = H\partial^2_x w - \partial_x (w^2)  + 2c \partial_x w \, .
\end{equation}
By \eqref{identity of solutions},
the solution map of \eqref{BO with c}, denoted by $\mathcal S_c$, 
is related to the solution map $\mathcal S$ of \eqref{BO}
(cf. property $(S2)$ stated in Section \ref{Introduction}) by
\begin{equation}\label{formula with c}
\mathcal S(t, v(0)) = 
\mathcal S_{[v(0)]} \big(t, v(0) - [v(0)] \big) + [v(0)]
\ , \quad [v(0)] := \langle v(0) | 1 \rangle \, .
\end{equation}
In particular, for any $s > 3/2$,
\begin{equation}\label{solution map for BO with c}
\mathcal S_c: H^s_{r,0} \to 
C(\R, H^s_{r, 0}), 
w(0) \mapsto \mathcal S_c(\cdot, w(0) ) 
\end{equation}
is well defined and continuous.
Molinet's results in \cite{Mol} ( cf. also \cite{MP}) imply that the solution map $\mathcal S_c$ continuously 
extends to any
Sobolev space $H^s_{r,0}$ with $0 \le s \le 3/2$. More precisely, for any such $s$,
$\mathcal S_c: H^s_{r,0} \to 
C(\R, H^s_{r,0})$ is continuous
and for any $v_0 \in H^s_{r,0}$, $ \mathcal S_c(t, w_0)$ satisfies equation \eqref{BO}  
in $H^{s-2}_r$.

\subsection{Solution map $\mathcal S_{c, B}$ and its extension}
Arguing as in Section \ref{Birkhoff map}, we use
Theorem \ref{main result}, to express
the solution map $\mathcal S_{c, B}$,
corresponding to the equation \eqref{BO with c}
in Birkhoff coordinates.
Note that \eqref{BO with c} is Hamiltonian, $\partial_t w = \partial_x \nabla \mathcal H_c$, with Hamiltonian
$$
\mathcal H_c : H^s_{r,0} \to \R \, , \qquad  \mathcal H_c (w) =  \mathcal H(w)  + 2c \mathcal H^{(0)}(w)
$$
where by \eqref{CL},  $\mathcal H^{(0)}(w)  = \frac{1}{2\pi} \int_0^{2\pi} \frac{1}{2} w^2 dx$.
Since by Parseval's formula, derived  in \cite[Proposition 3.1]{GK}, 
$$
 \frac{1}{2\pi} \int_0^{2\pi} \frac{1}{2} w^2 dx = \sum_{n=1}^\infty n | \zeta_n |^2
 $$
one has
$$
\mathcal H_{c, B} (\zeta) := \mathcal H_c (\Phi^{-1} (\zeta)) = \mathcal H_{B} (\zeta) +2c  \sum_{n=1}^\infty n | \zeta_n |^2 \, ,
$$
implying that the corresponding frequencies $\omega_{c, n} $, $n \ge 1$, are given by
$$
\omega_{c, n} (\zeta) =   \partial_{|\zeta_n|^2} \mathcal H_{c, B} (\zeta)= \omega_{n} (\zeta) + 2c n \, .
$$
For any $c \in \R$, denote by $\mathcal S_{c, B}$
the solution map of \eqref{BO with c}
when expressed in Birkhoff coordinates,
$$
\mathcal S_{c, B} : h^{1/2}_+ \to C(\R,  h^{1/2}_+ )\, , \zeta(0)  \mapsto  [ t \mapsto  \big(  \zeta_n(0) e^{i t \omega_{c,n}(\zeta(0))}\big)_{n \ge 1} ] \,.
$$
Note that $\omega_{0,n} = \omega_n$ and hence
$\mathcal S_{0, B}  =  \mathcal S_{ B}$. 
Using the same arguments as in the proof of 
Proposition \ref{S in Birkhoff continuous} one obtains the following
\begin{corollary}\label{solution map S_{c,B} for neg}
The statements of Proposition \ref{S in Birkhoff continuous}
continue to hold for $\mathcal S_{c, B}$ with $c \in \R$
arbitrary.
\end{corollary}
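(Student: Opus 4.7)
The plan is to reduce the statement for $\mathcal S_{c,B}$ to the already proved statement for $\mathcal S_B = \mathcal S_{0,B}$. The key observation is that the shifted frequencies factor cleanly: since $\omega_{c,n}(\zeta)=\omega_n(\zeta)+2cn$, the $n$th component of $\mathcal S_{c,B}(t,\zeta(0))$ is
$$\zeta_n(0)\,e^{it\omega_{c,n}(\zeta(0))} = \zeta_n(0)\,e^{it\omega_n(\zeta(0))}\cdot e^{2icnt}.$$
Hence $\mathcal S_{c,B}(t,\zeta) = \tau_c(t)\,\mathcal S_B(t,\zeta)$, where the ``translation operator''
$$\tau_c(t):h^{1/2-s}_+\to h^{1/2-s}_+,\qquad (\tau_c(t)\zeta)_n:=\zeta_n\,e^{2icnt},$$
is for each fixed $t$ a linear isometry (and in fact a unitary of the underlying complex structure), hence a homeomorphism of $h^{1/2-s}_+$ with inverse $\tau_c(-t)$.

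First I would verify directly the analogue of Lemma \ref{continuity frequency map} for $\omega_{c,n}$: for every $n\ge 1$, $\omega_{c,n}:\ell^2_+\to\R$ is continuous because it differs from $\omega_n$ by a constant, and for $0\le s<1/2$ it is sequentially weakly continuous on $h^{1/2-s}_+$ for the same reason. This ensures that the continuity-in-$\zeta$ part of the argument below works exactly as for $\mathcal S_B$.

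Next I would establish item $(i)$ for $\mathcal S_{c,B}$. Fix $\zeta(0)\in h^{1/2-s}_+$ and $t_0\in\R$; then
$$\|\mathcal S_{c,B}(t,\zeta(0))-\mathcal S_{c,B}(t_0,\zeta(0))\|_{1/2-s}^2 = \sum_{n\ge 1}\langle n\rangle^{1-2s}|\zeta_n(0)|^2\,\bigl|e^{it\omega_{c,n}(\zeta(0))}-e^{it_0\omega_{c,n}(\zeta(0))}\bigr|^2.$$
Each summand tends to $0$ as $t\to t_0$ and is dominated by $4\langle n\rangle^{1-2s}|\zeta_n(0)|^2$, which is summable since $\zeta(0)\in h^{1/2-s}_+$; dominated convergence yields continuity of $t\mapsto \mathcal S_{c,B}(t,\zeta(0))$.

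For item $(ii)$, I would use the factorization $\mathcal S_{c,B}(t,\zeta)=\tau_c(t)\mathcal S_B(t,\zeta)$. Given a sequence $\zeta^{(k)}\to\zeta$ in $h^{1/2-s}_+$, Proposition \ref{S in Birkhoff continuous} gives $\mathcal S_B(\cdot,\zeta^{(k)})\to\mathcal S_B(\cdot,\zeta)$ in $C([-T,T],h^{1/2-s}_+)$, and the triangle inequality
$$\|\mathcal S_{c,B}(t,\zeta^{(k)})-\mathcal S_{c,B}(t,\zeta)\|_{1/2-s} \le \|\mathcal S_B(t,\zeta^{(k)})-\mathcal S_B(t,\zeta)\|_{1/2-s}$$
(using that $\tau_c(t)$ is an isometry at each $t$) transfers this convergence to $\mathcal S_{c,B}$, uniformly in $t\in[-T,T]$. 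Finally, for fixed $t\in\R$ the map $\mathcal S_{c,B}^t=\tau_c(t)\circ\mathcal S_B^t$ is a composition of two homeomorphisms of $h^{1/2-s}_+$, hence a homeomorphism, with inverse $(\mathcal S_B^t)^{-1}\circ\tau_c(-t)$. There is no genuine obstacle here: the only mildly delicate point is the uniformity in $t\in[-T,T]$ for the continuity of $\mathcal S_{c,B}$, which is handled by the same dominated convergence argument applied to the tail of the series.
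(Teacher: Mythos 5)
Your proof is correct and matches the paper's approach: the paper's entire proof is the one-line assertion that the arguments for $\mathcal S_B$ carry over, which works precisely because $\omega_{c,n}(\zeta)=\omega_n(\zeta)+2cn$ differs from $\omega_n(\zeta)$ by a $\zeta$-independent constant. Your factorization $\mathcal S_{c,B}(t,\cdot)=\tau_c(t)\circ\mathcal S_B(t,\cdot)$, with $\tau_c(t)$ a linear isometry of $h^{1/2-s}_+$, is simply a clean and fully rigorous way of implementing that reduction.
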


\subsection{Extension of the solution map $\mathcal S_{c}$}
Above, we introduced the
solution map $\mathcal S_c$
on the subspace space $L^2_{r,0}$. One infers from
\eqref{formula with c} that
\begin{equation}\label{flow map BO}
\mathcal S_c = \Phi^{-1} \mathcal S_{c,B} \Phi\, : \,  
L^2_{r,0} \to C(\R, L^2_{r,0}) \, .
\end{equation}
Using the same arguments as in the proof of 
Proposition \ref{solution map S_0 for neg} 
one infers from Corollary \ref{solution map S_{c,B} for neg} 
the following results, concerning the extension
of $\mathcal S_c$ to the Sobolev space $H^{-s}_{r,0}$
with $0 < s < 1/2$.
\begin{corollary}\label{solution map S_c for neg}
The statements of Proposition \ref{solution map S_0 for neg}
continue to hold for $\mathcal S_{c, B}$ with $c \in \R$
arbitrary.
\end{corollary}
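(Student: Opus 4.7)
The plan is to mirror the proof of Proposition \ref{solution map S_0 for neg} step by step, replacing $\mathcal S_0$ by $\mathcal S_c$ and $\mathcal S_B$ by $\mathcal S_{c,B}$. By Theorem \ref{extension Phi}, for any $0 \le s < 1/2$ the Birkhoff map
$\Phi: H^{-s}_{r,0} \to h^{1/2-s}_+$ is a homeomorphism which, together with its inverse, maps bounded sets to bounded sets. Combined with Corollary \ref{solution map S_{c,B} for neg}, which asserts that $\mathcal S_{c,B}$ extends continuously to $h^{1/2-s}_+$ with $\mathcal S^t_{c,B}$ a homeomorphism for every $t \in \R$, I would define the extension of $\mathcal S_c$ to $H^{-s}_{r,0}$ by
$$
\mathcal S_c := \Phi^{-1} \circ \mathcal S_{c,B} \circ \Phi : H^{-s}_{r,0} \to C(\R, H^{-s}_{r,0})\, .
$$
Continuity in the initial data and the group property $\mathcal S^{t_1}_c \circ \mathcal S^{t_2}_c = \mathcal S^{t_1+t_2}_c$ then follow immediately from the corresponding properties on the Birkhoff side.

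To complete the verification of item $(i)$, namely that this extension realizes a global $C^0$-well-posed solution in the sense of Definition \ref{def solution} and Definition \ref{def well-posedness}, I would take a sequence $w_0^{(k)} \in H^\sigma_{r,0}$ with $\sigma > 3/2$ converging to $w_0 \in H^{-s}_{r,0}$. For such smooth initial data the classical solution of \eqref{BO with c} coincides with $\Phi^{-1} \mathcal S_{c,B}(\cdot, \Phi(w_0^{(k)}))$ by the computation of the Hamiltonian $\mathcal H_{c,B}$ and frequencies $\omega_{c,n} = \omega_n + 2cn$ carried out in Subsection \ref{solution map Sc}--\ref{mathcal S_B} (together with Theorem \ref{main result} and the finite gap approximation argument recalled after Remark \ref{RemarkThm1}). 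Continuity of $\Phi$, $\Phi^{-1}$, and $\mathcal S_{c,B}$ then forces the classical solutions to converge to $\Phi^{-1}\mathcal S_{c,B}(\cdot, \Phi(w_0))$ in $C(\R, H^{-s}_{r,0})$, giving both the existence of a solution in the sense of Definition \ref{def solution} and the continuity of the solution map.

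For item $(ii)$, the key observation is that the flow $\mathcal S^t_{c,B}$ on $h^{1/2-s}_+$ multiplies each coordinate $\zeta_n$ by the unimodular factor $e^{it\omega_{c,n}(\zeta)}$, so it preserves every sequence modulus and in particular the $h^{1/2-s}$-norm. Consequently
$$
\|\Phi(\mathcal S_c^t(w_0))\|_{1/2-s} = \|\mathcal S_{c,B}^t(\Phi(w_0))\|_{1/2-s} = \|\Phi(w_0)\|_{1/2-s}\, ,
$$
and the a priori bound $\|\mathcal S_c^t(w_0)\|_{-s} \le F_s(\|\Phi(w_0)\|_{1/2-s})$ follows from Theorem \ref{extension Phi}$(ii)$ with the same function $F_s$ as in Proposition \ref{solution map S_0 for neg}.

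Since every ingredient has already been established in the preceding sections, there is no real obstacle here; the only point that deserves care is checking that for smooth initial data the new extension actually reproduces the classical solution of \eqref{BO with c} (so that the approximation procedure in Definition \ref{def solution} is legitimate), and this is handled by the finite gap approximation together with the explicit Hamiltonian computation $\mathcal H_{c,B} = \mathcal H_B + 2c\sum_{n \ge 1} n|\zeta_n|^2$. Everything else is a transcription of the argument already given for $c=0$.
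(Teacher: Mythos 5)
Your proposal is correct and follows essentially the same route as the paper: the paper likewise defines the extension by $\mathcal S_c = \Phi^{-1}\mathcal S_{c,B}\Phi$, derives continuity from Corollary \ref{solution map S_{c,B} for neg} together with the continuity of $\Phi$ and $\Phi^{-1}$ from Theorem \ref{extension Phi}, and obtains the a priori bound from the invariance of the $h^{1/2-s}$-norm under the flow $\mathcal S^t_{c,B}$. Your extra care in checking that the extension agrees with the classical solution of \eqref{BO with c} for smooth data is a point the paper leaves implicit, but it matches the intended argument.
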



\section[Proofs]{Proofs of the main results} \label{Proofs of main results}

\noindent
{\em Proof of Theorem \ref{Theorem 1}.}
Theorem \ref{Theorem 1} is a straightforward consequence
of Proposition \ref{solution map S_0 for neg} and 
Corollary \ref{solution map S_c for neg}.
\hfill $\square$

\medskip
\noindent
{\em Proof  of Remark \ref{illposedness} (Illposedness of \eqref{BO} in $H^{-s}$ for  $s>\frac 12$).}
Since the general case can be proved by the same arguments
we consider only the case $c = 0$. Let $t \ne 0$. To show
that the solution map $S^t$ cannot be extended to 
$H^{-s}_{r,0}$ for $s > 1/2$, we will study 
one gap solutions.
Without further reference, we use notations and results from \cite[Appendix B]{GK}, where one gap potentials have
been analyzed. Consider the following family
of one gap potentials of average zero, parametrized by
$0 \le q < 1$,
$$
u_{0,q}(x)=2{\rm{Re}} \big( q e^{ix}/(1-qe^{ix})\big)\, , \quad
0 < q < 1\, .
$$
The gaps $\gamma_n(u_{0,q})$, $n \ge 1$, of $u_{0,q}$ can be computed as
$$
\gamma_{1, q} := \gamma_1(u_{0,q}) = q^2/(1-q^2) \, , \qquad
\gamma_n(u_{0,q}) = 0 \, , \quad \forall n \ge 2 \, . 
$$
The 
frequency $\omega_{1, q} := \omega_1 (u_{0,q})$ 
is thus given by (cf. \eqref{frequencies in Birkhoff})
$$
\omega_{1,q} = 1 -2\gamma_{1,q} = \frac{1-3q^2}{1 - q^2}.
$$
The one gap solution, also referred to as travelling wave solution, of the BO equation
with initial data $u_{0,q}$ is then given by
$$
u_q(t,x)=u_{0,q}(x+\omega_{1,q}t)\, , 
\quad \forall t \in \R\, .
$$
Note that for any $s > 1/2$, 
$$
\lim_{q \to 1}u_{0,q} = 
2 {\rm{Re}} (\sum_{k=1}^{\infty} e^{ikx})
= \delta_0 - 1
$$ 
strongly in $H^{-s}_{r,0}$
where $\delta_0$ denotes the periodic Dirac $\delta-$distribution, centered at $0$. 
Since $\omega_{1,q} \to - \infty$ as $q \to 1$,
it follows that for any $t \ne 0,$
$u_q(t, \cdot)$ does {\em not} converge
in the sense of distributions as $q \to 1$.
\hfill $\square$

\medskip
\noindent
{\em Proof of Theorem \ref{Theorem 3}.}
We argue similarly as in the proof of \cite[Theorem 2]{GK}.
Since the case $c \ne 0$ is proved be the same 
arguments we only consider the case $c=0$.
Let $u_0 \in H^{-s}_{r,0}$ with $0\le s < 1/2$
and let $u(t):= \mathcal S_0(t, u_0)$. By formula  
\eqref{formula for S_B}, 
$\zeta(t) := \mathcal S_B(t, \Phi(u_0))$ evolves
on the torus ${\rm{Tor}}(\Phi(u_0))$, defined by \eqref{def tor}.\\
$(i)$ Since ${\rm{Tor}}(\Phi(u_0))$ 
is compact in $h^{1/2 -s}_+$
and $\Phi^{-1}: h^{1/2 -s}_+ \to H^{-s}_{r,0}$
is continuous, $\{ u(t) \ : \ t \in \R \}$
is relatively compact in $H^{-s}_{r,0}$.\\
$(ii)$
In order to prove that $ t \mapsto u(t)$ is almost periodic, we appeal to Bochner's  characterization of such functions (cf. e.g. \cite{LZ}) : a bounded continuous function $f:\R \to X$
with values in a Banach space $X$ is almost periodic if and only if the set $\{ f_\tau, \tau \in \R\}$  of functions defined by 
$f_\tau (t):=f(t+\tau)$
is relatively compact in the space $\mathcal C_b(\R, X)$ of bounded continuous functions on $\R$ with values in $X$.
Since $\Phi: H^{-s}_{r,0} \to h^{1/2 -s}_+ $ 
is a homeomorphism, in the case at hand, it suffices to prove that for every sequence $(\tau_k)_{k\ge 1}$ of real numbers,  the sequence 
$f_{\tau_k}(t):=\Phi (u(t + \tau_k))$, $k\ge 1,$ 
in $C_b(\R,h^{1/2 -s}_+)$ admits a subsequence which converges uniformly in
$C_b(\R,h^{1/2 - s}_+)$. Notice that
$$f_{\tau_k}(t)=\big(\zeta_n(u(0)){\rm e}^{i\omega_n(t+\tau_k)}\big)_{n\ge 1}.$$
 By Cantor's diagonal process
and since the circle is compact, 
there exists a subsequence of $(\tau_k)_{k\ge 1}$,
again denoted by $(\tau_k)_{k\ge 1}$, so that for any $n \ge 1,$ $\lim_{k \to \infty}{\rm e}^{i\omega_n\tau_k}$ exists,
implying that the sequence of functions $f_{\tau_k}$
converges 
uniformly in $C_b(\R,h^{1/2 -s}_+)$.
\hfill $\square$

\medskip

\noindent
{\em Proof of Theorem \ref{Theorem 4}.}
Since the general case can be proved by the same arguments
we consider only the case $c = 0$.
By  \cite[Proposition B.1]{GK}, the travelling wave solutions
of the BO equation on $\T$ coincide with the one gap solutions.
Without further reference, we use notations and results from \cite[Appendix B]{GK}, where one gap potentials have
been analyzed. 
Let $u_0$ be an arbitrary one gap potential. Then
$u_0$ is $C^{\infty}-$smooth and there
exists $N \ge 1$ so that
$\gamma_N(u_0) > 0$ and $\gamma_n(u_0) = 0$ for any
$n \ne N$. 
Furthermore, the orbit of the corresponding one gap solution is given by $\{ u_0(\cdot + \tau) \, : \, \tau \in \R \}$.
Let $0 \le s < 1/2$. 
It is to prove that for any $\e > 0$ there exists $\delta > 0$
so that for any $v(0) \in H^{-s}_{r,0}$ with 
$\|v(0) - u_0 \|_{-s} < \delta$ one has
\begin{equation}\label{epsilon orbit close}
\sup_{t \in \R} \inf_{\tau \in \R}
\| v(t) - u_0( \cdot + \tau) \|_{-s} < \e \, .
\end{equation}
To prove the latter statement, we argue by contradiction.
Assume that there exists $\e > 0$, a sequence 
$(v^{(k)}(0))_{k \ge 1}$ in $H^{-s}_{r,0}$, and
a sequence $(t_k)_{k \ge 1}$  so that
$$
\inf_{\tau \in \R} 
\| v^{(k)}(t_k) - u_0( \cdot + \tau) \|_{-s} \ge \e \, , 
\quad \forall k \ge 1 \, , \qquad
\lim_{k \to \infty } \| v^{(k)}(0) - u_0 \|_{-s} = 0 \, .
$$
Since $A:= \{v^{(k)}(0) \, | \, k \ge 1 \} \cup \{ u_0 \}$
is compact in $H^{-s}_{r,0}$ and $\Phi$ is continuous, 
$\Phi(A)$ is compact in $h^{1/2-s}_+$ and
$$
\lim_{k \to \infty} 
\| \Phi(v^{(k)}(0)) - \Phi(u_0) \|_{1/2 -s} = 0 \, .
$$
It means that
$$
\lim_{k \to \infty} \sum_{n = 1}^{\infty}
n^{1 - 2s} | \zeta_n(v^{(k)}(0)) - \zeta_n(u_0) |^2 = 0 \, .
$$
Note that for any $k \ge 1$,
$$
\zeta_n(v^{(k)}(t_k)) = 
\zeta_n(v^{(k)}(0)) \ e^{i t_k \omega_n(v^{(k)}(0))}\ ,
\quad \forall n \ge 1
$$
and $\zeta_n(u(t_k)) = \zeta_n(u_0) = 0$ 
for any $n \ne N$. Hence
\begin{equation}\label{estimate for n ne N}
\lim_{k \to \infty} \sum_{n \ne  N}
n^{1 - 2s} | \zeta_n(v^{(k)}(t_k)) |^2 = 0\ , 
\end{equation}
and since 
$| \zeta_N(v^{(k)}(t_k))| = | \zeta_N(v^{(k)}(0))|$ 
one has
\begin{equation}\label{estimate for n equal N}
\lim_{k \to \infty} \big| | \zeta_N(v^{(k)}(t_k))|
-  | \zeta_N(u_0) | \big| = 0\ ,
\end{equation}
implying that 
$\sup_{k \ge 1} | \zeta_N(v^{(k)}(t_k))| < \infty$.
It thus follows that the subset $\{ \Phi(v^{(k)}(t_k))  : k \ge 1 \}$
is relatively compact in $h^{1/2 -s}$ and hence 
$\{ v^{(k)}(t_k) :  k \ge 1 \}$ relatively compact 
in $H^{-s}_{r,0}$. Choose a subsequence 
$( v^{(k_{j})}(t_{k_{j}}))_{j \ge 1}$ which
converges in $H^{-s}_{r,0}$ and denote its limit
by $w \in H^{-s}_{r,0}$. By 
\eqref{estimate for n ne N}--\eqref{estimate for n equal N}
one infers that there exists $\theta \in \R$ so that 
$$
\zeta_n(w) =0\ , \quad \forall n \ne N\, , \qquad
\zeta_N(w) = \zeta_{N}(u_0)e^{i \theta}\ .
$$
As a consequence, $w(x) = u_0(x+ \theta / N)$, contradicting
the assumption that 
$\inf_{\tau \in \R} 
\| v^{(k)}(t_k) - u_0( \cdot + \tau) \|_{-s} \ge \e$
for any $k \ge 1$.
\hfill $\square$


\end{document}